\newtheorem{Thm}{Theorem}[section]
\newtheorem{Prop}[Thm]{Proposition}
\newtheorem{Lemma}[Thm]{Lemma}
\theoremstyle{definition}
\newcommand{\be}{\begin{equation}}
\newcommand{\ee}{\end{equation}}
\newfont{\msbm}{msbm10 scaled\magstep1}
\newcommand{\bbz}{\mbox{$\mbox{\msbm Z}$}}
\newcommand{\ep}{\varepsilon}
\newcommand{\wt}{\widetilde}
    \def\Re{{\rm Re \,}}
    \def\Im{{\rm Im \,}}
    \def\bigO{{\cal O}}
    \def\P2n{{\rm P}_{{\rm II}}^{(n)}}
    \newtheorem{theorem}{Theorem}[section]
    \newtheorem{Definition}[theorem]{Definition}
    \newtheorem{Remark}[theorem]{Remark}
    \newenvironment{remark}{\begin{Remark}\rm}{\end{Remark}}
    \newtheorem{Example}[theorem]{Example}
    \newtheorem{Assumptions}[theorem]{Assumptions}
\title{Splitting of a Gap in the Bulk of the Spectrum of Random Matrices} 
\author{Benjamin Fahs  \and Igor Krasovsky }
\date{ }
\begin{document}
\maketitle

\begin{abstract}
We consider the probability of having two intervals (gaps) without eigenvalues 
in the bulk scaling limit of the Gaussian Unitary Ensemble of random matrices.
We describe uniform asymptotics for the transition between 
a single large gap and two large gaps. For the initial stage of the transition, 
we explicitly determine all the asymptotic terms (up to the decreasing ones) of the logarithm of the probability. 
We obtain our results by analyzing double-scaling asymptotics of a Toeplitz determinant whose symbol is supported on two arcs of the unit circle.
\end{abstract}

\section{Introduction}
Let $A$ be the union of $m$ open disjoint intervals on $\mathbb R$, and $K_s$ be the (trace-class) integral operator on $L^2(A,dx)$ given by the kernel
\begin{equation} \label{def sine kernel}
K_s(x,y)=\frac{\sin s(x-y)}{\pi(x-y)}.
\end{equation}
Consider the Fredholm determinant
\begin{equation} \label{def Ps}
P_s(A)=\det(I-K_s)_A.
\end{equation}
For a wide class of random matrix ensembles \cite{Oxford}, in particular for the Gaussian Unitary Ensemble, $P_s(A)$ is the probability that the set $\frac{s}{\pi}A=\{\frac{s}{\pi}x:x\in A\}$ contains no eigenvalues in the bulk scaling limit where the average distance between the eigenvalues is $1$. In this paper, we are interested in the asymptotics of $P_s(A)$ as $s\to \infty$, and we study the transition between a single interval $A_0=(\alpha,\beta)$ to the set $A$ composed of 2 disjoint intervals
\begin{equation}\label{A1A2}
A=A_1\bigcup A_2, \quad A_1=(\alpha_1, \beta_1), \, A_2= (\alpha_2,\beta_2).\end{equation} Such problems have a rich history, of which we mention some relevant results. For the single interval case $A_0=(\alpha,\beta)$, 
\begin{equation} \label{1 gap formula} \begin{aligned}
\log P_s(A_0)&=-\frac{(\beta-\alpha)^2s^2}{8}-\frac{1}{4}\log s-\frac{1}{4}\log\frac{\beta-\alpha}{2}+c_0+\mathcal O(s^{-1}),\\
c_0&=\frac{1}{12}\log 2+3\zeta'(-1),
\end{aligned}\end{equation}
as $s\to \infty$, where $\zeta$ is the Riemann zeta-function. The leading term and logarithmic term in \eqref{1 gap formula} were conjectured by des Cloizeaux and Mehta \cite{CM} in 1973, while the constant term $c_0$ remained undetermined until Dyson \cite{Dyson} conjectured an expression for it in 1976, relying on inverse scattering techniques and the work of Widom \cite{Widom 1} on Toeplitz determinants (see below). The constant $c_0$ became known as the Widom-Dyson constant. The first rigorous confirmation of the leading term in \eqref{1 gap formula} was given by Widom \cite{Widom 2} in 1994. In a landmark paper of 1997, Deift, Its, Zhou \cite{DIZ} were able to confirm the leading term and the logarithmic term, but the proof of the constant $c_0$ continued to defy their techniques. 
Finally, two independent proofs of the constant were later given by Erhardt \cite{Ehrhardt} and the second author \cite{K04}, and a further third proof given in \cite{DIKZ}. The proofs in \cite{K04}, \cite{DIKZ} use Riemann-Hilbert (RH) methods, while \cite{Ehrhardt} uses operator theoretical techniques.

When $A$ is composed of any (fixed) number of intervals, the main term was found and proved by Widom \cite{Widom 3} in 1995, where he was also able to identify the next term in the following result:
\begin{equation}
\frac{d}{ds}\log P_s(A)=sC_1 +C_2(s)+o(1),
\end{equation}
as $s\to \infty$. The constant $C_1$ is explicitly computable, while $C_2(s)$ is an oscillatory function given by a Jacobi inversion problem.
In \cite{DIZ}, which was mentioned above, the authors were also able to find the full asymptotic expansion for the logarithmic derivative of the determinant on any number of intervals and describe the oscillations in terms of $\theta$-functions. Here we present their result when $A$ is composed of 2 intervals as in \eqref{A1A2}:
\begin{equation} \label{2 gap formula}
\frac{d}{ds}\log P_s(A)=-2s G_0 +\frac{d}{ds}\log \theta(sV;\tau)+\mathcal O(s^{-1}).
\end{equation}
More precisely, for any $j=1,2,\dots$, the error term here is of the form
\be
\frac{G_1(s)}{s}+ \frac{G_2(s)}{s^2}+\cdots +\frac{G_j(s)}{s^j}+\mathcal O(s^{-j-1}).
\ee
where $G_j(s)$, $j=1,\dots$, are bounded periodic functions of $s$.
Here
\begin{equation} \label{def theta}
\theta(z)=\theta(z;\tau)=\sum_{m\in \mathbb Z} e^{2\pi i zm+\pi i \tau m^2}
\end{equation}
is the Jacobi Theta-function of the third kind, see e.g. \cite{WW}. The constants (in $s$) $V,\tau,G_0$,
are given in terms of elliptic integrals, and $G_1(s),G_2(s),\dots$ are given in terms of $\theta$-functions.
 Let 
\begin{equation}
r(z)=((z-\alpha_1)(z-\beta_1)(z-\alpha_2)(z-\beta_2))^{1/2},
\end{equation}
with branch cuts on $A$ such that $r(z)>0$ for $z>\beta_2$,
and let
 $q(z)$ be the unique monic polynomial of degree $2$ such that
\begin{equation}\label{q9}
\int_{A_j}\frac{q(z)dz}{r_+(z)}=0,\qquad j=1,2,
\end{equation} 
where $r_+(z)$ is the limit of $r(z+i\epsilon)$ as $\epsilon \to 0_+$ (where the "+" side is chosen merely for definiteness).
 Then $q/r$ has no residue at infinity. 
  Hence as $z \to \infty$, $q/r$ has the form
\begin{equation}\label{G010}
\frac{q(z)}{r(z)}=1+\frac{G_0}{z^2}+\mathcal O(z^{-3}),
\end{equation}
which defines the constant $G_0$ appearing in \eqref{2 gap formula}.

The parameters $V,\tau$ appearing in the arguments of the $\theta$-function in \eqref{2 gap formula} are as follows:
\begin{equation}\label{Vtau11}
V=-\frac{1}{\pi}\int_{\beta_1}^{\alpha_2} \frac{q(x)dx}{r(x)},\qquad 
\tau=i\frac{\int_{\beta_1}^{\alpha_2}\frac{dx}{|r(x)|}}{\int_{\alpha_2}^{\beta_2}\frac{dx}{|r_+(x)|}}.
\end{equation}

Integrating (\ref{2 gap formula}) in $s$ from some large value $s_0$ to $s$, and using the properties of $G_1(s)$,
Deift, Its, and Zhou concluded that
\be \label{2 gap formula int}
\log P_s(A)=-s^2 G_0 +\log \theta(sV;\tau)+ \widehat G_1 \log s + c_1+ \mathcal O(s^{-1}),\qquad
 \widehat G_1=\lim_{x\to\infty}\frac{1}{x}\int_{s_0}^x G_1(t)dt.
\ee
The value of the constant $c_1$ is unknown as there is no point $s_0$ for the lower limit of integration 
where $P_{s_0}(A)$ would be explicitly known.

In this paper we study the transition between the single interval formula \eqref{1 gap formula} and the two-interval formula \eqref{2 gap formula int}. We obtain an explicit expression (up 
to the decreasing terms in the expansion of $\log P_s(A)$) for
the asymptotics in the regime where the length $\ell$ of the interval between the gaps decreases sufficiently fast (slightly faster than $1/s$: see below) as $s\to\infty$. 
On the other hand we show that the asymptotics of \cite{DIZ}, obtained for fixed gaps,
can be extended (with proper adjustments) to the regime when $\ell$ is no longer fixed but 
decreases sufficiently slowly as $s\to\infty$. These two regimes overlap. Thus our analysis provides uniform asymptotics for the whole transition. Note, however, that since the constant
in \eqref{2 gap formula int} is not determined, the expression for the asymptotics
in the second regime is not fully explicit. Obtaining an explicit expression for these asymptotics
and establishing the constant in \eqref{2 gap formula int} is a separate problem and we plan
to address it in future work.

The initial phase of the transition between \eqref{1 gap formula} and \eqref{2 gap formula int}
resembles the birth of a cut --- emergence of an extra
interval of support of the limiting eigenvalue density in a unitary ensemble of random matrices:
asymptotics for the correlation kernel of the eigenvalues in that
case were obtained independently by Bertola \& Lee \cite{BertolaLee}, Claeys \cite{Claeys}, Mo \cite{Mo}. From the technical point of view, our analysis is very different as we are dealing with
so-called hard edges rather than soft edges in \cite{BertolaLee,Claeys,Mo} and in the context of a different model, so both the g-function needed in the analysis and the local parametrix are different. Moreover, the works \cite{BertolaLee,Claeys,Mo}
deal with correlation kernels and not determinants.

Consider the Toeplitz determinant whose symbol $f(z)$ is the characteristic function of a subset $J$ of the unit circle $C$:
\begin{equation}\label{T}
D_n(J)=\det \left(f_{j-k}\right)_{j,k=0}^{n-1}, \quad f_k=\int_{e^{i\theta}\in J} e^{-ik\theta} \frac{d\theta}{2\pi},
\end{equation}
where integration is in the positive direction around the unit circle.
The proofs of the expansion \eqref{1 gap formula} including the constant term $c_0$ in \cite{DIKZ,K04} were based on an analysis of the Toeplitz determinant $D_n(J_2)$ where $J_2$ is an arc of the unit circle
\begin{equation} J=J_2=\left\{ e^{i\theta}|\theta \in \left(-\pi,\theta_2 \right)\cup\left(\theta_1,\pi\right]\right\}.
\end{equation}
The asymptotics of $D_n(J_2)$, as $n\to\infty$, for a fixed arc $J_2$ were found by Widom \cite{Widom 1}. In \cite{DIKZ,K04}, Widom's result was extended to the case of $J_2=J_2^{(n)}$ varying with $n$ such that $|\theta_1-\theta_2|\to 0$ sufficiently slowly. Namely,
\begin{equation} \label{Toeplitz 1 arc}
\log D_n(J_2^{(n)})=n^2 \log \cos \frac{\theta_1-\theta_2}{4}-\frac{1}{4}\log \left(n \sin \frac{\theta_1-\theta_2}{4}\right)+c_0+\mathcal O\left(\frac{1}{n\sin\frac{\theta_1-\theta_2}{4}}\right)
\end{equation}
as $n\to \infty$, uniformly for $\frac{s_0}{n}\leq \frac{\theta_1-\theta_2}{2}\leq \pi-\epsilon$, for $\epsilon >0$ and with $s_0$ sufficiently large.
Asymptotics \eqref{1 gap formula} are obtained from  \eqref{Toeplitz 1 arc} by using the fact that
\begin{equation}
 \lim_{n\to \infty}D_n(J^{(n)}_2)= \det(I-K_s)_{A_0}
\end{equation}
for fixed $s$ and by taking the limit in \eqref{Toeplitz 1 arc} as $n\to \infty$ with $\theta_1=\frac{2s\beta}{n}$ and $\theta_2=\frac{2s\alpha}{n}$, where $\alpha,\beta$ are fixed.  The approach of the present paper is based on an analysis of $D_n(J)$ where $J=J^{(n)}$ is the union of 2 arcs
$J^{(n)}=J_1^{(n)}\cup J_2^{(n)}$, with $J_1^{(n)}\subset C\setminus J_2^{(n)}$ of sufficiently small length in comparison with $C\setminus J_2^{(n)}$ (see Theorem \ref{prop toeplitz} below).
We obtain our results on the sine-kernel determinant by taking the limit $n\to \infty$ of $D_n(J^{(n)})$. However, we believe Theorem \ref{prop toeplitz} below to be of independent interest for a future study of Toeplitz determinants with symbols supported on several arcs.

\subsection{Results}
The kernel \eqref{def sine kernel} is translationally invariant and so we can assume the following form for $A$:
\begin{equation} \label{notation A}
A=(\alpha,-\nu)\bigcup(\nu, \beta), \qquad \alpha<0<\nu<\beta.
\end{equation}
In this paper we provide the asymptotics of $\log P_s(A)$ (including the constant term) in the double scaling limit as $s\to \infty$ while $\nu \to 0$ in such a way that $s\nu\log \nu^{-1} \to 0$, and connect these asymptotics with those of \cite{DIZ}.

Let $\gamma=\frac{1}{8}(\beta^{-1}-\alpha^{-1})$ and 
\begin{equation}
\omega=\frac{s\sqrt{|\alpha \beta|}}{\log(\gamma\nu)^{-1}}>0.
\end{equation}
Clearly, $\omega$ is uniquely represented in the form
\begin{equation}
\omega=k+x, \quad k =0,1,2,\dots , \quad  x \in [-1/2,1/2).
\end{equation}
We note that $\nu$ has the form
\begin{equation}
\nu=\gamma^{-1}e^{-\frac{s\sqrt{|\alpha\beta|}}{k+x}}.
\end{equation}
We prove the following:

 \begin{Thm} \label{Theorem u0s to 0}
 As $s\to \infty$, uniformly for $\nu \in (0, \nu_0)$, where $ s\nu_0\log \nu_0^{-1} \to 0$, 
 \begin{equation} \label{equation 1}
 \begin{aligned}
 \log P_s (A)&=
 \log P_s(A_0) 
 +s\sqrt{|\alpha \beta|}\left(\omega-\frac{x^2}{\omega}\right)+c(k)+\delta_k(x)
\\& \qquad \qquad \qquad\qquad +\mathcal O (\max \{s\nu_0\log \nu_0^{-1},1/\log \nu_0^{-1},s^{-1}\}),\\
c(k)&=\log \left(\frac{2^{2k^2-k}}{\pi^k}\frac{G(k+1)^4}{G(2k+1)}\right),\\
\delta_k(x)&= \log \left(1+2\pi \kappa_{k-1}^2 (\gamma \nu)^{1+2x}\right)
+ \log \left(1+(2\pi \kappa_k^2)^{-1}(\gamma \nu)^{1-2x}
\right),
\end{aligned}\end{equation}  
where $G$ is the Barnes G-function, and where $\kappa_j$ is the leading coefficient of the Legendre polynomial of degree $j$ orthonormal on the interval $[-2,2]$,
given by 
\begin{equation} \label{def Leg coeff}
\kappa_j =4^{-j-1/2}\sqrt{2j+1}\frac{(2j)!}{j!^2},  \quad  j =1,2,\dots, \qquad \kappa_0=1/2, \qquad \kappa_{-1}=0, 
\end{equation} 
and $\log P_s(A_0)$ is given in (\ref{1 gap formula}).

 As $s\to \infty$, uniformly for $\nu\in(\nu_1,\nu_0)$, where
$ s\nu_0\log \nu_0^{-1}\to0,\,\, \frac{s}{\log \nu_1^{-1}}\to \infty$ (i.e., $k\to \infty$), formula \eqref{equation 1} reduces to 
 \begin{equation}\label{equation 2}\begin{aligned}
 \log P_s(A)&=
s^2\left(-\frac{(\beta-\alpha)^2}{8}+\frac{|\alpha \beta|}{\log (\gamma \nu)^{-1}}\right)
-\frac{1}{2}\log s+\frac{1}{4}\log \log (\gamma \nu)^{-1}
-x^2 \log (\gamma \nu)^{-1}
\\& +\log \left(1+ (\gamma\nu)^{1-2|x|}\right)-\frac{1}{4} \log \left(\frac{\beta-\alpha}{2}\sqrt{|\alpha \beta|}\right)
+\frac{1}{6}\log 2+6\zeta'(-1)
\\&+\mathcal O \left(\max \left\{s\nu_0\log \nu_0^{-1},\frac{1}{\log \nu^{-1}_0},\frac{\log \nu^{-1}_1}{s} \right\}\right),
\end{aligned}\end{equation}
where $\zeta$ is the Riemann zeta-function.   
\end{Thm}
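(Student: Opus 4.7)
The plan is to establish an analogous asymptotic expansion for the Toeplitz determinant $D_n(J^{(n)})$ on the unit circle with $J^{(n)}=J_1^{(n)}\cup J_2^{(n)}$ a union of two arcs, the first of which is small compared to the complement of the second (this is the content of Theorem \ref{prop toeplitz}), and then to pass to the limit $n\to\infty$ with endpoints chosen as $\theta_j=(2s/n)\cdot(\text{endpoint of }A)$, using the convergence $D_n(J^{(n)})\to P_s(A)$ that was already employed in \cite{DIKZ,K04} for a single arc. This reduces everything to a nonlinear steepest descent analysis of the Riemann-Hilbert problem (RHP) for the orthogonal polynomials on the unit circle with symbol $\chi_{J^{(n)}}$.

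I would first express $\log D_n(J^{(n)})$ and its derivatives in $\theta_j$ through the leading coefficients and boundary values of the orthonormal polynomials read off from the RHP solution. After the standard transformations (introducing a $g$-function built from the equilibrium measure on the two arcs, and opening lenses in the usual way), the jump matrices become exponentially close to the identity except near the arc endpoints and near the small arc. At each of the four endpoints I would place a standard Airy parametrix, while the global (outer) parametrix would be built out of Szeg\H{o}-type functions adapted to a two-arc support, generalizing the construction of \cite{DIKZ,K04}.

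The principal obstacle is the local parametrix in a fixed-size disk around the small arc $J_1^{(n)}$, where the two Airy disks at its endpoints overlap and the outer parametrix has no control. The model RHP here is analogous to the one in the birth-of-a-cut problem of \cite{BertolaLee,Claeys,Mo}, but on the unit circle it is naturally solved by orthogonal polynomials on the interval $[-2,2]$, i.e., scaled Legendre polynomials, which accounts for the coefficients $\kappa_j$ appearing in \eqref{equation 1}. The matching condition between this local model and the outer parametrix forces the effective polynomial degree to be $\omega=k+x$: the integer $k$ selects the generic Legendre polynomial, while the fractional part $x$ controls a combination of consecutive degrees $k-1,k,k+1$ and, together with the exponentially small off-diagonal entries in the matching, produces the oscillatory contribution $s\sqrt{|\alpha\beta|}(\omega-x^2/\omega)+c(k)+\delta_k(x)$. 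The constant $c(k)$ carries a hidden $k\times k$ partition function of Gaussian type, which is responsible for the Barnes $G$-function ratio.

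Once this parametrix is built and a small-norm RHP argument yields $R=SP^{-1}=I+\mathcal O(\cdot)$ with the error rates claimed (uniform in $\nu,s,n$), integrating the differential identity in $\theta_j$ and pinning the integration constant by comparison with the one-arc formula \eqref{Toeplitz 1 arc} in the limit $|J_1^{(n)}|\to 0$ gives Theorem \ref{prop toeplitz}, and then \eqref{equation 1} in the limit $n\to\infty$; in particular the Widom-Dyson constant $c_0$ enters automatically through the $\log P_s(A_0)$ boundary value. The reduced formula \eqref{equation 2} follows by expanding $c(k)$ and $\delta_k(x)$ as $k\to\infty$: one of the two $\log(1+\cdots)$ terms in $\delta_k(x)$ becomes negligible so that $\delta_k(x)\approx \log(1+(\gamma\nu)^{1-2|x|})$, while Stirling-type asymptotics of $G(k+1)$, $G(2k+1)$ and of $\kappa_k$ rearrange the surviving combinatorial constants into $6\zeta'(-1)+\tfrac{1}{6}\log 2$ and the logarithm $\tfrac{1}{4}\log\log(\gamma\nu)^{-1}$ appearing in \eqref{equation 2}.
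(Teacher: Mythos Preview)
Your overall architecture is the paper's: prove a Toeplitz analogue (Theorem~\ref{prop toeplitz}) via a differential identity and RH steepest descent, then let $n\to\infty$ using $D_n(J^{(n)})\to P_s(A)$, pinning the integration constant with the one--arc result \eqref{Toeplitz 1 arc}; formula~\eqref{equation 2} is then obtained by the Barnes--$G$ and $\kappa_k$ asymptotics you describe. Two of your concrete choices, however, would not work as stated and differ from what the paper actually does.

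First, the endpoint parametrices at $b_1,b_2$ are \emph{Bessel}, not Airy. The symbol is the indicator $\chi_{J}$, so the arc endpoints are hard edges (the weight jumps from $1$ to $0$); the local model RHP there is the Bessel problem of \cite{KMVV}, with matching error $\mathcal O(s^{-1})$, not the Airy problem. An Airy parametrix would not satisfy the required jump and the matching would fail. Likewise, there are not four separate endpoint disks: the two endpoints $a,\bar a$ of the tiny arc $J_1$ sit inside a \emph{single} disk $U_0$ about $z=1$, and the parametrix there is built directly from the Legendre RHP $\Phi(\zeta;k)$ of degree $k$, not from two coalescing Airy solutions.

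Second, the $g$--function is not the standard genus--one equilibrium construction on two arcs. The paper writes $g=g_1+\tfrac{\Omega}{2\pi}h$, where $g_1$ is the one--arc (on $J_2$) $g$--function and $h$ is an auxiliary integral with a logarithmic singularity at $z=1$; the scalar $\Omega>0$ is then \emph{chosen} so that the induced conformal map $\zeta(z)=e^{\frac{2\pi}{\Omega}(g(z)-\frac12\log z)}$ sends $J_1$ onto an interval of length exactly $4$. This normalisation is what makes the local parametrix the Legendre problem on $[-2,2]$ with degree $k=\frac{n\Omega}{2\pi}-x$, and it is the source of the relation $\omega=s\sqrt{|\alpha\beta|}/\log(\gamma\nu)^{-1}$ and of the term $s\sqrt{|\alpha\beta|}(\omega-x^2/\omega)$ after integration of the differential identity (which, incidentally, is taken in $\theta_0$ only). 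A generic two--cut $g$--function would bring in $\theta$--functions already at this stage and would not give the clean Legendre structure.
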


\begin{remark} 
Note that if $k=0$ and $s\to \infty$ while $x\in(0,1/2-\epsilon)$ for $\epsilon>0$, then \eqref{equation 1} shows that $\frac{P_s(A)}{P_s(A_0)}\to 1$.
\end{remark}

\begin{remark}\label{Remark12}
As we show in Section \ref{sec5} 
(Lemma \ref{lemmaconnect}), the Deift-Its-Zhou asymptotics (\ref{2 gap formula}) for 2 fixed gaps 
where we set $\alpha_2=-\beta_1=\nu$, $\alpha_1=\alpha$, $\beta_2=\beta$, can be extended 
(with a worse error term) to the region where $\nu\to 0$ in such a way
that 
\[
s\nu^{1/2+\varepsilon}\to\infty,
\]
for any $\varepsilon>0$.
Clearly, this region overlaps with the region of validity 
\[
s\nu\log(\gamma \nu)^{-1}\to 0
\]
of Theorem \ref{Theorem u0s to 0}. For example, $\nu=s^{-3/2}$ belongs to both regions. In Remark \ref{Remark52} we explicitly show
the coincidence of the main (order $s^2$) asymptotic terms. Full explicit formulas for this matching will be a subject of future work.  
\end{remark}

\begin{remark}
The function $c(k)$ can alternatively be described in terms of the coefficients \eqref{def Leg coeff} of the Legendre polynomials:
\begin{equation} \label{c(k) kappa}
c(k)=-\sum_{j=0}^{k-1}\log 2\pi \kappa_j^2,\quad   k=1,2,\dots, \qquad c(0)=0.
\end{equation}
Formula \eqref{c(k) kappa} shows that $\delta_k(x)+c(k)$ is continuous also at the points $|x|=1/2$.
\end{remark}

\begin{remark} \label{remark kernel}
The rescaled sine process (with expected distance between particles $\pi/s$) is the determinantal point process with the $m$'th correlation function $\rho_m$, for $m=1,2,\dots$, given by
\begin{equation}
\rho_m(x_1,\dots,x_m)=\det(K_s(x_i,x_j))_{i,j=1}^m.
\end{equation}
Consider the rescaled sine process conditioned to have no eigenvalues in $A$. Denote this process by $\mathbb P_A$ and its $m$'th correlation function by $\rho_m^A$. In Section \ref{CorrKernel}, we show that for $x_1, \dots, x_m \in (-1,1)$,
\begin{equation}\label{convergence KLeg}
\nu^m\rho_m^A(\nu x_1,\dots,\nu x_m) \to \det(2 K_{\textrm{Leg}}(2x_i,2 x_j))_{i,j=1}^m
\end{equation}
as $s\to \infty$ and $\nu\to0$ such that $k\in \mathbb N$ and $|x|<1/2$ remain fixed, where
\begin{equation}
K_{\textrm{Leg}}(x,y)=\frac{\kappa_{k-1}}{\kappa_k}\frac{
L_k(x)L_{k-1}(y)-L_k(y)L_{k-1}(x)}{x-y},
\end{equation}
and $L_k$ is the Legendre polynomial of degree $k$, orthonormal on $[-2,2]$:
\begin{equation} \int_{-2}^2L_j(x)L_i(x)dx=\delta_{ij}=\begin{cases} 0& \textrm{for $i\neq j$,}\\ 1& \textrm{for $i=j$.} \end{cases}
\end{equation}
 Recall that for a set $B\subset \mathbb R$ and a point process $\Lambda$ with its $m$-th correlation function denoted $r_m$, we have
\begin{equation} \label{tuples}
\begin{aligned}
&\textrm{Expectation}(\# \, \,\textrm{ordered } m\textrm{-tuples in }B)=
\frac{1}{m!} \int_{B^m}r_m(x_1,\dots,x_m)dx_1\dots dx_m
\\ &=\sum_{j=0}^\infty \binom{m+j}{m} \textrm{Prob}(\#(\Lambda \cap B)=m+j).
\end{aligned}\end{equation}
The process with kernel $K_{\textrm{Leg}}$ is a $k$-point process. Thus we obtain from \eqref{convergence KLeg} and the first equation of \eqref{tuples} that, as $s\to \infty$ and $\nu \to0$ such that $k\in \mathbb N$ and $|x|<1/2$ remain fixed, the expected number of $(k+1)$-tuples of $\mathbb P_A$ on $\left(-\nu,\nu\right)$ converges to $0$, while the expected number of $k$-tuples on the same interval converges to $1$. It follows from the second equation of \eqref{tuples} that
\begin{equation}
\textrm{Prob}\left(\mathbb P_A \textrm{ has $k$ particles in $\left(-\nu,\nu\right)$}\right)\to 1.
\end{equation}

\end{remark}

\bigskip

Thus the asymptotics of $\log P_s(A)$ as $s\to \infty$ depends on the value of $\nu$, and we give an overview of the various scaling limits:
 \begin{itemize}
\item If $\nu=0$, the asymptotics are given by \eqref{1 gap formula}.
\item  If $\nu \to 0$ as $s\to \infty$, such that $s\nu\log \nu^{-1}\to 0$, the asymptotics are given by Theorem \ref{Theorem u0s to 0}.
\item If $\nu\log \nu^{-1}$ is of order $1/s$ or larger, the asymptotics of Theorem \ref{Theorem u0s to 0} breaks down and the transition to the asymptotic formula (\ref{2 gap formula int}) containing $\theta$-functions takes place. 
This is discussed in Section \ref{sec5}.
 \item If $\nu>0$ is fixed, the asymptotics are given by the $\theta$-function regime \eqref{2 gap formula int}.
 \end{itemize}
\bigskip

For Toeplitz determinants, we obtain the following result.
Let $D_n(J)$ be given by (\ref{T}) with $J=J^{(n)}=J_1^{(n)}\cup J_2^{(n)}$ where 
\begin{equation}  \label{def J1J2}
J_1^{(n)}=\left\{e^{i\theta}: \theta\in \left(-\frac{2s\nu}{n},\frac{2s\nu}{n}\right)\right\},\qquad
J_2^{(n)}=\left\{e^{i\theta}: \theta\in \left(-\pi,\frac{2s\alpha}{n}\right)\bigcup\left(\frac{2s\beta}{n},\pi\right]\right\},
\end{equation}
with some $\alpha<0<\nu(s)<\beta$. Then, with the notation of Theorem \ref{Theorem u0s to 0}, we have

\begin{Thm} \label{prop toeplitz}
 As $s,n\to \infty$, uniformly for $\nu \in (0, \nu_0)$, where $ s\nu_0 \log \nu_0^{-1}\to 0$ and \\
 $s^3/n\to 0$,
 \begin{multline}\label{Toeplitz 2 arcs} 
 \log D_n \left(J^{(n)}\right)=
 \log D_n\left(J^{(n)}_2\right)
 +s\sqrt{|\alpha \beta|}\left(\omega-\frac{x^2}{\omega}\right)+c(k)+\delta_k(x)
\\+\mathcal O (\max \{s^3/n,s\nu_0\log \nu_0^{-1},1/\log \nu_0^{-1},s^{-1}\}),\quad \qquad
\end{multline} 
where the expansion of $\log D_n\left(J^{(n)}_2\right)$ is given in (\ref{Toeplitz 1 arc}) with $\theta_1=2s\beta/n$, $\theta_2=2s\alpha/n$.
\end{Thm}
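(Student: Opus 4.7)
The plan is to prove Theorem \ref{prop toeplitz} by a Deift-Zhou nonlinear steepest-descent analysis of the $2 \times 2$ Riemann-Hilbert (RH) problem for polynomials orthogonal on the unit circle with respect to the characteristic function of the two-arc set $J^{(n)}$. The strategy is to extend the one-arc analysis of \cite{DIKZ,K04} underlying \eqref{Toeplitz 1 arc} by inserting a new local parametrix near the short arc $J_1^{(n)}$ built from Legendre polynomials on $[-2,2]$. The connection to the determinant is through the standard identity $D_n(f) = \prod_{j=0}^{n-1}\chi_j^{-2}$ combined with a differential identity for $\frac{d}{d\nu}\log D_n(J^{(n)})$ expressing the derivative in terms of the RH solution at the endpoints $e^{\pm 2is\nu/n}$ of $J_1^{(n)}$. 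Integrating this identity from $\nu = 0$ (where $J^{(n)}=J_2^{(n)}$ and the result is \eqref{Toeplitz 1 arc}) up to the desired value yields \eqref{Toeplitz 2 arcs}.

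For the asymptotic analysis, I would apply the normalization and lens-opening transformations tailored to the symbol supported on $J_2^{(n)}$, producing a small-norm RH problem outside a finite union of disks: two shrinking disks around $e^{2is\alpha/n}$ and $e^{2is\beta/n}$ where Bessel parametrices are placed (exactly as in the one-arc case), and a new disk around $z = 1$ containing the short arc $J_1^{(n)}$. Inside this new disk, after the rescaling $\theta \mapsto \zeta = n\theta/(2s\nu)$, the model RH problem reduces to one for polynomials orthogonal on $[-2,2]$ with a jump whose amplitude is driven by $\log(\gamma \nu)^{-1}$. The matching with the global parametrix built from the Szeg\H{o} function of the one-arc symbol forces the effective degree to be the integer $k$ in $\omega = k + x$, and the solution is an explicit combination of $L_{k-1}$ and $L_k$ weighted by powers of $(\gamma \nu)^{1 \pm 2x}$. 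The contribution of this Legendre parametrix to $\log D_n(J^{(n)})$, computed through the differential identity, yields the phase $s \sqrt{|\alpha \beta|}(\omega - x^2/\omega)$, the constant $c(k)$ from the Legendre normalizations (via the product $\prod_{j=0}^{k-1}(2\pi \kappa_j^2)^{-1}$, giving the Barnes $G$-function expression for $c(k)$), and the correction $\delta_k(x)$ from the subleading contribution that becomes relevant when $\omega$ is near a half-integer.

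With all three parametrix contributions assembled, the ratio RH problem is small-norm with error of order $\mathcal O(\max\{s \nu \log \nu^{-1}, 1/\log \nu^{-1}, 1/(n \sin\tfrac{\theta_1-\theta_2}{4})\})$ uniformly in the stated double-scaling regime. Substituting this into the differential identity and integrating in $\nu$ preserves the error and produces \eqref{Toeplitz 2 arcs}. The additional $\mathcal O(s^3/n)$ term arises from the approximation, inside the Legendre parametrix, of the discrete Toeplitz jump $e^{\pm in\theta}$ by its continuum (sine-kernel) limit on a scale $4s\nu/n$; a Taylor expansion in $s/n$ to the appropriate order inside the local model contributes a cubic $s^3/n$ remainder.

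The main obstacle will be the construction and control of the Legendre local parametrix, and in particular proving its validity uniformly as $\omega$ crosses half-integer values where $k$ jumps. At $|x| = 1/2$ the naive representation in terms of a single $L_k$ becomes singular (the leading Legendre coefficient in the local problem degenerates), and one must show that $c(k) + \delta_k(x)$ extends continuously across this transition, so that a single uniform asymptotic formula holds. This requires a careful merging of two distinct local RH solutions, of effective degrees $k$ and $k{+}1$, through a shared asymptotic regime, tracking both contributions simultaneously in the matching to the global parametrix. A secondary, more technical point will be to verify the uniformity of the Bessel parametrix estimates when the endpoints $e^{2is\alpha/n}, e^{2is\beta/n}$ of the long arc approach the short arc only at logarithmic speed, which is what underlies the $1/\log \nu^{-1}$ error.
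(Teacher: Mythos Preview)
Your high-level strategy is correct and matches the paper: differential identity in the arc-length of $J_1^{(n)}$, Deift--Zhou analysis with Bessel parametrices at the endpoints of $J_2^{(n)}$ and a Legendre-type local parametrix near $z=1$, then integration from $\nu=0$. However, the mechanism you describe for the Legendre parametrix is not what the paper does, and as written it would not produce the clean model problem you need.

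The key point you are missing is the modification of the $g$-function. The paper does \emph{not} use the one-arc $g$-function $g_1$ alone; it sets $g=g_1+\tfrac{\Omega}{2\pi}h$, where $h$ has a \emph{logarithmic} singularity at $z=1$ (see \eqref{def h}) and $\Omega>0$ is fixed by the normalization $\zeta(a)-\zeta(\bar a)=4$ with $\zeta=e^{\frac{2\pi}{\Omega}(g-\tfrac12\log z)}$. This is what makes $e^{n(g-\tfrac12\log z)}=\zeta^{\,n\Omega/(2\pi)}$ exactly, so that after conjugation the jump on $J_1$ becomes $\bigl(\begin{smallmatrix}1&1\\0&1\end{smallmatrix}\bigr)$ with \emph{unit} amplitude and the local model is literally the RH problem for Legendre polynomials of integer degree $k=\tfrac{n\Omega}{2\pi}-x$. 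Your linear rescaling $\zeta=n\theta/(2s\nu)$ and a jump ``whose amplitude is driven by $\log(\gamma\nu)^{-1}$'' does not achieve this: with only $g_1$, the off-diagonal jump entry on $J_1$ carries the exponentially large factor $e^{2ng_1(1)}\sim e^{s\sqrt{|u_1u_2|}}$, and a linear change of variable cannot turn that into a polynomial-growth condition at infinity. The logarithmic piece $h$ is precisely what absorbs this exponential into a power $\zeta^{k}$, and the relation $\Omega\sim \pi s\sqrt{|u_1u_2|}/(n\log u_0^{-1})$ is how $\omega=k+x$ emerges.

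For the uniformity at $|x|=\tfrac12$, the paper again proceeds differently from your ``merging two local solutions'': it inserts an explicit rational correction $(I-X/\zeta)$ in the analytic prefactor $E(z)$ of the local parametrix, and compensates with a nilpotent pole $(I+F/(z-1))$ in the global parametrix (see Section \ref{Local parametrix at 1}). These two factors are chosen so that $E$ stays analytic and the dangerous $u_0^{1-2|x|}\log u_0^{-1}$ terms in $PM^{-1}$ cancel. Finally, your attribution of the $\mathcal O(s^3/n)$ error to a continuum approximation of $e^{\pm in\theta}$ is not how it arises here: the Toeplitz problem is analyzed directly, and the $s/n$ corrections come from expanding $b_j=e^{isu_j/n}$, $g_1(1)$, $h$, etc.\ in $s/n$; after integration in $x$ over $k\sim s/\log u_0^{-1}$ steps these accumulate to $s^3/n$ (cf.\ the $r_2\log u_0^{-1}$ terms in \eqref{F1+F4} and the $\mathcal O(s^3/n)$ in \eqref{integral F0}).
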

We use Theorem \ref{prop toeplitz} to prove Theorem \ref{Theorem u0s to 0}.\\
\noindent{\it Proof of Theorem \ref{Theorem u0s to 0}.}
It is well-known that 
\begin{equation}\label{Fredholm Toeplitz}
 |D_n(J^{(n)})-\det(I-K_s)_A|\to 0
\end{equation}
as $n\to \infty$ for fixed $s$, a fact which we also prove in the appendix for the reader's convenience.
Taking the limit $n\to\infty$ in (\ref{Toeplitz 2 arcs}), we then obtain (\ref{equation 1}). 
To obtain (\ref{equation 2}), we substitute (\ref{1 gap formula}) for $P_s(A_0)$, and note that the standard asymptotics of
the Barnes G-function $G(z+1)$ as $z\to\infty$
\begin{equation}
\log G(z+1)=\frac{z^2}{2}\log z-\frac{3}{4}z^2+\frac{z}{2}\log 2\pi-\frac{1}{12} \log z+\zeta'(-1)+\mathcal O(z^{-2}),
\end{equation} 
imply that as $k\to \infty$, 
 \begin{equation}
c(k)=-\frac{1}{4}\log k+\frac{1}{12}\log 2+3\zeta'(-1)+\mathcal O(1/k^2).
\end{equation}
Furthermore, we note that $2\pi \kappa_k^2=1+\bigO (k^{-1})$ as $k\to \infty$.
$\Box$

\subsection{Outline of the proof of Theorem \ref{prop toeplitz}}
It remains to prove Theorem \ref{prop toeplitz}. 
\begin{figure}
\begin{tikzpicture}
\draw [decoration={markings, mark=at position 0 with {\arrow[thick]{>}}},
        postaction={decorate},decoration={markings, mark=at position 0.5 with {\arrow[thick]{>}}},
        postaction={decorate},dashed, gray] (0,0) circle[radius=2]; 
\draw[thick](2,0) arc [radius=2,start angle=0, end angle=10];
\draw[thick](2,0) arc [radius=2,start angle=360, end angle=350];
\draw[decoration={markings, mark=at position 0.99 with {\coordinate(b1);}},
        postaction={decorate},thick](-2,0) arc [radius=2,start angle=180, end angle=40];
\draw[decoration={markings, mark=at position 0.99 with {\coordinate(b2);}},
        postaction={decorate},thick](-2,0) arc [radius=2,start angle=180, end angle=330];
\node [left] at (2,0) {$J_1$};
\node [left] at (-2,0) {$J_2$};
\node [left] at (1.8,0.8) {$\Sigma_1$};
\node[left] at (1.8,-0.7){$\Sigma_2$};
\node [right] at (2,0.5) {$a=e^{i\theta_0}$};
\node [right] at (2,-0.5) {$\bar{a}=e^{-i\theta_0}$};
\node [right] at (b1) {$b_1=e^{i\theta_1}$};
\node [right] at (b2) {$b_2=e^{i\theta_2}$};
\end{tikzpicture}
\caption{Interval $J$.}
\label{interval J}
\end{figure}

Let $-\pi<\theta_2<0<\theta_0<\theta_1<\pi$ and define $J=J_1\cup J_2$ where $J_1, \, J_2$ are as in Figure \ref{interval J}:
\begin{equation}\label{defJ}
J_1=J_1(\theta_0)= \{e^{i\theta}|\theta \in( -\theta_0,\theta_0)\}, \quad J_2=\{ e^{i\theta}|\theta \in (\theta_1,\pi]\cup(-\pi,\theta_2)\}.\end{equation} 
We denote the complement of $J$ as $\Sigma=\Sigma_1\cup \Sigma_2$ where
\begin{equation}
\Sigma_1=\{ e^{i\theta}|\theta \in (\theta_0,\theta_1)\}, 
\quad \Sigma_2= \{e^{i\theta}|\theta \in( \theta_2,-\theta_0)\}.
\end{equation}
It follows from the integral representation for Toeplitz determinants (see \eqref{Toeplitz eigs} in the Appendix) that $D_j(J)>0$ for all $j\in \mathbb N$. 
Consider the polynomials $\phi_j$ for $j=0,1,2,\dots$ given by
\begin{equation}\begin{aligned} \label{def orthog}
\phi_0(z)&=\frac{1}{\sqrt{D_1(J)}},\\
\phi_j(z)&=\frac{1}{\sqrt{D_j(J)D_{j+1}(J)}}\det 
{\small \begin{pmatrix} f_0 & f_{-1} & \dots &f_{-j+1} &f_{-j}\\
f_1& f_0 &\dots&f_{-j+2}&  f_{-j+1}\\
&&\ddots &\\
f_{j-1}&f_{j-2}& \dots& f_0 &f_{-1}\\
1&z&\dots &z^{j-1}&z^j \end{pmatrix}}=\chi_jz^j+\dots,\quad j>0,
\end{aligned}\end{equation}
where the leading coefficient $\chi_j$ is given by
\begin{equation}
\chi_j=\sqrt{\frac{D_j(J)}{D_{j+1}(J)}}, \quad j=0,1,2,\dots,
\end{equation}
and we set $D_0(J)=1$. The polynomials $\phi_j$ are orthonormal with weight $1$ on $J$:
\begin{equation} \label{property orthog}
\int_{J} \phi_k(z)\overline{ \phi_j(z)} \frac{d\theta}{2\pi}= \delta_{jk}, \quad z =e^{i\theta}, \quad j,k=0,1,2,\dots.
\end{equation}

Define a $2\times 2$ matrix $Y(z)=Y_n(z)$ in terms of these orthogonal polynomials as follows:
\begin{equation}\label{Soln Y}
Y(z)=\begin{pmatrix}
\chi_n^{-1}\phi_n(z)&\chi_n^{-1} \int_{J} \frac{\phi_n(\zeta)}{\zeta-z}\frac{d\zeta}{2\pi i \zeta^n} \\
-\chi_{n-1}z^{n-1} \overline{\phi_{n-1}}(z^{-1})& -\chi _{n-1} \int_{J} \frac{\overline{\phi_{n-1}}(\zeta^{-1})}{\zeta-z} \frac{d\zeta}{2\pi i \zeta}
\end{pmatrix}.
\end{equation}
Then $Y$ is the unique solution of the following Riemann-Hilbert (RH) Problem
\begin{itemize}
\item[(a)] $Y:\mathbb C \setminus J \to \mathbb C^{2\times 2}$ is analytic;
\item[(b)] $Y$ possesses $L^2$ boundary values $Y_+$ and $Y_-$ on the $+$ and $-$ side of $J$, respectively, related by the condition:
\begin{equation} \nonumber Y_+(z)=Y_-(z)\begin{pmatrix} 1&z^{-n}\\0&1 \end{pmatrix}$ for $z\in J; \end{equation}
\item[(c)] $Y(z)=(I+\mathcal{O}(1/z))\begin{pmatrix}z^n&0\\0&z^{-n}\end{pmatrix}$ as $z\to \infty$.
\end{itemize}
The fact that orthogonal polynomials satisfy a RH problem was first observed for polynomials orthogonal on the real line by Fokas, Its, Kitaev \cite{FIK}, and extended to polynomials orthogonal on the unit circle by Baik, Deift, Johansson \cite{BDJ}.  
The RH problem provides an efficient tool, via the Deift-Zhou steepest descent method, for the asymptotic analysis of the polynomials, see e.g. \cite{Deift}.

In Section \ref{section diffid} we express the logarithmic derivative of the Toeplitz determinant $\frac{d}{d\theta_0}\log D_n(J)$ in terms of the polynomials $\phi_{n}$ and $\phi_{n-1}$. These are, in turn, expressed in terms of $Y_n$. In Sections \ref{section analysis RH} and \ref{section asymptotic} we analyse the RH problem for $Y_n$ as $n\to \infty$ in a double scaling limit where $J$ depends on $n$ such that $\theta_j=\frac{s}{n}u_j$ for $j=0,1,2$; where $s\to \infty$ such that
 $s^3/n \to0$; where $u_0\to 0$ such that $s u_0\log u_0^{-1} \to 0$, while $u_1$ and $u_2$ remain fixed. As a result, we obtain the asymptotics of $Y_n$. Substituting these into the differential identity for $\frac{d}{d\theta_0}\log D_n(J)$, and integrating with respect to $\theta_0$, we obtain the asymptotics of $D_n(J)$, where $u_1=2\beta,\,\, u_2=2\alpha$, and $u_0=u_0(\nu)$ is a function of $\nu$, which proves Theorem \ref{prop toeplitz}.

\section{Differential Identity} \label{section diffid}

We will now obtain the following:

\begin{Prop} \label{Prop Diffid} (Differential identity) Let $a=e^{i\theta_0}$. The Toeplitz determinant $D_n(J)$ satisfies
\begin{equation}\label{diff id}
\frac{\partial}{\partial \theta_0}\log D_n(J)=-\frac{1}{2\pi}[F(\overline a)+F(a)],
\end{equation}
where 
\begin{equation} \label{F(z)} F(z)=n\chi_n^2|Y_{11}(z)|^2-2 \chi_n^2 \Re\left(z\overline{Y_{11}(z)}\frac{d}{dz}Y_{11}(z) \right),
\end{equation}
and $J$ was given in \eqref{defJ}.
\end{Prop}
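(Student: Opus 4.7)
The plan is to write $\log D_n(J)$ as a telescoping sum of orthogonal polynomial norms and differentiate under the orthogonality. From $\chi_j=\sqrt{D_j(J)/D_{j+1}(J)}$ (with $D_0=1$) and the monic polynomials $p_j=\phi_j/\chi_j$ with squared norms $h_j=1/\chi_j^2=D_{j+1}(J)/D_j(J)$, I have
$$\log D_n(J)=\sum_{j=0}^{n-1}\log h_j,\qquad h_j=\int_J|p_j(e^{i\theta})|^2\,\frac{d\theta}{2\pi}.$$
Only the endpoints of $J_1=(-\theta_0,\theta_0)$ move with $\theta_0$, and the interior contribution $2\,\Re\int_J\overline{p_j}\,\partial_{\theta_0}p_j\,d\theta/(2\pi)$ vanishes: $p_j$ is monic of degree $j$, so $\partial_{\theta_0}p_j$ has degree $\le j-1$, while $p_j$ is orthogonal to $z^l$ for $0\le l\le j-1$. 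Hence only the boundary terms survive and, after dividing by $h_j$ and summing,
$$\frac{d\log D_n(J)}{d\theta_0}=\frac{1}{2\pi}\bigl[K_n(a,a)+K_n(\bar a,\bar a)\bigr],\qquad K_n(z,w)=\sum_{j=0}^{n-1}\phi_j(z)\overline{\phi_j(w)}.$$

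Next, I evaluate $K_n(z,z)$ on the unit circle as the confluent limit of the Christoffel--Darboux identity
$$K_n(z,w)=\frac{\phi_n^*(z)\overline{\phi_n^*(w)}-\phi_n(z)\overline{\phi_n(w)}}{1-\bar w z},\qquad \phi_n^*(z)=z^n\overline{\phi_n(1/\bar z)}.$$
For $|z|=1$ the reverse polynomial satisfies $\phi_n^*(z)=z^n\overline{\phi_n(z)}$, so both numerator and denominator vanish at $w=z$. Parameterizing $z=e^{i\theta}$, $w=e^{i\phi}$ and expanding to first order in $(\phi-\theta)$ using $1-\bar wz=i(\phi-\theta)+O((\phi-\theta)^2)$, $\phi_n(w)=\phi_n(z)+iz\phi_n'(z)(\phi-\theta)+O((\phi-\theta)^2)$, and the identity $\overline{\phi_n^*(w)}\phi_n^*(z)=e^{in(\theta-\phi)}\overline{\phi_n(z)}\phi_n(w)$, I collect first-order terms and divide out the common factor $i(\phi-\theta)$ to obtain
$$K_n(z,z)=-n|\phi_n(z)|^2+2\,\Re\!\bigl(z\,\overline{\phi_n(z)}\,\phi_n'(z)\bigr),\qquad|z|=1.$$

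Finally, from $Y_{11}(z)=\chi_n^{-1}\phi_n(z)$ one has $\chi_n^2|Y_{11}(z)|^2=|\phi_n(z)|^2$ and $\chi_n^2\,\overline{Y_{11}(z)}\,Y_{11}'(z)=\overline{\phi_n(z)}\phi_n'(z)$, so $K_n(z,z)=-F(z)$, which combined with the boundary-term formula yields \eqref{diff id}. I expect the only delicate step to be the confluent expansion at $|z|=1$: since $\bar z$ is not holomorphic in $z$, one must pass through the angular parameter $\phi$ and carefully track the factors of $i$ appearing in both the numerator (through $e^{in(\theta-\phi)}$ and $iz\phi_n'(z)$) and the denominator, so that they cancel to produce the real expression for $F$. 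Everything else is a routine consequence of the definitions of $\phi_j$, $\chi_j$, and $Y$.
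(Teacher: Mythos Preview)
Your proof is correct and follows essentially the same strategy as the paper: reduce $\partial_{\theta_0}\log D_n$ to the Christoffel--Darboux kernel evaluated at the moving endpoints $a,\bar a$, then identify that diagonal kernel with $-F$. The only organizational difference is that you work with the monic $p_j$, so the interior contribution to $\partial_{\theta_0}h_j$ vanishes immediately and the boundary terms appear at once, whereas the paper works with the orthonormal $\phi_j$, first obtains the interior integral $\frac{1}{2\pi}\int_J\partial_{\theta_0}F\,d\theta$, and then converts it to the boundary values $F(a)+F(\bar a)$ via the Leibniz identity $\partial_{\theta_0}\!\int_J F\,d\theta=0$.
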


\begin{proof}
 From the definition of the orthogonal polynomials it is clear that
\begin{align}\label{Toeplitz and coeffs}
D_n(J)=&\prod_{j=0}^{n-1} \chi_j^{-2}.\end{align}
The orthogonality conditions imply that
\begin{align} \frac{1}{2\pi} \int_J \frac{\partial \phi_j(z)}{\partial \theta_0} \overline{\phi_j(z)}d\theta =& \frac{1}{2\pi} \int_J \frac{\partial \chi_j}{\partial \theta_0} (z^j +\textrm{poly of deg $j-1$}) \overline{\phi_j(z)}d \theta
=\frac{1}{\chi_j} \frac{\partial \chi_j}{\partial \theta_0} ,
\end{align}
and similarly,
\begin{equation}\label{diff tricks}
 \frac{1}{2\pi} \int_J \phi_j(z)\frac{\partial \overline{\phi_j(z)}}{\partial \theta_0}d\theta=\frac{1}{\chi_j} \frac{\partial \chi_j}{\partial \theta_0} .
\end{equation}
By \eqref{Toeplitz and coeffs}--\eqref{diff tricks} we obtain:
\begin{equation}\label{Dnsumphij}
\begin{aligned}
  \frac{\partial }{\partial \theta_0} \log (D_n(J))=&-2\sum_{j=0}^{n-1} \frac{\partial \chi_j}{\partial \theta_0} /\chi_j\\
=&-\frac{1}{2\pi} \int_J \frac{\partial}{\partial\theta_0} \left( \sum_{j=0}^{n-1} |\phi_j(z)|^2 \right)d\theta.
\end{aligned}
\end{equation}
On the other hand, one can express $F$ given in \eqref{F(z)} in terms of the orthogonal polynomials:
\begin{equation}F(z)=n|\phi_n(z)|^2 -2\Re \left(z\overline{\phi_n(z)}\frac{d}{dz}\phi_n(z)\right).\end{equation}
Now the Christoffel-Darboux formula for orthogonal polynomials gives
\begin{equation} \label{CD formula}
\sum_{k=0}^{n-1}| \phi_k(z)|^2=-F(z) \quad \textrm{for } z \in C,
\end{equation}
(see eg. \cite{K04}), and hence \eqref{Dnsumphij} can be written as
\begin{equation}
\frac{\partial}{\partial \theta_0}\log D_n(J)=  \frac{1}{2\pi} \int_J \frac{\partial}{\partial \theta_0} \left(F(z)\right) d\theta.
\end{equation}
Since, by \eqref{CD formula} and orthogonality, $\int_JF(z)\frac{d\theta}{2\pi}=n$, we obtain
\begin{align}
0=\frac{\partial}{\partial \theta_0} \left(\int_{J} F(z) d\theta\right)=&
F(\overline a)+F(a)+\int_{J} \frac{\partial}{\partial \theta_0}F(z)d\theta,
\end{align}
 upon which Proposititon \ref{Prop Diffid} follows immediately.\end{proof}

\section{Analysis of Riemann-Hilbert problem} \label{section analysis RH}
We start by setting $\theta_0=0$ so that $J_1$ is a point, and then let $J_1$ develop into an arc. Throughout the rest of the paper we use the notation
\begin{equation}a=e^{i\theta_0}=e^{iu_0s/n}, \quad b_1=e^{i\theta_1}=e^{iu_1s/n}, \quad
b_2=e^{i\theta_2}=e^{iu_2s/n}
.\end{equation}
 We let $s,n \to \infty$ such that $s^3/n\to 0$, and let $u_0 \to 0$ as $s\to \infty$ such that $su_0 \log u_0^{-1} \to 0$, while $u_2<0<u_1$ remain constant. Denote $\Sigma^o=\Sigma_1^o\bigcup \Sigma_2^o$ where
\begin{align}
 \Sigma_1^o=\{e^{i\theta}| 0<\theta<\theta_1\},\quad
 \Sigma_2^o=\{e^{i\theta}| \theta_2<\theta<0\}.
\end{align}
Let $g_1$ be the function:
\begin{equation}\begin{aligned} \label{g_1}
g_1(z) &=\log \left( \frac{1}{b_1^{1/2}+b_2^{1/2}} \left( z+ \left(b_1b_2\right)^{1/2}+((z-b_1)(z-b_2))^{1/2}\right)\right),
\end{aligned} \end{equation}
where the square root has branch cut on $J_2$ and is positive as $z\to +\infty$, and the logarithm $\log x$ has a branch cut for $x<0$ and is positive for $x>1$. At infinity,
\begin{equation}
g_1(z)= \log z -\log \frac{\sqrt{b_1}+\sqrt{b_2}}{2}+o(1) \quad \textrm{as }z\to \infty.
\end{equation}
The boundary values of the function $g_1$ satisfy
\begin{equation}\label{jump g1}
g_{1,+}(z)+g_{1,-}(z)=\log z, \quad \textrm{for } z\in J_2,
\end{equation}
 and at $b_1,b_2$ we have
\begin{equation} \label{value gb1}
g_1(b_1)=\frac{1}{2}\log b_1, \quad g_1(b_2)=\frac{1}{2}\log b_2.
\end{equation}
Alternatively, for $z=e^{i\theta}\in \Sigma^{o}$ we can write $g_1$ in the following form:
\begin{equation} \label{g1 Sigma}
\exp\left(g_1\left(e^{i\theta}\right)\right)=e^{i\theta/2}\frac{\cos \frac{1}{2}\left(\theta-\frac{\theta_1+\theta_2}{2}\right)+\sqrt{| \sin \frac{\theta-\theta_1}{2}\sin \frac{\theta-\theta_2}{2}}|}{\cos \frac{1}{4} (\theta_1-\theta_2)}.
\end{equation}
On the $+$ and $-$ side of $J_2$, we have
\begin{equation} \label{g1 J2}
\exp\left((g_1)_{\pm}\left(e^{i\theta}\right)\right)=e^{i\theta/2}\frac{\cos \frac{1}{2}\left(\theta-\frac{\theta_1+\theta_2}{2}\right)\mp i\sqrt{| \sin \frac{\theta-\theta_1}{2}\sin \frac{\theta-\theta_2}{2}}|}{\cos \frac{1}{4} (\theta_1-\theta_2)},
\end{equation}
from which it follows that $e^{g_1}$ maps the $+$ side of $J_2$ to $\Sigma^o$ and the $-$ side to $J_2$, and that $e^{g_1}$ maps $\mathbb C \setminus J_2$ to the outside of the unit disc.

Set 
\begin{equation} \label{def w,Bj} w=\frac{n}{s}(z-1), \quad B_j=\frac{n}{s}(1-b_j) \end{equation} 
for $j=1,2$. Note that $B_1$ and $B_2$ remain bounded as $s/n\to 0$. Then 
\begin{equation}\begin{aligned}
g_1(z)&=g_1(1)+\log\left(1+\frac{s}{n}\frac{\left(1-\frac{s}{n}B_1\right)^{1/2}+\left(1-\frac{s}{n}B_2\right)^{1/2}}{e^{g_1(1)}}wH(w) \right),\\
H(w)&=\frac{1}{w}\left(w+((w+B_1)(w+B_2))^{1/2}-(B_1B_2)^{1/2}\right),
\end{aligned}\end{equation} 
where $H(w)$ is analytic in $w$ at the point $0$.
Thus $g_1$ has the following expansion in $w$ at the point $w=0$
\begin{equation}\label{expansion g1a}
g_1(z)=g_1(1)+\frac{s}{n}(c_1w+\mathcal O(w^2)),
\end{equation}
where
\begin{equation}
 \label{expansion g1} \begin{aligned}g_1(1)&=\log \left( \frac{\cos (\theta_1+\theta_2)/4+\sqrt{|\sin \theta_1/2| |\sin \theta_2/2|}}{\cos (\theta_1-\theta_2)/4} \right)=\frac{s\sqrt{|u_1u_2|}}{2n}\left(1+\mathcal O\left(\frac{s}{n}\right)\right)\\
c_1&=\frac{1}{\sqrt{(1-b_1)(1-b_2)}}\left(1-\frac{\sqrt{b_1}+\sqrt{b_2}}{2e^{g_1(1)}}\right)=\left(\frac{1}{2}+\frac{i}{4}(u_1^{-1}+u_2^{-1})\sqrt{|u_1u_2|}\right)\left(1+\mathcal O\left(\frac{s}{n}\right)\right),
\end{aligned}\end{equation}
as $\frac{s}{n}\to 0$.

Define 
\begin{equation}
r(z)=((z-b_1)(z-b_2))^{1/2},
\end{equation}
where the square root has a branch cut on $ J_2$, and is positive as $z\to +\infty$. 
Let 
\begin{equation}
 h(z)=r(z)\int_{\Sigma^o_2} \frac{d\xi}{r(\xi)(\xi-z)},\qquad z\in\mathbb C\setminus(J_2\cup\Sigma_2^o),
\end{equation}
where integration is taken in counter-clockwise direction.
It is easily verified by differentiation that
\begin{equation}
-r(z)\int_{\widetilde C}^t \frac{d\xi}{r(\xi)(\xi-z)}=\log \left(
\frac{2r^2(z)+(2z-b_1-b_2)(t-z)+2r(z)r(t)}{t-z}
\right)+C(z,\tilde C),
\end{equation}
for any constant $\widetilde C$ and some function $C(z)$. Thus
\begin{equation}
\label{def h}\begin{aligned}
 &h(z)=r(z)\int_{b_2}^1 \frac{d\xi}{r(\xi)(\xi-z)}\\
 &=\log \frac{b_2-b_1}{2}(z-1)-\log \left(z\left(1-\frac{b_1+b_2}{2}\right) +b_1b_2 - \frac{b_1+b_2}{2}+r(z)r(1)\right).
\end{aligned}\end{equation}
 The function $h$ has a logarithmic singularity at $z=1$ and a jump on $\Sigma_2^o \bigcup J_2$, such that
 \begin{equation} \label{jump h}
\begin{aligned}
h_+-h_-&=\begin{cases} 0& \textrm{for } z\in \Sigma_1^o,\\  2\pi i&\textrm{for } z\in \Sigma_2^o, \end{cases}\\
h_++h_-&=0 \quad \textrm{for } z \in J_2.
\end{aligned}
\end{equation}
The jump conditions \eqref{jump h} also imply that
\begin{equation} \label{value hb1}
\begin{aligned} h(b_1)&=0\\
h_+(b_2)&=-h_-(b_2)=\pi i. \end{aligned}
\end{equation}
As $z \to \infty$, 
\begin{equation}\label{hinfty}
h(z) \to \log \frac {b_2-b_1}{\left((1-b_1)^{1/2}+(1-b_2)^{1/2}\right)^2}\equiv h(\infty)
.\end{equation}
On the interval $\Sigma^o$ we can alternatively write $h$ in the following form:
\begin{equation} \label{h Sigma}
\exp(h(z))=\frac{\sin \frac{\theta_1-\theta_2}{2}\sin \frac{\theta}{2}}{\cos\frac{\theta-\theta_1-\theta_2}{2}-\cos \frac{\theta_1-\theta_2}{2}\cos \frac{ \theta}{2}+2\sqrt{|\sin \frac{\theta-\theta_1}{2}\sin \frac{\theta-\theta_2}{2}\sin \frac{\theta_1}{2}\sin \frac{\theta_2}{2}}| }  .
\end{equation}
With the notation of \eqref{def w,Bj} we can write
\begin{equation}
h(z)=\log w\frac{B_1-B_2}{(B_1+B_2)w+2B_1B_2+2\sqrt{(w+B_1)(w+B_2)}\sqrt{B_1B_2}}.
\end{equation}
Then we can expand $h$ at the point $z=1$:
\begin{equation} \label{expansionh}
h(z)=\log w+c_0'+c_1'w + \mathcal O(w^2),
\end{equation}
where
\begin{equation}
\label{expansions} \begin{aligned}c_0'&=-\log 4 \frac{B_1B_2}{B_1-B_2}=\left(-\frac{\pi i}{2}+\log \frac{(u_1^{-1}-u_2^{-1})}{4}\right)+\mathcal O\left(\frac{s}{n}\right),\\
c_1'&=-\frac{B_1+B_2}{2B_1B_2}=-\frac{i}{2}(u_1^{-1}+u_2^{-1})+\mathcal O\left(\frac{s}{n}\right),
\end{aligned}\end{equation}
as $\frac{s}{n}\to 0$.

We define the $g$-function by:
\begin{align} \label{def g birth}
g(z)=g_1(z)+\frac{\Omega}{2\pi} h(z),
\end{align}
where $\Omega>0$ is a constant yet to be fixed. The jump conditions for $g_1$ and $h$ imply that
\begin{equation}\label{jump g}
\begin{aligned}
g_++g_-&=\log z && \textrm{for } z\in J_2,\\
g_+-g_-&=0 && \textrm{for } z\in \Sigma_1^o,\\
g_+-g_-&=i\Omega && \textrm{for } z\in \Sigma_2^o.
\end{aligned}\end{equation}

 We define the local variable $ \zeta$ on a disc $U_0$ containing the interval $J_1$ (but not the points $b_1,b_2$), by
\begin{equation} \label{def zeta}
 \zeta(z)=e^{\frac{2\pi}{\Omega}\left(g(z)-\frac{1}{2}\log z\right)}
=e^{h(z)+\frac{2\pi}{\Omega}\left(g_1(z)-\frac{1}{2}\log z\right)}.
\end{equation}
The jump conditions for $g$ imply that the function $\zeta$ is analytic in $U_0$. The precise radius of $U_0$ will be determined later on by requiring that the mapping $\zeta$ be conformal on $U_0$.

Since $e^{g_1}$ maps $\mathbb C\setminus J_2$ to the exterior of the unit disc,  we have
\begin{equation} \label{eg1>0}
e^{g_1(e^{\pm i \theta_0})\mp i\theta_0/2}>1.\end{equation}
For $u_0<\epsilon$ with some $\epsilon>0$, it follows from \eqref{h Sigma} that
\begin{equation}\label{eh}
 e^{h(e^{\pm i \theta_0})}\in \begin{cases} (0,1) &\textrm{for ``$+$",}\\
(-1,0)&\textrm{for ``$-$".} \end{cases}
\end{equation}
Now consider, as a function of $\Omega$,
\begin{equation} \label{zeta-zeta}
 \zeta(a)-\zeta(\overline a). \end{equation}
By \eqref{eg1>0} and \eqref{eh} it follows that if we let $\Omega=+\infty$ in \eqref{def zeta} then  \eqref{zeta-zeta} is smaller than 2, and if we instead set $\Omega = +0$ then \eqref{zeta-zeta} is equal to $+\infty$. Since \eqref{def Omega} is monotone in $\Omega$, there exists, for $u_0<\epsilon$, a unique value for $\Omega>0$ such that
\begin{equation} \label{def Omega}
 \zeta(a)-\zeta(\overline a)=4. \end{equation} 
We define $\Omega$ so that $\zeta$ satisfies \eqref{def Omega}.
From \eqref{g1 Sigma} and \eqref{h Sigma}, it follows that $\zeta(\Sigma^o)\subset \mathbb R$.
By \eqref{expansion g1a} and \eqref{expansionh} we have the following expansion at the point $z=1$:
\begin{equation} \label{zeta exp} \begin{aligned}
\zeta(z)=&w\zeta_0
 \left(1+\zeta_1w+\mathcal O\left(\left(\frac{sw}{n\Omega}\right)^2\right)\right),
 \qquad w=\frac{n}{s}(z-1),
 \\
 \zeta_0&=e^{c_0'+\frac{2\pi}{\Omega}g_1(1)},\\
 \zeta_1&=c_1'+\frac{2\pi s}{n \Omega}(c_1-1/2).\\
\end{aligned} \end{equation}

In what follows, it will be apparent that $\Omega \to 0$ in the limit $s,n\to \infty$ and $u_0,s/n \to 0$. Moreover, by \eqref{expansion g1} and \eqref{expansions},
\begin{equation}\label{zeta0zeta1}
\begin{aligned}
\zeta_0&=-\frac{i}{4}(u_1^{-1}-u_2^{-1})e^{\frac{\pi}{\Omega}\frac{s\sqrt{|u_1u_2|}}{n}(1+\mathcal O(s/n))}(1+\mathcal O(s/n))\\
\zeta_1&=\frac{2\pi s}{n\Omega}(c_1-1/2)-\frac{i}{2}(u_1^{-1}+u_2^{-1})+\mathcal O(s/n).
\end{aligned}
\end{equation}
Substituting these expansions into \eqref{zeta exp}, which we in turn substitute into \eqref{def Omega}, we obtain 
\begin{equation} \label{expansion u0}
u_0\left(1+\mathcal O\left( \frac{1}{\Omega^2}\frac{u_0^2 s^2}{n^2}\right)\right)=\frac{8}{u_1^{-1}-u_2^{-1}}e^{-\frac{2\pi}{n\Omega}\frac{s\sqrt{|u_1u_2|}}{2}(1+\mathcal O(s/n))}(1+\mathcal O(s/n)),
\end{equation}
or upon taking the logarithm,
\begin{equation}
\log\left(u_0\frac{u_1^{-1}-u_2^{-1}}{8}\right)^{-1}=\frac{\pi s \sqrt{|u_1u_2|}}{\Omega n}\left(1+\mathcal O(s/n)+\mathcal O(\Omega)+\mathcal O \left( \frac{1}{\Omega}\frac{u_0^2s}{n}\right)\right).
\end{equation}
Therefore,
\begin{equation}\label{expansion Omega} 
\Omega=\frac{\pi s \sqrt{|u_1u_2|}}{n}\Big/\log \frac{8}{(u_1^{-1}-u_2^{-1})u_0}\left( 1+\mathcal O(s/n)+\mathcal O\left(u_0^2\log u_0^{-1}\right)\right).
\end{equation}

Using the definition of $g_1$, $h$ and $\zeta$ in \eqref{g_1}, \eqref{def h}, \eqref{def zeta}, and the expansion of $\Omega$ in \eqref{expansion Omega}, it is easily seen that there are constants $m_1<m_2$ independent of $s,n,u_0$ such that $\zeta'(z)$ has at least one zero in the set
\begin{equation}
\left\{z:\frac{sm_1}{n\log u_0^{-1}}<|z-1|<\frac{sm_2}{n\log u_0^{-1}}\right\}.
\end{equation}
By \eqref{zeta0zeta1} and expansion \eqref{expansion Omega}, $m_1$ may be chosen such that 
\begin{equation}
\left|\zeta_1 w+\mathcal O \left( \left( \frac{sw}{n\Omega}\right)^2 \right)\right|<1
\end{equation}
as $\frac{sw}{n\Omega} \to 0$
and so $\zeta$ is conformal on the following disc
\begin{equation} \label{U0}
\left\{z:|z-1|<\frac{sm_1}{n\log u_0^{-1}}\right\},
\end{equation}
for $u_0,s/n<\epsilon$ for some fixed $\epsilon>0$. Thus we define $U_0$ to be the set \eqref{U0}.

We define $\widetilde g$ as 
\begin{equation}\label{def tilde g}
\widetilde g=\lim_{z\to\infty} e^{g(z)-\log z}=  \frac{2}{b_1^{1/2}+ b_2^{1/2}}\left(\frac{b_2-b_1}{\left((1-b_1)^{1/2}+(1-b_2)^{1/2}\right)^2}\right)^\frac{\Omega}{2\pi} ,
\end{equation}
and $T$ as
\begin{equation} \label{defT}
T(z)=\widetilde g^{n \sigma_3}Y(z)e^{-ng(z)\sigma_3}.
\end{equation}
It follows by \eqref{jump g} that $T$ satisfies the following RH problem:
\begin{itemize}
\item[(a)] $T:\mathbb C \setminus C \to \mathbb C^{2\times 2}$ is analytic.
\item[(b)] $T$ has the following jumps on $C$:
\begin{align*}
T_+&=T_-\begin{pmatrix}
e^{n(g_--g_+)(z)}&1\\ 0& e^{n(g_+-g_-)(z)}
\end{pmatrix} &&  \textrm{for $z\in J_2$,}\\
T_+&=T_- e^{-in \Omega\sigma_3} && \textrm{for $z\in \Sigma_2$.}\\
T_+&=T_- \begin{pmatrix} e^{-in\Omega}&z^{-n}e^{n(g_++g_-)}\\0&e^{in\Omega} \end{pmatrix} && \textrm{for $z\in (\overline a,1)=\Sigma_2^o\bigcap J_1$,}\\
T_+&=T_- \begin{pmatrix}1&z^{-n}e^{2ng}\\0&1\end{pmatrix} && \textrm{for $z\in (1,a)=\Sigma_1^o \bigcap J_1$.}\\
T_+&=T_-  && \textrm{for $z \in \Sigma_1$.}
\end{align*}
\item[(c)] As $z\to \infty$,
\[ T(z)= I+\mathcal O (z^{-1}). \]
\end{itemize}
The jump of $T$ on $J_2$ factorizes as
\begin{equation}
\begin{pmatrix}
e^{n(g_--g_+)(z)}&1\\ 0& e^{n(g_+-g_-)(z)}
\end{pmatrix} =
\begin{pmatrix}1&0\\ e^{n(g_+-g_-)(z)}&1\end{pmatrix}
\begin{pmatrix} 0&1\\-1&0\end{pmatrix}
\begin{pmatrix}1&0\\e^{n(g_--g_+)(z)}&1\end{pmatrix}.
\end{equation}

\begin{figure}
\begin{tikzpicture}
\draw [decoration={markings, mark=at position 0 with {\arrow[thick]{>}}},
        postaction={decorate},decoration={markings, mark=at position 0.5 with {\arrow[thick]{>}}},
        postaction={decorate},dashed, gray] (0,0) circle[radius=2]; 
\draw[thick](2,0) arc [radius=2,start angle=0, end angle=10];
\draw[thick](2,0) arc [radius=2,start angle=360, end angle=350];
\draw[decoration={markings, mark=at position 0.99 with {\coordinate(b1);}},
        postaction={decorate},thick](-2,0) arc [radius=2,start angle=180, end angle=40];
\draw[decoration={markings, mark=at position 0.99 with {\coordinate(b2);}},
        postaction={decorate},thick](-2,0) arc [radius=2,start angle=180, end angle=330];
\node [right] at (2,0.5) {$a$};
\node [right] at (2,-0.5) {$\bar{a}$};
\node [right] at (b1) {$b_1$};
\node [right] at (b2) {$b_2$};
\node [right] at (-1.5,0) {$\Gamma_S^{\textrm{In}}$};
\node [left] at (-2.5,0) {$\Gamma_S^{\textrm{Out}}$};
\draw[decoration={markings, mark=at position 0.5 with {\arrow[thick]{>}}},
        postaction={decorate}] (b1)  to [out=90,in=0] (0,2.5) to [out=180,in=90] (-2.5,0) to [out=-90,in=180] (0,-2.5) to [out=0,in=-90]  (b2);
\draw[decoration={markings, mark=at position 0.5 with {\arrow[thick]{>}}},
        postaction={decorate}] (b1)  to [out=180,in=0] (0,1.5) to [out=180,in=90] (-1.5,0) to [out=-90,in=180] (0,-1.5) to [out=0,in=-180]  (b2);
\end{tikzpicture}
\caption{Contour $\Gamma_S$.}\label{Contour GammaS2}
\end{figure}

Define 
\begin{equation} \label{def phi} \phi(z)=2g(z)-\log(z),\end{equation}
 Then the jumps of $ e^{\phi}$ are induced by $g$ and we obtain by \eqref{jump g} that for $z\in J_2.$
\begin{equation} \begin{aligned} \label{phi+J}
\exp( \phi_+(z))&=\exp[(g_+(z)+g_-(z)-\log(z))+g_+(z)-g_-(z)]\\ &=\exp(g_+(z)-g_-(z)),\end{aligned} \end{equation}
and similarly
\begin{align}
& \exp (\phi_-(z))=\exp(g_-(z)-g_+(z)).\label{phi-J}
\end{align}
 We proceed to open the lenses around $J_2$ as in Figure \ref{Contour GammaS2}.
\begin{Prop} For $z$ on the edges of the lense $\Gamma_S^{\textrm{in}}\cup \Gamma_S^{\textrm{out}}$ in Figure \ref{Contour GammaS2} such that $|z-b_1|,|z-b_2|>\epsilon s/n$ for some fixed $\epsilon>0$, there exists a constant $C>0$ independant of $s,n,z$ such that 
\begin{equation}
e^{-ng(z)}<e^{-sC},
\end{equation}
as $s,n \to \infty$ and for $u_0$ sufficiently small.
\end{Prop}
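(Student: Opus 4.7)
The goal is equivalent to the lower bound $\Re g(z) \geq Cs/n$ on the relevant portion of the lens, since then $|e^{-ng(z)}|=e^{-n\Re g(z)}\leq e^{-sC}$. The natural approach is to split $g=g_1+(\Omega/2\pi)h$ via \eqref{def g birth} and estimate the two summands separately. The core input is the mapping property of $e^{g_1}$: it sends $\mathbb{C}\setminus J_2$ to the exterior of the unit disc, so $\Re g_1>0$ strictly off $J_2$. The smallness of $\Omega$ (see \eqref{expansion Omega}) will then ensure that the $h$-piece, despite its mild logarithmic singularity at $z=1$, cannot spoil this positivity on the lens.

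For $g_1$ I would distinguish a bulk region, where $|z-b_j|\geq\delta$ for some fixed $\delta>0$, from an endpoint region. In the bulk the lens stays at macroscopic distance $\Theta(1)$ from the unit circle; since $b_1,b_2\to 1$ one obtains from \eqref{g_1} the uniform estimate $g_1(z)=\log z+O(s/n)$, whence $\Re g_1(z)\geq\tfrac12\log|z|\geq c>0$, which is far stronger than needed. In the endpoint region I would introduce the scaled variable $\xi=(n/s)(z-b_j)$ and expand \eqref{g_1} around $b_j$ using $g_1(b_j)=\tfrac12\log b_j$:
\[
g_1(z)-\tfrac12\log b_j=\frac{\sqrt{(z-b_j)(b_j-b_{3-j})}}{b_j+\sqrt{b_1b_2}}\bigl(1+O(s/n)\bigr).
\]
Since $b_j-b_{3-j}$ is, to leading order, purely imaginary of size $O(s/n)$, and since the lens meets the unit circle transversely at $b_j$, $\xi$ runs in the real direction on the scaled lens; hence the square root has argument $\pm\pi/4$ and real part of order $\sqrt{|\xi|}(s/n)$. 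Combined with $\Re(\tfrac12\log b_j)=0$, this yields $\Re g_1(z)\geq c'\sqrt{\epsilon}\,(s/n)$ throughout the endpoint region, and hence $\Re g_1(z)\geq C_1(s/n)$ uniformly on the lens with $|z-b_j|\geq\epsilon s/n$.

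For the $h$-contribution I would observe from \eqref{def h} that the only singularity of $h$ on the lens is the logarithmic one at $z=1$, and that $|z-1|\geq c''(s/n)$ on the lens (this uses that $b_j$ itself is at distance $O(s/n)$ from $1$ in the transverse imaginary direction, while the lens leaves $b_j$ in the real direction). Hence $|h(z)|\leq C_2\log(n/s)$, and combined with $\Omega=O(s/(n\log u_0^{-1}))$ from \eqref{expansion Omega},
\[
\tfrac{\Omega}{2\pi}|h(z)|=O\!\left(\frac{s}{n}\cdot\frac{\log(n/s)}{\log u_0^{-1}}\right)=o(s/n)
\]
provided $\log u_0^{-1}$ eventually dominates $\log(n/s)$, which is the quantitative sense of ``$u_0$ sufficiently small.'' Adding the two bounds yields $\Re g(z)\geq (C_1/2)(s/n)$ for $s,n$ large and $u_0$ small enough, whence the proposition follows with $C=C_1/2$.

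The main obstacle is the endpoint analysis: one must carefully track the branch of the square root in \eqref{g_1}, whose cut sits on $J_2$, to confirm that the scaled leading term has strictly positive real part on both the inner and outer edges of the lens, uniformly in the fixed parameters $u_1,u_2$. Once this local picture is in place, the interpolation between the endpoint and bulk regions is a routine continuity-and-compactness argument, and the $h$-bound is straightforward since $\Omega\to 0$ in the prescribed double scaling.
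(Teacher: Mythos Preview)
Your bulk argument breaks down on the \emph{inner} lens $\Gamma_S^{\textrm{in}}$. The expansion $g_1(z)=\log z+O(s/n)$ is only valid for $|z|>1$: the branch of $r(z)=((z-b_1)(z-b_2))^{1/2}$ is fixed by positivity at $+\infty$ and has its cut on $J_2$, so as $b_1,b_2\to 1$ one gets $r(z)\to z-1$ for $|z|>1$ but $r(z)\to 1-z$ for $|z|<1$. Hence for $z$ on the inner lens at macroscopic distance from the circle,
\[
e^{g_1(z)}=\frac{z+\sqrt{b_1b_2}+r(z)}{\sqrt{b_1}+\sqrt{b_2}}\longrightarrow \frac{z+1+(1-z)}{2}=1,
\]
i.e.\ $g_1(z)\to 0$, not $\log z$. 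A direct expansion (e.g.\ at $z=0$ one finds $\Re g_1(0)=-\log\cos\tfrac{\theta_1-\theta_2}{4}=O((s/n)^2)$) shows that $\Re g_1(z)=O((s/n)^2)$ on the inner lens bulk, which is far smaller than $Cs/n$. Since the $h$-contribution is $O(s/(n\log u_0^{-1}))$, your target inequality $\Re g(z)\geq Cs/n$ is simply false there. (The statement of the proposition is almost certainly a typo for $|e^{-n\phi(z)}|<e^{-sC}$, which is the quantity actually appearing in the jump of $S$.)

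The paper repairs this by working with $\phi=2g-\log z$ rather than $g$. For $|z|<1$ the mapping property $|e^{g_1}|>1$ immediately gives $|e^{2g_1-\log z}|>|z|^{-1}>1$; for $|z|>1$ the paper introduces $g_1^{(-)}$ (same formula as $g_1$ but with the opposite sign of the square root), uses the identity $e^{g_1+g_1^{(-)}}=z$ and the fact that $e^{g_1^{(-)}}$ maps into the unit disc to get $|e^{2g_1-\log z}|=|z|\,|e^{-2g_1^{(-)}}|>1$. In either case $\Re(2g_1-\log z)$ is bounded below by a fixed positive constant on the bulk of the lens, the $(\Omega/\pi)h$ term is $o(1)$, and $|e^{-n\phi}|=O(e^{-cn})$ follows. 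Near the endpoints the paper's argument is essentially yours (square-root expansion of $g_1-\tfrac12\log z$), but it also expands $h$: one has $h(z)=O(|w_j|^{1/2})$ near $b_j$ by \eqref{hexpb1}--\eqref{hw2}, so the $h$-piece is dominated by the $g_1$-piece without invoking the extra hypothesis $\log u_0^{-1}\gg\log(n/s)$ that your cruder global bound $|h|\le C_2\log(n/s)$ forces.
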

\begin{proof}
Since $e^{g_1}$ sends $\mathbb C \setminus J_2$ to the outside of the unit disc, it is clear that for $|z|<1$ we have 
\begin{equation}\label{ineq g1} |e^{2g_1(z)-\log (z)}|>1. \end{equation}
Let $g_1^{(-)}$ denote the function defined as in \eqref{g_1},
 but with $+((z-b_1)(z-b_2))^{1/2}$ replaced with $-((z-b_1)(z-b_2))^{1/2}$
  Then $e^{g_1^{(-)}}$ maps $\mathbb C \setminus J_2$ to the inside of the circle and for $z\in \mathbb C \setminus J_2$ we have the relation
\begin{equation}e^{g_1(z)+g_1^{(-)}(z)}=z. \end{equation}
It follows that if $|z|>1$, then 
\begin{equation} \label{ineq g1 2}
|e^{2g_1(z)-\log z}|=|e^{\log z- 2g_1^{(-)}(z)}|>1.
\end{equation}
Using \eqref{ineq g1}, \eqref{ineq g1 2}, the definition of $g$, and the fact that $\Omega=\mathcal O(s/(n\log u_0^{-1})),$
as $s/(n\log u_0^{-1})\to 0$, it follows that $e^{-\phi(z)}$ lies in interior of the unit disc for $z$ sufficiently close to the interval $J_2$, and in particular that 
\begin{equation}e^{-n\phi(z)}=\mathcal O\left(e^{-cn}
\right), \end{equation}
uniformly  for $z$ on the lense that is opened around $J_2$ in Figure \ref{Contour GammaS2}, except near the endpoints $b_1$ and $b_2$, for some constant $c>0$. 

Consider $h(z)$ and $g_1(z)$ at $z=b_1$. Let $w_1=\frac{n}{s}(z-b_1)$. From \eqref{def h} we have, with $B_j=\frac{n}{s}(1-b_j)$, 
\begin{equation}\label{hw1}
h(z)=\log \frac{B_1-B_2}{2}(w_1-B_1)-\log \left(
w_1\frac{B_1+B_2}{2}+\frac{B_2-B_1}{2}B_1+(w_1(w_1+B_2-B_1)B_1B_2)^{1/2}
\right).
\end{equation}
It follows from \eqref{jump h} and \eqref{hw1} that $h(z)/w_1^{1/2}$ is analytic at $w_1=0$, and we have
\begin{equation}\label{hexpb1}
h(z)=-w_1^{1/2} \frac{2}{(B_2-B_1)^{1/2}}\left(\frac{B_2}{B_1}\right)^{1/2}(1+\mathcal O(w_1)).
\end{equation}
Likewise, we let $w_2=\frac{n}{s}(z-b_2)$. Then at the point $w_2=0$ we have the expansion
\begin{equation}\label{hw2}
h(z)=\pm \pi i -w_2^{1/2} \frac{2}{(B_1-B_2)^{1/2}}\left(\frac{B_1}{B_2}\right)^{1/2}(1+\mathcal O(w_2)),
\end{equation}
where $\pm$ means $+$ on the $+$side and $-$ on the $-$side of the unit circle $C$ (so the jumps agree with \eqref{jump h}).

We evaluate $g_1$ using the definition \eqref{g_1}:
\begin{equation}\label{g1w1}
g_1(z)=\log \frac{b_1+(b_1b_2)^{1/2} +\frac{s}{n}((w_1(w_1+B_2-B_1))^{1/2}+w_1)}{b_1^{1/2}+b_2^{1/2}}.
\end{equation}
From \eqref{g1w1} and \eqref{jump g1}, we have that $(g_1(z)-\frac{\log z}{2})/w_1^{1/2}$ is analytic at $w_1=0$, and that at the point $w_1=0$ we have the expansion
\begin{equation} \label{g1expb1}
g_1(z)-\frac{\log z}{2}=\frac{s}{n}w_1^{1/2}\frac{(B_2-B_1)^{1/2}}{b_1+(b_1b_2)^{1/2}}(1+\mathcal O(w_1)).
\end{equation}
Likewise we expand $g_1$ at the point $w_2=0$:
\begin{equation} \label{g1expb2}
g_1(z)-\frac{\log z}{2}=\frac{s}{n}w_2^{1/2}\frac{(B_1-B_2)^{1/2}}{b_2+(b_1b_2)^{1/2}}(1+\mathcal O(w_2)).
\end{equation}

Consider now a neighbourhood of $b_1$.
The error terms in \eqref{hexpb1}, \eqref{g1expb1} are uniform for $0<s/n<\delta$ for some sufficiently small $\delta$.
By \eqref{hexpb1}, \eqref{g1expb1} and the fact that $\Omega=\mathcal O\left(\frac{s}{n\log u_0^{-1}}\right)$,
it follows that there is a constant $C_1>0$ independant of $s,n,z$ for $u_0$ sufficiently small such that
\begin{equation} \label{ineqg}
\Re (g)> w_1^{1/2}C_1 s/n
\end{equation}
as $z \to b_1, s/n\to 0$. Thus from \eqref{ineqg} it follows that there exists $\epsilon,C>0$ such that for all $|z-b_1|>\epsilon s/n$ we have
\begin{equation}
e^{-ng(z)}<e^{-sC}
\end{equation}
as $n,s \to \infty$, and for $u_0$ sufficiently small. The same may be shown at the point $b_2$, concluding the proof.
\end{proof}

Let
\begin{equation}\label{defS}
S(z)=\begin{cases}T(z) & \textrm{for $z$ outside the lenses,}\\
T(z)\begin{pmatrix} 1&0\\e^{-n \phi(z)}&1 \end{pmatrix} & \textrm{for $z$ inside the lense and outside the unit disc,}\\
T(z)\begin{pmatrix} 1&0\\-e^{-n \phi(z)}&1 \end{pmatrix} & \textrm{for $z$ inside the lense and inside the unit disc.}
\end{cases}
\end{equation}

Then $S$ satisfies the following RH problem:
\begin{itemize}
\item[(a)] $S :\mathbb C \setminus \Gamma_S \to \mathbb{C}^{2\times 2}$ is analytic, where $\Gamma_S= C\cup \Gamma_S^{in} \cup \Gamma_S^{out}$ as shown in Figure \ref{Contour GammaS2}.
\item[(b)] On $\Gamma_S \setminus \{a,\overline{a},b_1,b_2\}$, $S$ has the following jumps:
\begin{align*}
S_+&=S_-\begin{pmatrix}0&1\\-1&0\end{pmatrix}& & \textrm{for $z \in J_2$,}\\
S_+&=S_-\begin{pmatrix} 1&0\\e^{-n\phi(z)}&1 \end{pmatrix}  &&\textrm{for $z\in \Gamma_S^{in}\cup \Gamma_S^{out}$,}\\
S_+&=S_- e^{-in \Omega\sigma_3} && \textrm{for $z\in \Sigma_2$.}\\
S_+&=S_- \begin{pmatrix} e^{-in\Omega}&z^{-n}e^{n(g_++g_-)}\\0&e^{in\Omega} \end{pmatrix} && \textrm{for $z\in (\overline{a},1)=J_1\bigcap \Sigma_2^o$,}\\
S_+&=S_- \begin{pmatrix}1&z^{-n}e^{2ng}\\0&1\end{pmatrix} && \textrm{for $z\in (1,a)=J_1\bigcap \Sigma_1^o$.}\\
S_+&=S_-  && \textrm{for $z \in \Sigma_1$.}\end{align*}
\item[(c)] As $z\to \infty$,
\begin{equation}
S(z)=I+\mathcal{O} \left( z^{-1}\right). \end{equation}
\end{itemize}

\subsection{Main parametrix}

In the region $\mathbb C\setminus (U_0\cup U_1 \cup U_2)$,
we approximate the RH problem for $S$ by a main parametrix $M$, which satisfies the RH problem:
\begin{itemize}
\item[(a)] $M:\mathbb C \setminus \overline{ \{J_2 \bigcup \Sigma_2^o\}} \to \mathbb{C}^{2\times 2}$ is analytic.
\item[(b)] On $J_2$ and $\Sigma_2^o$, $M$ has the following jumps:
\begin{align*}
M_+(z)&=M_-(z)\begin{pmatrix}0&1\\-1&0\end{pmatrix}& & \textrm{for $z \in J_2$,}\\
M_+(z)&=M_-(z) e^{-in \Omega\sigma_3} && \textrm{for $z\in \Sigma_2^o$}.\end{align*}
\item[(c)] As $z\to \infty$,
\[ M(z)=I +\mathcal O\left(z^{-1}\right).\]
\end{itemize}
A solution to the RH problem for $M$ is given by
\begin{align}\label{DefM}
 M(z)&=\left(I+\frac{F}{z-1}\right)D^{-1}(\infty)\begin{pmatrix}
\gamma_1(z)&
-\gamma_2(z)\\
\gamma_2(z)&
\gamma_1(z)
\end{pmatrix}D(z),
\end{align}
where $F$ is a constant matrix and
\begin{equation} \label{def epsilon12}
\begin{aligned}
\gamma_1(z)&=\frac{1}{2}\left(\left(\frac{z-b_1}{z-b_2}\right)^{1/4}+\left(\frac{z-b_2}{z-b_1}\right)^{1/4}\right) \\
\gamma_2(z)&=\frac{1}{2i}\left(\left(\frac{z-b_1}{z-b_2}\right)^{1/4}-\left(\frac{z-b_2}{z-b_1}\right)^{1/4}\right)
\end{aligned}
\end{equation}
with branch cuts on $J_2$ and such that $\gamma_1(z)\to 1$ and $\gamma_2(z)\to 0$ as $z \to \infty$. For $y\in \mathbb R$, let $\left<y\right>$ be defined such that
\begin{equation}\label{<>}
\left<y\right>\in [-1/2,1/2), \quad y-\left<y\right> \in \mathbb Z.
\end{equation}
Then $D$ is given by
\begin{equation} 
D(z)=\exp \left(-\left<\frac{n\Omega}{2\pi}\right> h(z)\sigma_3 \right),
\end{equation}
where it follows from the jumps of $ h$ \eqref{jump h} that $D$ is analytic for $z\in \mathbb C \setminus (\overline{\Sigma_2^o\bigcup J_2})$, and
\begin{equation}\begin{aligned}
D_-^{-1}D_+&= \exp\left( - 2\pi i \left< \frac{n\Omega}{2\pi}\right>\sigma_3\right)&&\textrm{for } z\in \Sigma_2^o, \\
D_-D_+&=I &&\textrm{for } z\in J_2.
\end{aligned}\end{equation}
The function $M$ defined in \eqref{DefM} will solve the RH problem for $M$ with any constant matrix $F$, which we will define later in \eqref{def F}.
The reason for the prefactor $I+\frac{F}{z-1}$ in \eqref{DefM}, which does not affect the jump conditions for $M$, will become apparent later on.

\subsection{Model RH problem $\Phi$}
Consider the following RH problem for $\Phi(\zeta;k)$, where $k\in \mathbb N$:
\begin{itemize}
\item[(a)] $\Phi(\zeta):\mathbb C\setminus [\eta_1,\eta_2] \to \mathbb C^{2\times 2}$ is analytic for given $\eta_1<\eta_2$.
\item[(b)] $\Phi$ has $L^2$ boundary values for $\zeta\in(\eta_1,\eta_2)$ satisfying
\begin{align*}
\Phi_+(\zeta;k)=\Phi_-(\zeta;k)
\begin{pmatrix} 1&1\\0&1 \end{pmatrix}. \end{align*}
\item[(c)]
As $\zeta\to \infty$, 
\begin{equation}\label{Phiinf}
\Phi(\zeta;k)=\left(I+\frac{\Phi_1}{\zeta}+\frac{\Phi_2}{\zeta^2}+\mathcal O\left( \zeta^{-3}\right)\right) \zeta^{k\sigma_3}. \end{equation}
\end{itemize}
It is well-known by the standard theory and is easy to verify that the unique solution to this RH problem is given by
\begin{equation}\label{Phi Soln}
\begin{aligned}
\Phi(\zeta;0)&=\begin{pmatrix}
1&\frac{1}{2\pi i}\log \left(\frac{\zeta-\eta_2}{\zeta-\eta_1}\right)\\0&1
\end{pmatrix}\\
\Phi(\zeta;k)&=\begin{pmatrix}
 \frac{1}{\kappa_k}L_k(\zeta)	&	\frac{1}{2\pi i \kappa_k} \int_{\eta_1}^{\eta_2} \frac{L_k(x)}{x-\zeta}  dx \\
-2\pi i \kappa_{k-1}L_{k-1}(\zeta)	&-\kappa_{k-1} \int_{\eta_1}^{\eta_2} \frac{L_{k-1}(x)}{x-\zeta}dx
\end{pmatrix}&\textrm{for $k\geq 1$,}
\end{aligned}
\end{equation}
where $L_k$ are the Legendre polynomials of degree $k$ with positive leading coefficients, orthonormal on $(\eta_1,\eta_2)$:
\begin{equation}
\int_{\eta_1}^{\eta_2} L_k(\zeta)L_j(\zeta)d\zeta= \delta_{jk}=\begin{cases} 0& \textrm{for $j\neq k$,}\\ 1& \textrm{for $j=k$,} \end{cases}
\end{equation}
and we denote the first 3 leading coefficients as follows:
\begin{equation} L_k(\zeta)=\kappa_k \zeta^k+\mu_k \zeta^{k-1}+\nu_k \zeta^{k-2}+\dots . \label{coeff} \end{equation}
Writing the large $\zeta$ expansion of \eqref{Phi Soln} and using orthogonality in the second column, we obtain that 
\begin{equation} \label{def Phi1} \begin{aligned}
\Phi_1&=\begin{pmatrix} \frac{\mu_k}{\kappa_k}&-\frac{1}{2\pi i}\kappa_k^{-2}\\-2\pi i \kappa_{k-1}^2&-\frac{\mu_k}{\kappa_k} \end{pmatrix}&&\textrm{for $k\geq 1$,}\\
\Phi_2 &=\begin{pmatrix} \frac{\nu_k}{\kappa_k}&\frac{\mu_{k+1}}{2\pi i\kappa_{k+1}\kappa_k^2} \\ -2\pi i \kappa_{k-1}\mu_{k-1} & \frac{1}{\kappa_k\kappa_{k+1}}(\mu_k\mu_{k+1}-\nu_{k+1}\kappa_k) \end{pmatrix}&& \textrm{for $k\geq2$}.
 \end{aligned}\end{equation}
 When $k=0$ we have
 \begin{equation}
 \Phi_1= \begin{pmatrix}
 0&-\frac{\eta_2-\eta_1}{2\pi i}\\0&0
 \end{pmatrix}.
 \end{equation}
It is well known that $L_k$ has the explicit representation for $k \geq 0$:
\begin{equation}\label{series Lk} \begin{aligned}
L_k(\zeta)&=\sqrt{\frac{2k+1}{\eta_2-\eta_1}} \sum_{j=0}^k \binom{ k}{j} \binom{ k+j}{ j}  \left(\frac{\zeta-\eta_2}{\eta_2-\eta_1}\right)^j\\
&= \sqrt{\frac{2k+1}{\eta_2-\eta_1}} \sum_{j=0}^k \binom{ k}{j} \binom{ k+j}{ j}  \left(\frac{\zeta-\eta_1}{\eta_2-\eta_1}\right)^j(-1)^{k-j},\end{aligned}\end{equation}
where $\binom{0}{0}=1$.
As a consequence, the coefficients in \eqref{coeff} are given by
\begin{equation}\label{kappa mu nu}
\begin{aligned}
\kappa_k &=(\eta_2-\eta_1)^{-k-1/2}\sqrt{2k+1}\binom{2k}{ k}  ,\\
\mu_k&=-(\eta_2-\eta_1)^{-k-1/2}\sqrt{2k+1}\binom{2k}{ k} \frac{k}{2} (\eta_1+\eta_2) ,\\
\nu_k &=(\eta_2-\eta_1)^{-k-1/2}\sqrt{2k+1}\binom{2k-2}{ k-2}\frac{k}{2}\left(k(\eta_1+\eta_2)^2-\left(\eta_1^2+\eta_2^2\right)\right),
\end{aligned}\end{equation}
for $k\geq 0,1,2$ respectively.
From \eqref{Phi Soln}, \eqref{series Lk}, \eqref{kappa mu nu}, it follows that for $k\geq 1$, 
\begin{equation}\label{Phi local}\begin{aligned}
\Phi(\zeta)&=\begin{pmatrix}(-1)^k \frac{(\eta_2-\eta_1)^k}{ \binom{2k}{ k} } \left(1-\frac{k(k+1)}{\eta_2-\eta_1}(\zeta-\eta_1)+\mathcal O((\zeta-\eta_1)^2)\right)&*\\(-1)^k2\pi i (2k-1) (\eta_2-\eta_1)^{-k}\binom{2k-2}{k-1} \left(1-\frac{k(k-1)}{\eta_2-\eta_1}(\zeta-\eta_1)+\mathcal O((\zeta-\eta_1)^2)\right)&*
\end{pmatrix}, &&  \textrm{as } \zeta\to \eta_1\\
\Phi(\zeta)&= \begin{pmatrix} \frac{(\eta_2-\eta_1)^k}{ \binom{2k}{ k} } \left(1+\frac{k(k+1)}{\eta_2-\eta_1}(\zeta-\eta_2)+\mathcal O((\zeta-\eta_2)^2)\right)&*\\
-2\pi i (2k-1)(\eta_2-\eta_1)^{-k}\binom{2k-2}{k-1}\left(1+\frac{k(k-1)}{\eta_2-\eta_1}(\zeta-\eta_2)+\mathcal O((\zeta-\eta_2)^2)\right)&*
\end{pmatrix}, &&  \textrm{as } \zeta\to \eta_2
\end{aligned} \end{equation}

\subsection{Local parametrix at $1$}\label{Local parametrix at 1}
Recall that $U_0$ defined by \eqref{U0} is an open disc containing $J_1$ and that as $s,n\to \infty, u_0\to 0$ the radius of $U_0$ is of length $\epsilon s/(n\log u_0^{-1})$ for some $\epsilon>0$. On $U_0$ we defined a local variable $\zeta$ in \eqref{def zeta}. 
We define the local parametrix $P$ on $U_0$ by
\begin{equation} \label{defP}
P(z)=E(z)\Phi \left(\zeta(z);  k  \right)e^{-n\left(g(z)-\frac{\log(z)}{2}\right)\sigma_3},\quad
k=\frac{\Omega n}{2\pi}-x,\quad x=\left<\frac{\Omega n}{2\pi}\right>,
\end{equation}
where $\Phi$ is given by \eqref{Phi Soln}, and where $E$ is an analytic function on $U_0$ given by
\begin{equation}\label{defE}\begin{aligned}
E(z)&=\left(I+\frac{F}{z-1}\right)D(\infty)^{-1}
\begin{pmatrix}
\gamma_1(z)&
-\gamma_2(z)\\
\gamma_2(z)&
\gamma_1(z)
\end{pmatrix}
B(z)\left(I-\frac{X}{\zeta(z)}\right),
\\ B(z)&=e^{\frac{2\pi}{\Omega} x\left(g_1(z)-\frac{1}{2}\log z\right)\sigma_3},
\end{aligned}
\end{equation}
with constant matrices $F$ and $X$ defined below.
From \eqref{g1 Sigma} and \eqref{h Sigma} we see that $\zeta(J_1)\subset \mathbb R$. We let $\eta_1=\zeta(\overline a)$ and $\eta_2= \zeta(a)$. Then $\zeta(J_1)=(\eta_1,\eta_2)$, and so $P(z)$ has a jump on $J_1$ induced by that of $\Phi$ on $(\eta_1,\eta_2)$.
$F$ is a constant, nilpotent matrix
\begin{equation}\label{def F}
F=\widetilde h ^{-\sigma_3}\begin{pmatrix} f& \psi f\\ -f/\psi &- f \end{pmatrix}\widetilde h^{\sigma_3},
\end{equation}
where 
\begin{equation} \label{def f}\begin{aligned}\psi &=\begin{cases}-\frac{\gamma_1(1)}{\gamma_2(1)}
\\
\frac{\gamma_2(1)}{\gamma_1(1)}
\end{cases}&&\begin{matrix*}[l]\textrm{for } 0\leq x<1/2,\\  \textrm{for } -1/2\leq x<0, \end{matrix*}\\
\widetilde h&=D_{11}(\infty)=\exp(-xh(\infty))=\left(\frac{(1-b_1)^{1/2}+(1-b_2)^{1/2}}{b_2-b_1}\right)^{-x}\\
f&=\begin{cases}
-\frac{(1-b_1)^{1/2}(1-b_2)^{1/2} \rho}{1+\rho} \\
-\frac{(1-b_1)^{1/2}(1-b_2)^{1/2} \rho}{1-\rho}\end{cases}
&&\begin{matrix*}[l]\textrm{for } 0\leq x<1/2,\\  \textrm{for } -1/2\leq x<0, \end{matrix*}
\\
\rho&=\begin{cases}
\frac{1}{2\pi\kappa_k^2} \exp\left[\frac{2\pi}{\Omega}g_1(1)\left(-1+2x\right)\right]\\
-2\pi\kappa_{k-1}^2 \exp\left[\frac{2\pi}{\Omega}g_1(1)\left(-1-2x\right)\right]
\end{cases}
&&\begin{matrix*}[l]\textrm{for } 0\leq x<1/2,\\  \textrm{for } -1/2\leq x<0 ,\end{matrix*}
\end{aligned}\end{equation}
where $h(\infty)$ was defined in \eqref{hinfty}
and $X$ is a constant matrix given in terms of elements of \eqref{def Phi1}
\begin{equation}
X=\begin{cases}\Phi_{1,12}\begin{pmatrix} 0&1\\0&0 \end{pmatrix} &\textrm{for }0\leq x <1/2,\\
\Phi_{1,21}\begin{pmatrix} 0&0\\1&0 \end{pmatrix} &\textrm{for }-1/2\leq x<0.
\end{cases}
\end{equation}
The factor $I-X/\zeta(z)$ in $E(z)$ is needed to cancel the would be $u_0^{1-2|x|}\log u_0^{-1}$ non-smallness in the matrix elements of $\Delta_1^{(1)}$ originating from $\Phi_{1,12}B_{11}^2$ for $0\leq x<1/2$ and $\Phi_{1,21}B_{11}^{-2}$ for $-1/2\leq x<0$ (see Proposition \ref{Proposition error term} below) so that $P$ and $M$ match to the main order on the boundary 
$\partial U_0$ for all $x\in[-1/2,1/2)$. This factor, however, has a pole at $z=1$, but we need $E(z)$ to be analytic
in $U_0$. As is easy to verify, the analyticity of $E(z)$ (i.e. the absence of a pole at $1$) is achieved by choosing $F$ as
defined in (\ref{def F}).  

\begin{Prop} \label{Proposition error term}
As $u_0\to 0$ and $ s,n\to \infty$, we have the following matching condition uniformly on the boundary $\partial U_0$:
\[
P(z)M^{-1}(z)=I+\Delta_1^{(1)}(z)+\Delta_2^{(1)}(z)+ \Xi^{(1)}+\mathcal O(\widetilde e u_0^{-2|x|}( u_0\log u_0^{-1})^{3}+\widetilde e( s^2u_0^4(\log u_0^{-1})^2),
\]
where
\begin{equation}\label{tilde e}
\widetilde e=\left(1+ u_0^{1-2|x|} \log u_0^{-1}\right)^2,
\end{equation}
and $\Delta_1^{(1)}$, $\Delta_2^{(1)}$ and $\Xi^{(1)}$ are given by (\ref{PM-12}) below. We have, uniformly for $z$ on the boundary $\partial U$,
\begin{equation} \label{prop error term eqn}
\begin{aligned}
\Delta_1^{(1)}(z)&=\mathcal O(\widetilde e u_0^{1+2|x|}\log u_0^{-1}+\widetilde e su_0^2\log u_0^{-1}),\\
\Delta_2^{(1)}(z)&=\mathcal O\left(\widetilde e u_0^2(\log u_0^{-1})^2\left(\frac{s}{\log u_0^{-1}}+u_0^{1-2|x|} \log u_0^{-1}
\right)\right),\\
\Xi^{(1)}&=\mathcal O\left(\widetilde e su_0^{-2|x|}u_0^3(\log u_0^{-1})^2\right)
.
\end{aligned}
\end{equation}

\end{Prop}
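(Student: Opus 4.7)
The strategy is to substitute the definitions \eqref{defP} and \eqref{DefM} directly into $P(z)M^{-1}(z)$, reorganise the product as a conjugation, and then expand using the large-$\zeta$ asymptotic \eqref{Phiinf} of $\Phi$. From \eqref{def zeta} one immediately gets $e^{-n(g(z)-\tfrac12\log z)\sigma_3}=\zeta(z)^{-n\Omega/(2\pi)\sigma_3}$, and since $k=n\Omega/(2\pi)-x$ is a non-negative integer by construction, the powers $\zeta^{k\sigma_3}$ in \eqref{Phiinf} combine with this exponential to give $\zeta^{-x\sigma_3}$. Because $D(z)$, $B(z)$ and $\zeta^{-x\sigma_3}$ are all diagonal they commute, and the definitions of $D$, $B$ and $\zeta$ yield the identity $\zeta(z)^{-x\sigma_3}=D(z)B(z)^{-1}$. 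After these cancellations the product collapses to a pure conjugation
\begin{equation*}
P(z)M^{-1}(z)=\mathcal{E}(z)\,\Bigl(I-\tfrac{X}{\zeta(z)}\Bigr)\Bigl[I+\tfrac{\Phi_1}{\zeta(z)}+\tfrac{\Phi_2}{\zeta(z)^2}+\mathcal{O}\bigl(\zeta(z)^{-3}\bigr)\Bigr]\mathcal{E}(z)^{-1},
\end{equation*}
where $\mathcal{E}(z)=(I+F/(z-1))D(\infty)^{-1}R(z)B(z)$ and $R(z)$ is the $2\times 2$ matrix with rows $(\gamma_1(z),-\gamma_2(z))$ and $(\gamma_2(z),\gamma_1(z))$ from \eqref{def epsilon12}. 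On $\partial U_0$ one has $\zeta\asymp(u_0\log u_0^{-1})^{-1}$ by \eqref{zeta exp}, \eqref{zeta0zeta1} and \eqref{expansion Omega}, so the expansion in $1/\zeta$ is genuine.

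Next I expand the inner bracket as $I+(\Phi_1-X)/\zeta+(\Phi_2-X\Phi_1)/\zeta^2+\mathcal{O}(\zeta^{-3})$ and track the action of conjugation by $B(z)$. Since $B$ is diagonal, $(BAB^{-1})_{12}=A_{12}B_{11}^{2}$ and $(BAB^{-1})_{21}=A_{21}B_{11}^{-2}$, while diagonal entries are unchanged. Substituting \eqref{expansion g1} and \eqref{expansion Omega} into $B=e^{(2\pi x/\Omega)(g_1-\tfrac12\log z)\sigma_3}$ shows that on $\partial U_0$, $B_{11}(z)$ has size $u_0^{-x}$ up to a factor $1+\mathcal{O}(1/\log u_0^{-1})$. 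The potentially dangerous matrix element $\Phi_{1,12}B_{11}^{2}/\zeta\asymp u_0^{1-2x}\log u_0^{-1}$ — which fails to be uniformly small as $x\to 1/2^{-}$ — is precisely the contribution the factor $(I-X/\zeta)$ in \eqref{defE} was designed to cancel: by the definition of $X$ together with \eqref{def Phi1}, $(\Phi_1-X)_{12}\equiv 0$ for $0\le x<1/2$ and symmetrically $(\Phi_1-X)_{21}\equiv 0$ for $-1/2\le x<0$. The factor $\widetilde e$ of \eqref{tilde e} is then exactly the square of the worst conjugation factor that survives multiplication by $R$ and $D(\infty)^{-1}$, and absorbs the $1+u_0^{1-2|x|}\log u_0^{-1}$ residual. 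Estimating the surviving off-diagonal entry $\sim u_0^{1+2|x|}\log u_0^{-1}$ and the diagonal entry $(\Phi_1-X)_{11}/\zeta$ via $\mu_k/\kappa_k=-\tfrac{k}{2}(\eta_1+\eta_2)=\mathcal{O}(k u_0s/n)$ (with $\eta_1+\eta_2=\mathcal{O}(u_0s/n)$ read off \eqref{zeta exp} at $z=a,\bar a$, and $k=\mathcal{O}(s/\log u_0^{-1})$) gives the claimed bound on $\Delta_1^{(1)}$. The $(\Phi_2-X\Phi_1)/\zeta^2$ contribution, combined with the subleading $\zeta_1 w$ correction in the expansion of $\zeta^{-1}$ from \eqref{zeta exp}, produces $\Delta_2^{(1)}$, while the residual constant-in-$z$ part coming from the expansion of $R(z)$ and of $(I-X/\zeta)$ about $z=1$ collects into $\Xi^{(1)}$.

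The chief obstacle is the careful bookkeeping: for each entry of the conjugated matrix one must track separately the powers $B_{11}^{\pm 2}\asymp u_0^{\mp 2x}$, the subleading $\zeta_1 w$ correction from \eqref{zeta exp}, and the variation of $R(z)$ about $z=1$, and then sort terms correctly among $\Delta_1^{(1)}$, $\Delta_2^{(1)}$, $\Xi^{(1)}$ and the final remainder of order $\widetilde e\,u_0^{-2|x|}(u_0\log u_0^{-1})^3+\widetilde e\,s^2u_0^4(\log u_0^{-1})^2$; the two pieces of this remainder come respectively from the $\mathcal{O}(\zeta^{-3})$ term in \eqref{Phiinf} and from the $(sw/(n\Omega))^2$ correction inside $\zeta$. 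A complementary algebraic check is the analyticity of $\mathcal{E}(z)$ in $U_0$: the factor $(I-X/\zeta(z))$ has a simple pole at $z=1$ (as $\zeta$ vanishes simply there by \eqref{zeta exp}), and the role of the prefactor $(I+F/(z-1))$ is precisely to cancel its residue. This gives a rank-one linear equation on the entries of $F$, whose unique solution is the explicit formula \eqref{def F}, as one verifies by direct substitution using the values $\gamma_1(1),\gamma_2(1)$ and $D(\infty)$ and the residue $(\zeta_0^{-1}n/s)$ of $1/\zeta$ at $z=1$. Combining these estimates yields the matching statement of the proposition.
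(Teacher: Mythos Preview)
Your overall structure—substituting the definitions of $P$ and $M$, reducing $PM^{-1}$ to a conjugation of $(I-X/\zeta)(I+\Phi_1/\zeta+\Phi_2/\zeta^2+\cdots)$ by $\widetilde E B$, and then expanding in $1/\zeta$—is correct and is exactly what the paper does. There are, however, two genuine gaps.

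\medskip
\textbf{Misidentification of $\Xi^{(1)}$.} You describe $\Xi^{(1)}$ as ``the residual constant-in-$z$ part coming from the expansion of $R(z)$ and of $(I-X/\zeta)$ about $z=1$''. That is not what $\Xi^{(1)}$ is. By \eqref{PM-12}, $\Xi^{(1)}(z)=-\zeta(z)^{-3}\widetilde E(z)B(z)\,X\Phi_2\,B(z)^{-1}\widetilde E(z)^{-1}$, i.e.\ precisely the $-X\Phi_2/\zeta^3$ cross term arising when you multiply $(I-X/\zeta)$ into the $\Phi_2/\zeta^2$ term. Since the proposition specifies $\Delta_1^{(1)},\Delta_2^{(1)},\Xi^{(1)}$ by \eqref{PM-12}, you must bound those specific objects, not regroup terms. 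Consequently your expansion $(I-X/\zeta)[I+\Phi_1/\zeta+\Phi_2/\zeta^2+\mathcal O(\zeta^{-3})]=I+(\Phi_1-X)/\zeta+(\Phi_2-X\Phi_1)/\zeta^2+\mathcal O(\zeta^{-3})$ loses $\Xi^{(1)}$ into the remainder, and the resulting $\mathcal O$-term is then too crude to match the stated remainder in the proposition.

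\medskip
\textbf{The case $k\to\infty$ is missing.} This is the more serious gap. The expansion \eqref{Phiinf} is for \emph{fixed} $k$: both the coefficients $\Phi_1,\Phi_2$ (through $\kappa_k,\mu_k,\nu_k$) and the implied constant in the $\mathcal O(\zeta^{-3})$ remainder depend on $k$. In the regime of the proposition one has $k=\tfrac{n\Omega}{2\pi}-x\asymp s/\log u_0^{-1}$, which may tend to infinity. Your own estimate $\mu_k/\kappa_k=-\tfrac{k}{2}(\eta_1+\eta_2)$ already exhibits growth in $k$, and the higher $\Phi_j$ hide further powers of $k$. To obtain bounds on $\Delta_1^{(1)},\Delta_2^{(1)},\Xi^{(1)}$ and on the remainder that are \emph{uniform in $k$}, the paper devotes the second half of the proof to the double scaling limit $k,\zeta\to\infty$, $\eta_1+\eta_2\to 0$ with $k/\zeta\to 0$ and $k(\eta_1+\eta_2)\to 0$, invoking the uniform large-degree asymptotics of Legendre polynomials from \cite{KMVV} to control $\Phi(\zeta;k)\zeta^{-k\sigma_3}-I-\Phi_1/\zeta-\Phi_2/\zeta^2$ with explicit $k$-dependence in the error. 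This step is entirely absent from your proposal, and without it the claimed uniform matching does not follow.

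\medskip
A minor correction: your estimate $\eta_1+\eta_2=\mathcal O(u_0 s/n)$ is not right. The dominant contribution to $\zeta(a)+\zeta(\bar a)$ comes from the $\zeta_1 w$ correction in \eqref{zeta exp}; since $\zeta_1\asymp\log u_0^{-1}$ by \eqref{zeta0zeta1} and \eqref{expansion Omega}, one gets $\eta_1+\eta_2=\mathcal O(u_0\log u_0^{-1})$, as in \eqref{alpha+beta}.
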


\begin{proof}
First, assume that $k$ is bounded. Since 
\begin{equation}
\zeta^k(z)e^{-n\left(g(z)-\frac{\log z}{2}\right)}=e^{-\frac{2\pi x}{\Omega}\left(g(z)-\frac{\log z}{2}\right)}, \qquad x=\frac{n\Omega}{2\pi}-k
\end{equation}
 we have on the boundary $\partial U$ that (recall \eqref{def g birth}, \eqref{DefM}, \eqref{Phiinf}, \eqref{defE})
\begin{equation}\label{PM-1}
P(z)M^{-1}(z)=E(z)\left(I+\frac{\Phi_1}{\zeta}+ \frac{\Phi_2}{\zeta^2}+\mathcal O (\zeta^{-3})\right)\left(I-\frac{X}{\zeta(z)} \right)  E^{-1}(z).
\end{equation} 
 It follows from \eqref{expansion g1}, \eqref{expansion Omega}, \eqref{def f} that 
\begin{equation}\label{order f}\begin{aligned}
\rho&=\mathcal O \left( u_0^{1-2|x|}\right),\\
f,F&=\mathcal O \left( su_0^{1-2|x|}/n\right),
\end{aligned}\end{equation}
as $u_0, s/n\to 0$ and $s,n \to \infty$.
Denote 
\begin{equation} \label{def Etilde}
\widetilde E(z)=\left(I+\frac{F}{z-1}\right)D(\infty)^{-1}\begin{pmatrix}
\gamma_1(z)&-\gamma_2(z)\\ \gamma_2(z) &\gamma_1(z)
\end{pmatrix} .
\end{equation}
From \eqref{order f}, the boundedness of $\widetilde h, \widetilde h^{-1}, \psi, \psi^{-1}, \gamma_1(z), \gamma_2(z)$ for $z\in \partial U_0$, and the fact that the radius of $U_0$ equals $\epsilon \frac{s}{n\log u_0^{-1}}$ for some $\epsilon>0$, we have 
\begin{equation} \label{tilde E}
\widetilde E(z)= \mathcal O(\sqrt{\widetilde e})
\end{equation}
 as $u_0\to 0$ and $s,n\to \infty$, uniformly for $z \in \partial U_0$. From \eqref{PM-1} and \eqref{def Etilde} it follows that $\Delta_1^{(1)},\Delta_2^{(1)}$ take the form
\begin{equation}\label{PM-12}\begin{aligned}
\Delta_1^{(1)}(z)&=\begin{cases}
\zeta^{-1}(z) \widetilde E(z)B(z)\begin{pmatrix} \Phi_{1,11}&0\\ \Phi_{1,21}&\Phi_{1,22} \end{pmatrix} B^{-1}(z) \widetilde E^{-1}(z) &0\leq x<1/2,\\
\zeta^{-1}(z)\widetilde E(z)B(z)(z)\begin{pmatrix} \Phi_{1,11}&\Phi_{1,12}\\ 0&\Phi_{1,22} \end{pmatrix} B^{-1}(z)\widetilde E^{-1}(z) &-1/2\leq x<0 ,\end{cases}\\
\Delta_2^{(1)}(z)&=\zeta^{-2}(z)\widetilde E(z)\widehat{g_1}^{x\sigma_3}(z)(
\Phi_2-X\Phi_1)
\widehat{g_1}^{-x\sigma_3}(z) \widetilde E^{-1}(z),\\
\Xi^{(1)}(z)&=-\zeta^{-3}(z)\widetilde E(z)\widehat{g_1}^{x\sigma_3}(z)
X\Phi_2
\widehat{g_1}^{-x\sigma_3}(z) \widetilde E^{-1}(z).
\end{aligned} \end{equation}
From \eqref{expansion g1a}--\eqref{expansion g1}, and recalling \eqref{expansion Omega} and the fact that $w=\mathcal O\left(\frac{1}{\log u_0^{-1}}\right)$ as $u_0\to 0$, it follows that 
\begin{equation}\label{order g1hat} \widehat{g_1}(z)=\mathcal O\left(u_0^{-1}\right)
\end{equation}
as $u_0\to 0$, uniformly on the boundary $\partial U_0$. Similarly, substituting \eqref{zeta0zeta1} into \eqref{zeta exp} and recalling \eqref{expansion Omega}, we have
\begin{equation} \label{order zeta}
 \zeta(z)=\mathcal O \left(\frac{1}{u_0\log u_0^{-1}}\right)
\end{equation}
as $u_0\to 0$, uniformly on the boundary $\partial U_0$. From \eqref{zeta exp} and \eqref{zeta0zeta1} we have 
\begin{equation} \label{alpha+beta}\zeta(a)+\zeta(\overline a)=\mathcal O(u_0\log u_0^{-1}),\end{equation}
as $u_0 \to 0$. Using \eqref{alpha+beta} it follows from \eqref{def Phi1} and \eqref{kappa mu nu} that
\begin{equation} \label{order Phi} \begin{aligned}
\Phi_{1,11},\Phi_{1,22},\Phi_{2,12},\Phi_{2,21}&=\mathcal O\left(u_0\log u_0^{-1}\right)\\
\Phi_{1,12},\Phi_{1,21},\Phi_{2,11},\Phi_{2,22}&=\mathcal O\left(1\right)
\end{aligned}\end{equation}
as $u_0\to 0$ (for finite $k$). Combining \eqref{tilde E}--\eqref{order Phi} the proposition is proven for bounded $k$.

Now consider $k\to \infty$.  
From Stirling's formula we have 
\begin{equation} \label{Stirling}
2\pi \kappa_k^2 \to 1
\end{equation}
as $k\to \infty$. Thus \eqref{order f} holds uniformly for $k\in \mathbb N$. We study the particular double scaling limit where $k,\zeta \to \infty$, and from \eqref{expansion u0},  \eqref{alpha+beta} we have that $\eta_1+\eta_2 \to 0$ in such a manner that $k/\zeta ,k(\eta_1+\eta_2)=\mathcal O(u_0s)\to 0$.
Thus using \eqref{Stirling} 
 we find that as $k\to \infty$ 
\begin{equation}
\Phi_1=\begin{pmatrix}
-\frac{k}{2} (\eta_1+\eta_2)& i+\mathcal O(k^{-1})\\-i+\mathcal O(k^{-1})&\frac{k}{2}(\eta_1+\eta_2)
\end{pmatrix}.
\end{equation} 
 We also find that as $k\to \infty$ and  $(\eta_1+\eta_2)\to 0$ such that $(\eta_1+\eta_2)k\to 0$  
\begin{equation}
\Phi_2=\begin{pmatrix}
-\frac{k}{8}(\eta_1^2+\eta_2^2)+\mathcal O(1)&\frac{ik}{2}(\eta_1+\eta_2)+\mathcal O(\eta_1+\eta_2)\\
\frac{ik}{2}(\eta_1+\eta_2)+\mathcal O(\eta_1+\eta_2)&\frac{k}{8}(\eta_1^2+\eta_2^2)+\mathcal O(1)
\end{pmatrix}.
\end{equation}
Thus we know the large $k,\zeta$ behaviour of $\zeta^{-1}\Phi_1, \, \zeta^{-2}\Phi_2$, and upon substituting into \eqref{PM-12} this yields \eqref{prop error term eqn}.
 It remains to calculate the error terms of order $\zeta^{-3}\Phi_3$ and higher, and in particular establish their behaviour as $k\to \infty$ with $\zeta$. 
 We rely here on the work by Kuijlaars, McLaughlin, Van Assche and Vanlessen in \cite{KMVV} where the authors found uniform error terms for the Legendre polynomials $L_k$ as $k\to \infty$.
In the remaining part of the proof of the proposition, we let $\widehat Y, \widehat R, \widehat N$ denote the functions $Y,R,N$ found in \cite{KMVV}. We compare $\widehat Y$ to $\Phi$ from \eqref{Phi Soln} in the present paper:
\begin{equation}
\Phi(\zeta)=2^{k\sigma_3}\widehat Y(y(\zeta)), \quad y(\zeta)=\frac{1}{2}\left(\zeta-\frac{\eta_1+\eta_2}{2}\right)
\end{equation}
where the parameter $n$ in \cite{KMVV} is set to be $k$ here. For $y$ bounded away from $[-1,1]$ it follows from equations (3.1), (4.2), (5.5), (7.1) in \cite{KMVV} that
\begin{equation} \label{YKMVV}
\begin{aligned}
\widehat Y(y)&=2^{-k\sigma_3}\widehat R(y)\widehat N(y)y^{k\sigma_3}\left(1+\left(1-y^{-2}\right)^{1/2}\right)^{k\sigma_3},\\
\widehat N(y)&=\begin{pmatrix}
\frac{1}{2}(a(y)+1/a(y))&\frac{1}{2i}(a(y)-1/a(y))\\
\\-\frac{1}{2i}(a(y)-1/a(y))
&\frac{1}{2}(a(y)+1/a(y))
\end{pmatrix},&&
a(y)=\left(\frac{y-1}{y+1}\right)^{1/4}.
\end{aligned}
\end{equation}
By the form of $\widehat N$ in \eqref{YKMVV} above and formula (8.11) in \cite{KMVV} it is clear that
\begin{equation} \nonumber
\widehat R(y(\zeta);k)\widehat N(y(\zeta))= I+\frac{\chi_1}{\zeta}+\frac{\chi_2}{\zeta^2}+\mathcal O(\zeta^{-3})
\end{equation}
as $\zeta \to \infty$, where $\chi_1$ and $\chi_2$ are bounded for $k\in \mathbb N$ and the $\mathcal O(\zeta^{-3})$ term is uniform for $k\in \mathbb N$.
As $\zeta,k \to \infty$ and $\eta_1+\eta_2 \to 0$ such that $k/\zeta\to 0$, $k(\eta_1+\eta_2)\to 0$, we have
\begin{equation} \label{YKMVV2}
\left(1+\left(1-y^{-2}\right)^{1/2}\right)^{\pm k}
=1\mp k\zeta^{-1}\left(\frac{\eta_1+\eta_2}{2}+\zeta^{-1}\right)+\mathcal O\left(k^2|\zeta|^{-2}(|\eta_1+\eta_2|+|\zeta|^{-1})^2\right). 
\end{equation}
 It follows from \eqref{YKMVV}-\eqref{YKMVV2} that as $\zeta,k \to \infty$ and $\eta_1+\eta_2 \to 0$ \\ such that $k/\zeta\to 0$, $k(\eta_1+\eta_2)\to 0$,
\begin{equation}\label{expansionPhi}
\begin{aligned}
\Phi(\zeta;k)&=
\left(I+\chi_1/\zeta+\chi_2/\zeta^2+\mathcal O\left(\zeta^{-3}\right)\right)\zeta^{k\sigma_3}
 \\& \times \Bigg(
  \begin{matrix}  1-k\zeta^{-1}\left(\frac{\eta_1+\eta_2}{2}+\zeta^{-1}\right)+\mathcal O\left(k^2|\zeta|^{-2}(|\eta_1+\eta_2|+|\zeta|^{-1})^2\right) 
  \\0\end{matrix}\\ &\quad \quad \quad
 \begin{matrix}0\\ 1+k\zeta^{-1}\left(\frac{\eta_1+\eta_2}{2}+\zeta^{-1}\right)+\mathcal O\left(k^2\zeta^{-2}(|\eta_1+\eta_2|+|\zeta|^{-1}))^2\right)
\end{matrix}\Bigg),
\end{aligned}\end{equation}
where, in particular, $\chi_1$ and $\chi_2$ are bounded for $k\in \mathbb N$ and the $\mathcal O(\zeta^{-3})$ term is uniform for $k\in \mathbb N$. By comparing \eqref{expansionPhi} with \eqref{Phiinf} it follows that as $\zeta,k \to \infty$ and $\eta_1+\eta_2 \to 0$ such that $k/\zeta\to 0$, $k(\eta_1+\eta_2)\to 0$,
\begin{multline}
\Phi(\zeta;k)\zeta^{-k\sigma_3}-I-\Phi_1/\zeta-\Phi_2/\zeta^2=\\ \mathcal O(\zeta^{-3})+\begin{pmatrix}\mathcal O(k^2(\zeta^{-4}+|\eta_1+\eta_2|\zeta^{-3}))&0\\0&\mathcal O(k^2(\zeta^{-4}+|\eta_1+\eta_2|\zeta^{-3}))\end{pmatrix}.
\end{multline}
\end{proof}

\subsection{Model RH problem $\Psi$}

The following RH problem has a solution in terms of Bessel functions.

\begin{itemize}
\item[(a)] $\Psi: \mathbb C \setminus \overline{\Gamma_{\Psi}} \to \mathbb C ^{2\times 2}$ is analytic, where $\Gamma_{\Psi} =\mathbb R^- \cup \Gamma^\pm_{\Psi}$, with $\Gamma^\pm_{\Psi}=\{ xe^{\pm \frac{ 2\pi}{3}i}: x\in \mathbb R ^+\}$, and with orientation taken in the direction of increasing real part.
\item[(b)] $\Psi$ has continuous boundary values $\Psi_+,\Psi_-$ on $\Gamma_{\Psi}$  satisfying the following jump conditions:
\begin{align}
\Psi_+(\zeta)&=\Psi_-(\zeta)\begin{pmatrix}0&1\\-1&0\end{pmatrix} \quad \textrm{for $\zeta \in \mathbb R^-$,}\\
\Psi_+(\zeta)&=\Psi_-(\zeta)\begin{pmatrix}1&0\\1&1\end{pmatrix}\quad \textrm{for $\zeta \in \Gamma^\pm_{\Psi}$.}
\end{align}
\item[(c)] As $\zeta \to \infty$, $\Psi$ has the following asymptotics:
\begin{equation} \label{Psiasym}
\Psi (\zeta)=\left(\pi \zeta^{\frac{1}{2}}\right)^{-\frac{\sigma_3}{2}} \frac{1}{\sqrt{2}} \begin{pmatrix} 1&i \\i &1\end{pmatrix} \left( I +\frac{1}{8\sqrt \zeta} \begin{pmatrix} -1&-2i \\-2i &1\end{pmatrix}
+\mathcal O\left( \zeta^{-1}\right)
\right)e^{\zeta^{\frac{1}{2}}\sigma_3}.  \end{equation}
\item[(d)] As $\zeta \to 0$, the behaviour of $\Psi$ is 
\begin{equation}
\Psi(\zeta)=\mathcal O(\log |\zeta|) .\end{equation}
\end{itemize}

This RH problem has a solution given in \cite{KMVV}, in terms of Bessel functions. For definitions and properties of Bessel functions see \cite{NIST}. 
We take the principal branches of the Bessel functions.
For $|\arg \zeta| <2\pi/3$, we have
\begin{equation}
\Psi(\zeta)=\begin{pmatrix} I_0(\zeta^{1/2})& \frac{i}{\pi}K_0(\zeta^{1/2})\\
\pi i \zeta^{1/2} I'_0(\zeta^{1/2})&-\zeta^{1/2}K_0'(\zeta^{1/2})\end{pmatrix}.
\end{equation}

\noindent For $2\pi/3<\arg \zeta<\pi$ we the solution is given by
\begin{equation}
\Psi(\zeta)=\frac{1}{2}\begin{pmatrix} 
 H_0^{(1)}(e^{\frac{\pi i}{2}}\zeta^{1/2})&
 H_0^{(2)}(e^{\frac{\pi i}{2}}\zeta^{1/2})
\\ \pi \zeta^{1/2}\left( H_0^{(1)}\right)'(e^{\frac{\pi i}{2}}\zeta^{1/2})&
\pi \zeta^{1/2}\left(H_0^{(2)}\right)' (e^{\frac{\pi i}{2}}\zeta^{1/2})
\end{pmatrix}.
\end{equation}

\noindent For $-\pi <\arg \zeta <-2\pi/3$ it is defined as
\begin{equation}
\Psi(\zeta)=\frac{1}{2}\begin{pmatrix}
H_0^{(2)}(e^{\frac{\pi i}{2}}\zeta^{1/2})&
-H_0^{(1)}(e^{\frac{\pi i}{2}}\zeta^{1/2})\\
-\pi\zeta^{1/2} \left(H_0^{(2)}\right)'(e^{\frac{\pi i}{2}}\zeta^{1/2})&
\pi \zeta^{1/2}\left( H_0^{(1)}\right)'(e^{\frac{\pi i}{2}}\zeta^{1/2})
\end{pmatrix} .
\end{equation}

We have the following useful asymptotics as $\zeta\to 0$ for $I_0$ and $K_0$:
\begin{align}
I_0(\zeta)&=1+\frac{\zeta^2}{4}+\frac{\zeta^4}{64} +\mathcal O(\zeta^6),\\
K_0(\zeta)&=-\log \frac{\zeta}{2} \left( 1+\frac{\zeta^2}{4}+\frac{\zeta^4}{64}+\mathcal O (\zeta^6)\right).
\end{align}

\subsection{Local parametrix at $b_1$ and $b_2$}

Let $U_1$ and $U_2$ be discs of radius $\frac{\epsilon s}{n}$ for some fixed but sufficiently small $\epsilon>0$, centered at $b_1$ and $b_2$ respectively. Recalling $w_j=\frac{n}{s}(z-b_j)$ for $j=1,2$, we have $|w_j|=\epsilon$ on $\partial U_j$.
For $z\in U_1$, define
\begin{equation}
\zeta_1(z)=\frac{n^2}{4}\phi(z)^2,
\end{equation}
where $\phi$ was defined in \eqref{def phi}.
 Recall the notation $B_j=\frac{n}{s}(1-b_j)$ for $j=1,2$. By \eqref{hexpb1} and \eqref{g1expb1} we have the following expansion of $\zeta_1(z)$ for $w_1$ in a neighbourhood of $0$:
\begin{equation}
\begin{aligned}
\zeta_1(z)&=s^2\zeta_{1,0}w_1\left(1+\mathcal O (w_1)\right)\\
\zeta_{1,0}&=\frac{B_2-B_1}{(b_1+(b_1b_2)^{1/2})^2} \left(1-\frac{n\Omega}{\pi s}\frac{b_1+(b_1b_2)^{1/2}}{B_2-B_1}\left(\frac{B_2}{B_1}\right)^{1/2}\right)^2 ,
 \end{aligned}\end{equation}
 and by considering \eqref{jump g} in addition, one verifies that $\zeta_1$ is analytic on $U_1$.

Recall from \eqref{value gb1}, \eqref{value hb1} that 
$ \phi_\pm(b_2)=\pm \Omega i$ and define
\begin{equation} \begin{aligned}
\widetilde \phi(z)&=\begin{cases}  \phi(z)-\Omega i& \textrm{for $z\in U_2$ and $z \in \mathcal D$,}\\
 \phi(z)+\Omega i& \textrm{for $z\in U_2$ and $z\notin \mathcal D$.}\end{cases}
\end{aligned} \end{equation}
where $\mathcal D$ denotes the unit disc. Then $\widetilde \phi:U \setminus J_2 \to \mathbb C$ is analytic, with a square root singularity at $b_2$.
We define the local variable
\[ \zeta_2(z)=\frac{n^2}{4} \widetilde \phi ^2(z) ,\]
which is analytic on $U_2$. Then, by \eqref{hw2} and \eqref{g1expb2}, $\zeta_2(z)$ has the following expansion at $w_2=0$:
\begin{equation}\begin{aligned}
\zeta_2(z)&=s^2\zeta_{2,0}w_2\left(1+\mathcal O (w_2)\right)\\
\zeta_{2,0}&=\frac{B_1-B_2}{(b_2+(b_1b_2)^{1/2})^2} \left(1-\frac{n\Omega}{\pi s}\frac{b_2+(b_1b_2)^{1/2}}{B_1-B_2}\left(\frac{B_1}{B_2}\right)^{1/2}\right)^2 ,
 \end{aligned}\end{equation}
 and by considering \eqref{jump g} in addition, one verifies that $\zeta_2$ is analytic on $U_2$.
 
The local parametrix is given by
\begin{align}P_{j}( z)&= E_j(z) \sigma_3^{j} \Psi(\zeta_j(z)) \sigma_3^{j}  e^{-\frac{n}{2}\phi(z)\sigma_3}\\
E_j(z)&=M(z)e^{\pm \frac{n}{2} \phi_+(b_j)\sigma_3}\frac{1}{\sqrt{2}}\begin{pmatrix} 1&(-1)^{j+1}\,i\\(-1)^{j+1}\, i &1 \end{pmatrix} \left(\pi \zeta_j(z) ^{1/2}\right)^{\frac{1}{2}\sigma_3},
\end{align}
on the $\pm$ side of the contour $C$, where $\phi_+(b_1)=0$ and $\phi_+(b_2)=\Omega i$.  As a consequence of the expansions of $\zeta_j$ above, we have
\begin{equation}
\zeta^{-1/4}_{j,-}\zeta^{1/4}_{j,+}=(-1)^j i,
\end{equation}
 and recalling the definition of $M$ in \eqref{DefM}, one may verify that $E_j$ is analytic on $U_j$. Recalling the jumps of $\phi$ in \eqref{phi+J}--\eqref{phi-J} and jumps of $g$ in \eqref{jump g}, one verifies that the jumps of $P_j$ match those of $S$ on $U_j$.

Since, recalling \eqref{def f}, $F=\mathcal O(s/n)$ as $s/n \to 0$ while $D(\infty)$ remains bounded, we have that $E_j$ is uniformly bounded on $\partial U_j$, and it follows that uniformly for $z\in \partial U_j$ we have the following matching condition of $M$ and $P_j$ 
\begin{equation}\label{matchingbj}
 \left(P_{j}M^{-1}\right)(z)=I+\Delta_{1}^{(b_j)}(z)+\mathcal O(1/s^2),
\quad \Delta_{1}^{(b_j)}(z)=\mathcal O(1/s),
 \end{equation}
as $s\to \infty$.  
A simple calculation yields
\begin{equation} \label{Delta11}
\Delta_{1}^{(b_1)}(z)=\frac{(B_2-B_1)^{1/2}}{16s\sqrt{\zeta_{1,0}}w_1} \left( I+\frac{F}{b_1-1}\right) D(\infty)^{-1}\begin{pmatrix}1&i\\i&-1\end{pmatrix} D(\infty)\left( I-\frac{F}{b_1-1}\right)+\mathcal O (1) \end{equation}
as $z \to b_1$, where the $\mathcal O(1)$ part is analytic on $U_1$. Similarly, as $z \to b_2$ we have:
\begin{equation} \label{Delta12}
\Delta_{1}^{(b_2)}(z)=\frac{(B_1-B_2)^{1/2}}{16s\sqrt{\zeta_{2,0}}w_2} \left( I+\frac{F}{b_2-1}\right) D(\infty)^{-1} \begin{pmatrix} 1&-i\\-i&-1\end{pmatrix} D(\infty)\left( I-\frac{F}{b_2-1}\right)+\mathcal O (1) ,\end{equation}
again with $\mathcal O(1)$ analytic.

\subsection{Small norm RH problem}
We define $R$ as follows:
\begin{equation}\label{def R}
R(z)=\begin{cases}
SM^{-1} & \textrm{for $z \in \mathbb C \setminus \overline{\left(\cup_{j=0}^2 U_j \right)}$,}\\
SP^{-1} & \textrm{for $z \in U_0$,}\\
SP^{-1}_j & \textrm{for $z \in U_j$ where $j=1,2.$}
\end{cases}
\end{equation}

Using standard small norm analysis, it follows from Proposition \ref{Proposition error term}, \eqref{Delta11}-\eqref{Delta12} and the fact that the contour lengths are $\partial U_j=\mathcal O(s/n)$ for $j=1,2$ as $s/n\to 0$ and $\partial U_0=\mathcal O(s/n(\log u_0^{-1}))$ as $s/n, u_0\to 0$ that given $\epsilon>0$,
\begin{equation}\label{Rn(0)}
R(z)=I+\mathcal O(1/n),
\end{equation}
uniformly for $|z-1|>\epsilon$.

If $z\in U_0$ then it follows from Proposition \ref{Proposition error term}, and \eqref{Delta11}-\eqref{Delta12} that
\begin{equation}
 R(z)=I+R_1(z)+R_2(z)+\mathcal O(||R_1||\,  ||R_2||+\widetilde e u_0^{-2|x|}( u_0\log u_0^{-1})^{3}+\widetilde e( s^2u_0^4(\log u_0^{-1})^2)) ,\end{equation}
where $||R_j||$ is the largest element of $R_j$ in absolute value for $j=1,2$, and
where the matrices $R_j$ are given by
\begin{equation}
\label{form R1}\begin{aligned}
R_1(z)&=\int_{\partial U_0} \frac{\Delta_1^{(1)}(u)}{(u-z)}\frac{du}{2\pi i}
+\sum_{j=1,2}\int_{\partial U_j} \frac{\Delta_1^{(b_j)}(u)}{(u-z)}\frac{du}{2\pi i},\\
R_2(z)&=
\int_{\partial U_0} \frac{R_{1-}(u)\Delta_1^{(1)}(u)+\Delta_2^{(1)}(u)+\Xi^{(1)}(u)}{u-z}\frac{du}{2\pi i}\\&+
\sum_{j=1,2}\int_{\partial U_j} \frac{R_{1-}(u)\Delta_1^{(b_j)}(u)+\Delta_2^{(b_j)}(u)}{u-z}\frac{du}{2\pi i},
\end{aligned}\end{equation}
with clockwise orientation taken in the integrals.
\section{Asymptotic analysis of the differential identity and correlation functions} \label{section asymptotic}
\subsection{Asymptotics of $\chi_n$}
From \eqref{Soln Y} we have
\begin{equation} \label{formula chin}
\chi_{n-1}^2=-(Y_{n})_{21}(0).
\end{equation}
By the transformations $Y=\widetilde g ^{-n\sigma_3}Te^{ng\sigma_3}$ and $ T= S = RM$ at $z=0$, (see \eqref{defT}, \eqref{defS}, \eqref{def R}) and recalling that $\chi_n$ is positive, we find from \eqref{formula chin} that
\begin{equation} \label{chigtilde}
\left| \chi_{n-1}^2\widetilde{g}^{-2n}\right|=\left|\widetilde{g}^{-n}e^{ng(0)}(R(0)M(0))_{21}\right|.
\end{equation}
From the definition of $g$ in \eqref{def g birth} and $\widetilde g$ in \eqref{def tilde g} it follows by computing $g_1(0)$, $h(0)$ in \eqref{g_1}, \eqref{def h} that
\begin{equation}
\left|\widetilde g^{-1}e^{g(0)}\right|=\left|\frac{1-\frac{b_1+b_2}{2}+(b_1b_2)^{1/4}\sqrt{|(1-b_1)(1-b_2)|}}{1-\frac{b_1^{-1}+b_2^{-1}}{2}+(b_1b_2)^{-1/4}\sqrt{|(1-b_1)(1-b_2)|}}\right|^{\Omega/2\pi}=1,
\end{equation}
so that
\begin{equation} \label{chigtilde2}
\left| \chi_{n-1}^2\widetilde{g}^{-2n}\right|=\left|(R(0)M(0))_{21}\right|.
\end{equation}
By \eqref{Rn(0)}
\begin{equation} \nonumber
R(0)=I+\mathcal O(1/n),
\end{equation}
as $n \to \infty$. 
Furthermore, we note that $F=\mathcal O (s/n)$ and that $\gamma_2(0)=-1+\mathcal O(s/n)$ as $s/n \to 0$, and substitute this into the definition of $M$ in \eqref{DefM} to find
\begin{equation} \label{RM(0)}
(R(0)M(0))_{21}=-(I+\mathcal O(s/n))e^{-\left<\frac{n\Omega}{2\pi}\right>(h(\infty)+h(0))}=-(I+\mathcal O(s/n))|e^{-\left<\frac{n\Omega}{2\pi}\right>h(\infty)}|^2,
\end{equation}
as $ s/n \to 0$. Substituting \eqref{RM(0)} into \eqref{chigtilde2} and recalling the notation \eqref{def f} it follows that
\begin{equation}
\left|\widetilde g^{-2n} \chi_{n-1}^2 \right|=(1+\mathcal O(s/n))\left| \widetilde h\right|^2,
\end{equation}
as $s/n\to 0$. We note that $\widetilde g= 1+\mathcal O(s/n(\log u_0^{-1}))$ as $s/n(\log u_0^{-1})\to 0$, and thus  we have
\begin{equation}\label{expansion chi g}
\left|\widetilde g^{-2n} \chi_n^2 \right|=(1+\mathcal O(s/n))\left| \widetilde h\right|^2,
\end{equation}
as $s/n\to 0$.

\subsection{Convergence of correlation functions} \label{CorrKernel}
Let $H_n(x,y)$ be the kernel built out of the orthogonal polynomials on $J$
\begin{equation}
H_n(x,y)=\frac{s}{\pi n}\sum_{j=0}^{n-1}\phi_j^{(n)}\left(\exp\left(\frac{2sxi}{n}\right)\right)\overline{\phi_j^{(n)}\left(\exp\left(\frac{2syi}{n}\right)\right)}.
\end{equation}
By the Christoffel-Darboux formula, $H_n$ also has the following useful form
\begin{equation} \label{CD kernel}
H_n(y_1,y_2)=\frac{s}{\pi n}\frac{z_1^nz_2^{-n}\phi_n^{(n)}(z_2)\overline{\phi_n^{(n)}(z_1)}-\overline{\phi_n^{(n)}(z_2)}\phi_n^{(n)}(z_1)}{1-z_2^{-1}z_1},
\end{equation}
where $z_j=\exp\left(\frac{2sy_ji}{n}\right)$ for $j=1,2$.
Let $\widehat K_n$ be defined similarly, but for the special case where $J=C$, namely:
\begin{equation} 
\widehat K_n(y_1,y_2)=\frac{s}{\pi n}\frac{z_1^{n/2}z_2^{-n/2}-z_2^{n/2}z_1^{-n/2}}{1-z_2^{-1}z_1}.
\end{equation}
Let $\rho^{(n)}_m$ be the $m$'th correlation function of the determinantal point process with correlation kernel $\widehat K_n$, and let $\rho^{(n,A)}_m$ be the $m$'th correlation function of the same process conditioned to have no points in $A=(\alpha ,-\nu)\cup(\nu,\beta)$.
Then 
\begin{equation}\begin{aligned}
\rho^{(n)}_m(x_1,\dots,x_m)&=\det(\widehat K_n(x_i,x_j))_{i,j=1}^m,\\
\rho^{(n,A)}_m(x_1,\dots,x_m)&=\det(H_n(x_i,x_j))_{i,j=1}^{m}.
\end{aligned}\end{equation}
 The two correlation functions are also related as follows:
\begin{equation} \label{rhona/rhon}
\rho_m^{(n,A)}(x_1,\dots,x_m)=\frac{\sum_{j=0}^\infty \frac{(-1)^j}{j!} \int_{A^j}\rho^{(n)}_{j+m}(x_1,\dots,x_{j+m})dx_{m+1}\dots dx_{j+m}}{D_n(J^{(n)})}.
\end{equation}
Similarly, we can write $\rho_m^A$ in terms of $\rho_m$ (both defined in Remark \ref{remark kernel}):
\begin{equation} \label{rhoA/rho}
\rho_m^A(x_1,\dots,x_m)=\frac{\sum_{j=0}^\infty \frac{(-1)^j}{j!} \int_{A^j}\rho_{j+m}(x_1,\dots,x_{j+m})dx_{m+1}\dots dx_{j+m}}{\det(I-K_s)_A}.
\end{equation}
The infinite sums \eqref{rhona/rhon} and \eqref{rhoA/rho} can be seen to converge for fixed $s$ by Hadamard's inequality.
Since 
\begin{equation} \left|\widehat K_n(x,y)-K_s(x,y)\right|=\mathcal O(1/n),
\end{equation}
as $n\to \infty$, it follows by formulas \eqref{rhoA/rho} and \eqref{rhona/rhon} that 
\begin{equation} \label{limrhonA}
|\rho_m^{(n,A)}(x_1,\dots,x_m)- \rho_m^{A}(x_1,\dots,x_m)|\to 0,
\end{equation}
as $n\to \infty$ for fixed $s$ (similarly to convergence of the determinants \eqref{Fredholm Toeplitz}, see the Appendix). 

By the definition of $Y$ in \eqref{Soln Y} and the formula for $H_n$ in \eqref{CD kernel} we have
\begin{equation}
H_n(y_1,y_2)=\frac{s\chi_n^2}{\pi n(1-z_1z_2^{-1})}(z_1^nz_2^{-n}Y_{11}(z_2)\overline{Y_{11}(z_1)}-\overline{Y_{11}(z_2)}Y_{11}(z_1)),
\end{equation}
where $z_j=\exp\left(\frac{2sy_ji}{n}\right)$ for $j=1,2$.
For the asymptotics of the correlation kernel we are less ambitious and choose not to proceed with all the detail in last section, and work with $|x|$ bounded away from $1/2$ as $n\to \infty$. Since the intention of $F$ and $X$ was to obtain uniform asymptotics up to the points $|x|=1/2$, we can let $F,X=0$ in $M$ and $P$ when we consider $|x|$ bounded away from $1/2$.
Then, in place of Proposition \ref{Proposition error term}, we have $PM^{-1}=I+o(1)$ as $u_0\to 0$ and $s,n \to \infty$ such that $s/n\to 0$ and $k\in \mathbb N, |x|<1/2$ remain fixed, uniformly on the boundary $\partial U_0$. Thus $R=I+o(1)$ in the same limit, and
 tracing back the transformations $Y\to T = S = RP$ we have, by \eqref{defT}, \eqref{defS}, \eqref{def R}, that for $z\in J_1$:
\begin{equation}\begin{aligned}
Y_{11}(z)&=\widetilde g^{-n}(RP)_{11}(z) e^{ng(z)}
\\&=\widetilde g^{-n} z^{n/2} \widetilde h^{-1}\left(\gamma_1B_{11}(1)\Phi_{11}(\zeta(z))-\gamma_2 B_{22}(1) \Phi_{21}(\zeta(z))
\right)(1+o(1)),
\end{aligned}\end{equation}
where $\widetilde g$ is given in \eqref{def tilde g}, and $\widetilde h$ in \eqref{def f}. 
 Thus it follows by \eqref{Phi Soln}, \eqref{expansion chi g}, the fact that $\widehat{g_1}$ is real to the main order and that
 \begin{equation}
 \gamma_1\overline{\gamma_2}=-1/2+o(1)
 \end{equation}
 as $s/n\to 0$, 
  that as $u_0\to 0$ and $s,n\to \infty$ such that $s/n\to 0$, while $k\in \mathbb N$ and $|x|<1/2$ remain fixed we have
\begin{equation}\label{Legendre kernel}
H_n(y_1,y_2)=\frac{\kappa_{k-1}}{\kappa_k}\frac{
L_k\left(2\zeta_0 y_1\right)L_{k-1}\left(2\zeta_0 y_2\right)-L_k\left(2\zeta_0 y_2\right)L_{k-1}\left(2\zeta_0 y_1\right)}{y_1-y_2}(1+o(1)).
\end{equation}
Since $\eta_1=-2+\mathcal O(u_0)$ and $\eta_2=2+\mathcal O(u_0)$ as $u_0\to 0$, it follows by continuity of the polynomials that $L_k^{(\eta_1,\eta_2)}$ can be replaced by $L_k^{(-2,2)}$ in \eqref{Legendre kernel} without modifying the error term. Similarly, by \eqref{expansion u0}, $2|\zeta_0|$ can be replaced by $4u_0^{-1}$ without modifying the error terms. Thus, combining \eqref{Legendre kernel} and \eqref{limrhonA}, we prove the statement in Remark \ref{remark kernel}.

\subsection{Expansion of Differential Identity}
In this section we start by writing the differential identity in a more convenient form, and find an expansion for it as $s, n \to \infty$ and $u_0\to0$ such that $su_0\to 0$ and $s/n\to0$, before proceeding to integrate it in Section \ref{integration of diffid}. Throughout the rest of the paper, the implicit constants in $\mathcal O(\dots)$ are independent of $s,u_0,n$. For example, if we write 
$\mathcal O(\frac{u_0}{s}+u_0^2),$ then in particular it is uniform in $n$, and if we write $\mathcal O(1)$, it means this expression is bounded in the double scaling limit described above.

Write the parametrix $P$ in \eqref{defP} in $U_0$ by grouping the factors as follows
\begin{equation}
P(z)=A(z)B(z)C(z)e^{-n\left(g(z)-\frac{1}{2}\log z\right)\sigma_3},
\end{equation}
where $A(z)$ and $C(z)$ are by
\begin{equation}
\begin{aligned}
A(z)&=\left(I+\frac{F}{z-1}\right) \widetilde h^{-\sigma_3} \begin{pmatrix}
\gamma_1(z)&-\gamma_2(z)\\\gamma_2(z)&\gamma_1(z)
\end{pmatrix}\\
C(z)&=\left(I-\frac{X}{\zeta(z)}\right)\Phi(\zeta(z))
\end{aligned}
\end{equation}
 By the transformations $Y=\widetilde g ^{-n\sigma_3}Te^{ng\sigma_3}$ and $ T= S = RM$ at $z=1$, (see \eqref{defT}, \eqref{defS}, \eqref{def R}) we have for $z\in U_0$
\begin{equation}\label{form Y}
Y_{11}(z)= \widetilde g^{-n}z^{n/2}\left[RABC\right]_{11},
\end{equation}
where 
\begin{equation} \label{funA} \begin{aligned}
A(z)&=A_1(z)+\frac{A_2(z)}{z-1},\\
A_1(z)&=\widetilde h^{-\sigma_3}\begin{pmatrix}\gamma_1(z) &-\gamma_2(z) \\ 
\gamma_2(z)  &\gamma_1(z)  \end{pmatrix},\\
A_2(z)&=f\widetilde h^{-\sigma_3}\begin{pmatrix} \gamma_1(z) +\gamma_2(z)  \psi &
-\gamma_2(z) + \gamma_1(z) \psi  \\ 
-\gamma_2(z)  -\gamma_1(z)  /\psi &
-\gamma_1(z)  +\gamma_2(z) /\psi \end{pmatrix},
\end{aligned}\end{equation}
and
\begin{equation}\label{funC}
\begin{aligned}
C(z)&=\begin{cases}\begin{pmatrix}\Phi_{11}(\zeta) -\frac{\Phi_{1,12}}{\zeta}\Phi_{21}(\zeta) &*\\ \Phi_{21}(\zeta)&*\end{pmatrix} & \textrm{for } 0\leq x<1/2,\\
\begin{pmatrix}\Phi_{11}(\zeta)&*\\ \Phi_{21}(\zeta)-\frac{\Phi_{1,21}}{\zeta}\Phi_{11}(\zeta)&*
\end{pmatrix}&\textrm{for } -1/2\leq x<0.
\end{cases}
\end{aligned}
\end{equation}
The expression for $C$ in \eqref{funC} is valid for $k\geq 1$, while for $k=0$, we have
\begin{equation}
C(z)=\begin{pmatrix}
1&*\\0&*
\end{pmatrix}.
\end{equation}
It follows that  
\begin{equation} \label{form Y'}
Y_{11}'(z)=\widetilde g^{-n}z^{n/2}\Bigg[ \frac{n}{2z} RA BC+R'A BC+RA'BC+RA B'C+RA BC'\Bigg]_{11},
 \end{equation}
where we suppress dependency on the variable $z$ on the right hand side, and where $'$ denotes differentiation with respect to $z$.
Substituting \eqref{expansion chi g}, \eqref{form Y}, \eqref{form Y'} into \eqref{diff id} we find that
\begin{equation}\label{formula F(z)}
F(z)=-2|\widetilde h|^2\Re \left[z \overline{ (RABC)_{11}}
\left( R'A BC+RA'BC+RA B'C+RA BC'\right)_{11}\right](1+\mathcal O(s/n)),
\end{equation}
for $z\in U_0$.
We would now like to evaluate $F(a)$ and $F(\overline a)$.
Since $\zeta\left(e^{i\theta}\right)$ is real for real $\theta$ on $U_0$, it follows that $\frac{d}{d\theta} \zeta\left(e^{i\theta}\right)$ is real. Consider the entries of $C$ and recall that $\Phi_{11}(x)$ is real for $x\in \mathbb R$, and that $\Phi_{21}(x)$ and $ \Phi_{1,12}$ are purely imaginary. By \eqref{funC}, $C_{11}(e^{i\theta})$ is real and so $z\frac{d}{dz} C_{11}(e^{i\theta})$ is purely imaginary, while $C_{21}(e^{i\theta})$ is purely imaginary and $z\frac{d}{dz} C_{21}(e^{i\theta})$ is real. From \eqref{g1 Sigma}, we recall that $B_{jj}\left(e^{i\theta}\right)$ is real for $j=1,2$. Thus $\overline{B_{11}}B_{22}=\overline{B_{22}}B_{11}=1$. From these observations, we find that
\begin{equation}
\label{form RABC'} \begin{aligned}
\Re \left[z\overline{\left( RABC\right)_{11}}
\left(RA BC'\right)_{11}\right]&= z(C_{11}C_{21}'-C_{11}'C_{21})\Re \left[ \overline{(RA )_{11}}\left(RA\right)_{12}\right],\\
\Re \left[z\overline{( RABC )_{11}}
\left(RA B'C\right)_{11}\right]&=z(B_{11}B_{22}'-B_{11}'B_{22})C_{11}C_{21}\Re \left[ \overline{(RA)_{11}}\left(RA\right)_{12}\right].
\end{aligned}
\end{equation}
When $k=0$, both expressions in \eqref{form RABC'} are equal to $0$.

\subsubsection{Evaluation of \eqref{form RABC'}}
Using the expansion for $\zeta$ from \eqref{zeta exp}--\eqref{zeta0zeta1}, and the fact that $\zeta(a)-\zeta(\overline a)=4$ from \eqref{def Omega}, we find that
\begin{equation}
\begin{aligned}
\zeta(e^{\pm i\theta_0})&=\pm 2\left(1\pm \zeta_1iu_0+\mathcal O\left(u_0\frac{s}{n}+u_0^2(\log u_0^{-1})^2\right)\right),\\
\frac{d}{dz}\zeta(z)\Bigg|_{z=e^{\pm i\theta_0}}&=-\frac{2in}{szu_0}\left(1\pm 2\zeta_1iu_0+\mathcal O\left(u_0\frac{s}{n}+u_0^2(\log u_0^{-1})^2\right)\right).
\end{aligned}
\end{equation}

We substitute the expansion of $\zeta$ from \eqref{zeta exp} into \eqref{funC}, and recall \eqref{def Phi1}-\eqref{Phi local}, to find
\begin{equation} \label{exp C11}\begin{aligned}
C_{11}(e^{\pm i \theta_0})&=\begin{cases}
\Phi_{11}(\zeta(e^{\pm i \theta_0}))\frac{k+1}{2k+1}(1 \pm iu_0 \zeta_1+\mathcal O(r_1)) &\textrm{for $0\leq x<1/2$,}\\
\Phi_{11}(\zeta(e^{\pm i \theta_0}))&\textrm{for $-1/2\leq x<0$,}
\end{cases}\\
C_{21}(e^{\pm i \theta_0})&=\begin{cases}
\Phi_{21}(\zeta(e^{\pm i \theta_0}))&\textrm{for $0\leq x<1/2$,}\\
\Phi_{21}(\zeta(e^{\pm i \theta_0}))\frac{k-1}{2k-1}(1\pm iu_0 \zeta_1+\mathcal O(r_1)) &\textrm{for $-1/2\leq x<0$,}
\end{cases}\\
r_1&=u_0(\log u_0^{-1})^2/s+(u_0\log u_0^{-1})^2+su_0/n.
\end{aligned}
\end{equation}
Using the expression
\begin{equation}\label{Phi'}
\begin{aligned}
\Phi'_{\zeta}(\eta_j)=\begin{pmatrix}
(-1)^j \frac{k(k+1)}{4}\Phi_{11}(\eta_j)&*\\
(-1)^j \frac{k(k-1)}{4}\Phi_{21}(\eta_j)&*
\end{pmatrix}\quad j=1,2.
\end{aligned}
\end{equation}
which follows from \eqref{Phi local}, we compute the following:
\begin{equation}\label{form CC'}
\begin{aligned}
&z\left(C_{11}(z)\frac{d}{dz}C_{21}(z)-\frac{d}{dz}C_{11}(z)C_{21}(z)\right)\Bigg|_{z=e^{\pi i \theta_0}}=\\
&=\begin{cases} \frac{2\pi k^2n}{su_0}\left(\frac{k+1}{2k+1}\pm 2i\zeta_1u_0\right) (1+\mathcal O ( r_1))& \textrm{for $0\leq x<1/2$,}\\
 \frac{2\pi k^2n}{su_0}\left(\frac{k-1}{2k-1}\pm 2i\zeta_1u_0\right) (1+\mathcal O ( r_1))& \textrm{for $-1/2\leq x<0$.}
\end{cases}
\end{aligned}\end{equation}
 We also have
\begin{equation} \label{form BB'}
\left(B_{11}(z)B_{22}'(z)-B_{11}'B_{22}(z)\right)\Big|_{z=e^{\pm i\theta_0}}=\mathcal O(n\log u_0^{-1}/s),
\end{equation}
 where the derivative is taken with respect to $z$.  
We now evaluate $RA$. 
Let $K$ denote the constant 
\begin{equation} \label{def K}
K=\frac{n}{s}\frac{u_1^{-1}-u_2^{-1}}{4}.
\end{equation}
We have the derivatives of $\gamma_1(e^{i\theta})$ and $\gamma_2(e^{i\theta})$ with respect to $\theta$ evaluated at $\theta=0$:
\begin{equation}
\begin{aligned} \label{epsilon'}
\frac{d}{d\theta}\gamma_1(1)&=-iK\gamma_2(1)(1+\mathcal O(s/n))\\
\frac{d}{d\theta}\gamma_2(1)&=iK\gamma_1(1)(1+\mathcal O(s/n))\\
\frac{d^2}{d\theta^2}\gamma_1(1)&=\frac{n^2}{s^22^4}\left(\gamma_1(1)(u_1^{-2}+u_2^{-2}-2u_1^{-1}u_2^{-1})-4i\gamma_2(1)(u_1^{-2}-u_2^{-2})\right)(1+\mathcal O(s/n))\\
\frac{d^2}{d\theta^2}\gamma_2(1)&=\frac{n^2}{s^22^4}\left(\gamma_2(1)(u_1^{-2}+u_2^{-2}-2u_1^{-1}u_2^{-1})+4i\gamma_1(1)(u_1^{-2}-u_2^{-2})\right)(1+\mathcal O(s/n)),\\
\frac{d^3}{d\theta^3}\gamma_1(1)&=\frac{n^3}{s^3}\Big(\frac{\gamma_1(1)}{8}(u_1^{-1}-u_2^{-1})(u_1^{-2}-u_2^{-2})
\\&\quad -\frac{3i}{2^6}\gamma_2(1)(11(u_1^{-3}-u_2^{-3})+u_1^{-1}u_2^{-2}-u_2^{-1}u_1^{-2})\Big)
(1+\mathcal O(s/n)),\\
\frac{d^3}{d\theta^3}\gamma_2(1)&=\frac{n^3}{s^3}\Big(\frac{\gamma_2(1)}{8}(u_1^{-1}-u_2^{-1})(u_1^{-2}-u_2^{-2})
\\&\quad +\frac{3i}{2^6}\gamma_1(1)(11(u_1^{-3}-u_2^{-3})+u_1^{-1}u_2^{-2}-u_2^{-1}u_1^{-2})\Big)
(1+\mathcal O(s/n)).
\end{aligned}
\end{equation}
Let $x_1$ and $x_2$ denote the following functions:
\begin{equation}\label{x1}
\begin{aligned}
x_1(z)&=\widetilde h ^{-1}R_{11}(z)\gamma_1(1)+\widetilde h R_{12}(z)\gamma_2(1),\\
x_2(z)&=-\widetilde h ^{-1}R_{11}(z)\gamma_2(1)+\widetilde h  R_{12}(z) \gamma_1(1).
\end{aligned}
\end{equation}
Then, using \eqref{funA}, \eqref{epsilon'} and \eqref{x1},  expand $A$. When $0\leq x<1/2$
\begin{equation}\label{formula A1}
\begin{aligned}
&(RA)_{11}(e^{i\theta})=\Bigg[ x_1(z)\left(1-Kf\left(\frac{\gamma_1}{\gamma_2}+\frac{\gamma_2}{\gamma_1}\right)\left( 1+\frac{n\theta}{2s}  (u_1^{-1}+u_2^{-1})\right)\right)+iKx_2(z)\theta
\\
&+\frac{n^2\theta^2}{s^22^5}\Bigg(x_1(z)(u_1^{-1}-u_2^{-1})^2-\frac{nf}{4s}x_1(z)\left(\frac{\gamma_1}{\gamma_2}+\frac{\gamma_2}{\gamma_1}\right)(11(u_1^{-3}-u_2^{-3})+u_1^{-1}u_2^{-2}-u_1^{-2}u_2^{-1})\\& +4ix_2(z)(u_1^{-2}-u_2^{-2})
\Bigg)\Bigg](1+\mathcal O (s/n))+\mathcal O \left(|\theta|^3n^3/s^3\right),\\
&(RA)_{12}(e^{i\theta})=\Bigg[\frac{ifx_1(z)}{\theta}\left(\frac{\gamma_1}{\gamma_2}+\frac{\gamma_2}{\gamma_1}\right)+x_2(z)+ix_1(z)K\theta\left(-1+K\frac{f}{2}\left(\frac{\gamma_1}{\gamma_2}+\frac{\gamma_2}{\gamma_1}\right)\right)\\
&+\frac{n^2\theta^2}{s^22^5}\Bigg(x_2(z)(u_1^{-1}-u_2^{-1})^2 -4ix_1(z)(u_1^{-2}-u_2^{-2})
\\&-\frac{2fnx_1(z)}{3s}\left(\frac{\gamma_1}{\gamma_2}+\frac{\gamma_2}{\gamma_1}\right)(u_1^{-3}+u_2^{-3}-u_1^{-1}u_2^{-2}-u_1^{-2}u_2^{-1})
\Bigg)\Bigg]
(1+\mathcal O (s/n))+\mathcal O(|\theta|^3n^3/s^3),
\end{aligned}
\end{equation} 
where we denote $\gamma_j=\gamma_j(1)$ for $j=1,2$.
When $-1/2\leq x<0$
\begin{equation}\label{formula A2}
\begin{aligned}
&(RA)_{11}(e^{i\theta})=\Bigg[\frac{ifx_2(z)}{\theta}\left(\frac{\gamma_1}{\gamma_2}+\frac{\gamma_2}{\gamma_1}\right)+x_1(z)+ix_2(z)K\theta\left(1+K\frac{f}{2}\left(\frac{\gamma_1}{\gamma_2}+\frac{\gamma_2}{\gamma_1}\right)\right)\\
&+\frac{n^2\theta^2}{s^22^5}\Bigg(x_1(z)(u_1^{-1}-u_2^{-1})^2-\frac{2fnx_2(z)}{3s}\left(\frac{\gamma_1}{\gamma_2}+\frac{\gamma_2}{\gamma_1}\right)
(u_1^{-3}+u_2^{-3}+u_1^{-1}u_2^{-2}+u_1^{-2}u_2^{-1})
\\& +4ix_2(z)(u_1^{-2}-u_2^{-2})
\Bigg)\Bigg](1+\mathcal O (s/n))+\mathcal O(|\theta|^3n^3/s^3),\\
&(RA)_{12}(e^{i\theta})=
\Bigg[x_2(z)\left(1+Kf\left(\frac{\gamma_1}{\gamma_2}+\frac{\gamma_2}{\gamma_1}\right)\left( 1+\frac{n\theta}{2s}  (u_1^{-1}+u_2^{-1})\right)\right)-iKx_1(z)\theta
\\&+\frac{n^2\theta^2}{2^5s^2}\Bigg(x_2(z)(u_1^{-1}-u_2^{-1})^2 
+\frac{nf}{4s}x_2(z)\left(\frac{\gamma_1}{\gamma_2}+\frac{\gamma_2}{\gamma_1}\right)(11(u_1^{-3}-u_2^{-3})+u_1^{-1}u_2^{-2}-u_1^{-2}u_2^{-1})\\&-4ix_1(z)(u_1^{-2}-u_2^{-2})\Bigg)\Bigg]
(1+\mathcal O (s/n))+\mathcal O(|\theta|^3n^3/s^3),
\end{aligned}
\end{equation} 
 We note that
\begin{equation} \label{epsilon1/2}
\begin{aligned}
|\gamma_1|^2,\, |\gamma_2|^2&= \frac{1}{4}\left(
\left|\frac{u_1}{u_2}\right|^{1/2}+\left|\frac{u_2}{u_1}\right|^{1/2}\right)(1+\mathcal O(s/n))
 \\
\overline{\gamma_1}/\overline{\gamma_2}, \,\gamma_2/\gamma_1&=\frac{-2\sqrt{|u_1u_2|}-i(u_1+u_2)}{u_1-u_2}(1+\mathcal O(s/n)).
\end{aligned}
\end{equation}
 Recalling \eqref{def f}, \eqref{def epsilon12}, \eqref{def K}, it is readily checked that
\begin{equation} \label{signfgamma}
f\left( \frac{\gamma_1}{\gamma_2}+\frac{\gamma_2}{\gamma_1}\right)\in \mathbb R,
\end{equation}
and that as $n,s\to \infty, s/n\to 0$,
\begin{equation}\label{fepsilonK} 
f\left( \frac{\gamma_1}{\gamma_2}+\frac{\gamma_2}{\gamma_1}\right)K=
\begin{cases}
\frac{\rho}{1 + \rho}(1+\mathcal O(s/n)) &\textrm{for $0\leq x<1/2$,}\\
\frac{\rho}{1 - \rho}(1+\mathcal O(s/n)) &\textrm{for $-1/2\leq x<0$.}
\end{cases}
\end{equation}
From \eqref{formula A1}, \eqref{formula A2}, \eqref{signfgamma}--\eqref{fepsilonK} it follows that
\begin{equation}\label{RA11 RA12}
\begin{aligned}
\Re \left[ (\overline{RA)_{11}(z)}(RA)_{12}(z)\right]&=\Re \left[\overline{x_1(z)}x_2(z)\right]\left(1+\mathcal O(s/n+u_0^2)\right).
\end{aligned}\end{equation}
Thus, from \eqref{form RABC'} and \eqref{form CC'},
\begin{equation}\label{formula RABC'}
\begin{aligned}
&F_0(e^{\pm i\theta_0})=\Re \left[z\overline{\left( RABC\right)_{11}}
\left(RA BC'\right)_{11}\right]=\Re \left[\overline{x_1(e^{\pm i\theta_0})}x_2(e^{\pm i\theta_0})\right]\times
\\&\times \begin{cases} \frac{2\pi k^2n}{su_0}\left(\frac{k+1}{2k+1}\pm i\zeta_1u_0\right) (1+\mathcal O (r_1))& \textrm{for $0\leq x<1/2$,}\\
 \frac{2\pi k^2n}{su_0}\left(\frac{k-1}{2k-1}\pm i\zeta_1u_0\right) (1+\mathcal O (r_1))& \textrm{for $-1/2\leq x<0$,}
\end{cases}
\end{aligned}
\end{equation}
 where $r_1$ was defined in \eqref{exp C11}. 
\begin{Prop} \label{Prop x1x2}
We have
\begin{equation} \label{barx1x2}
\Re \left[ \overline{x_1}x_2 \right]=\frac{|\widetilde h |^{-2}}{2}+\mathcal O\left(s/n+\left(\widetilde e u_0^2s\log u_0^{-1}+\widetilde e u_0^{1+2|x|}\log u_0^{-1}+s^{-1}\right)^2\right),
\end{equation}
and $|\widetilde h|^{-1}=\mathcal O(1)$, where $\widetilde h$ was given in \eqref{def f} and $\widetilde e$ was defined in \eqref{tilde e}.
\end{Prop}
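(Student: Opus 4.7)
My plan is to expand $\overline{x_1}x_2$ directly from (\ref{x1}), isolate the leading contribution in terms of $\widetilde h$ and $\gamma_j(1)$, and then bound the remainder using the smallness of $R-I$ from the small-norm analysis in (\ref{form R1}). Explicitly, using (\ref{x1}),
\begin{equation*}
\overline{x_1}x_2 \;=\; -|\widetilde h|^{-2}|R_{11}|^2\,\overline{\gamma_1}\gamma_2 \;+\; \frac{\widetilde h}{\overline{\widetilde h}}\,\overline{R_{11}}R_{12}|\gamma_1|^2 \;-\; \frac{\overline{\widetilde h}}{\widetilde h}\,R_{11}\overline{R_{12}}|\gamma_2|^2 \;+\; |\widetilde h|^{2}|R_{12}|^2\,\overline{\gamma_2}\gamma_1,
\end{equation*}
where I write $\gamma_j = \gamma_j(1)$. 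Taking real parts splits the quantity into (i) the diagonal contribution $-|\widetilde h|^{-2}|R_{11}|^2\Re[\overline{\gamma_1}\gamma_2]$, (ii) a cross contribution of the form $\Re[A]\,(|\gamma_1|^2-|\gamma_2|^2)$ with $A=(\widetilde h/\overline{\widetilde h})\overline{R_{11}}R_{12}$ (since the $|\gamma_j|^2$ are real), and (iii) a quadratic contribution $|\widetilde h|^{2}|R_{12}|^2\Re[\overline{\gamma_2}\gamma_1]$.

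Next I would evaluate $\Re[\overline{\gamma_1}\gamma_2]$ directly from the explicit formulas in (\ref{epsilon1/2}). Writing $\overline{\gamma_1}\gamma_2 = |\gamma_1|^2(\gamma_2/\gamma_1)$ and substituting the two approximations listed there, the $\sqrt{|u_1u_2|}$ factors cancel and the $u_1-u_2$ factors cancel, leaving $\Re[\overline{\gamma_1}\gamma_2] = -\tfrac{1}{2} + \mathcal O(s/n)$. The same source gives $|\gamma_1|^2-|\gamma_2|^2 = \mathcal O(s/n)$, so the cross contribution (ii) is $\mathcal O((s/n)\|R-I\|)$, which is absorbed into the $\mathcal O(s/n)$ term in the claim.

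For the quadratic contribution (iii) and the error in (i), I would invoke the small-norm bounds from (\ref{form R1}), Proposition \ref{Proposition error term}, and (\ref{matchingbj}), which together give $\|R(z)-I\| = \mathcal O(E)$ at $z=a,\overline a$, where $E = \widetilde e u_0^2 s\log u_0^{-1} + \widetilde e u_0^{1+2|x|}\log u_0^{-1} + s^{-1}$. Hence (iii) is immediately $\mathcal O(E^{2})$. Boundedness of $|\widetilde h|^{-1}$ follows from its explicit form in (\ref{def f}), since $|x|\le 1/2$ and $h(\infty)$ stays bounded in the scaling limit under consideration.

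The main technical obstacle is the linear contribution to (i): the naive expansion $|R_{11}|^2-1 = 2\Re(R_{11}-1)+|R_{11}-1|^2$ would produce a linear-in-$E$ error, which is weaker than the squared error claimed. To reach $\mathcal O(E^{2})$ I would exploit $\det R \equiv 1$ (inherited from $\det S = \det M = \det P = 1$), which forces $\mathrm{tr}(R-I) = \mathcal O(E^{2})$, together with the explicit real/imaginary structure of $\Delta_{1}^{(b_j)}$ recorded in (\ref{Delta11})--(\ref{Delta12})---where the relevant conjugated matrix $D(\infty)^{-1}\bigl(\begin{smallmatrix}1 & i\\ i & -1\end{smallmatrix}\bigr)D(\infty)$ has purely real diagonal entries $\pm 1$---and the analogous structure of $\Delta_1^{(1)}$ from (\ref{PM-12}). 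Combining these to show that $\Re(R_{11}(a)-1) = \mathcal O(E^{2})$ (equivalently, that the diagonal of $R_1$ at $z=a$ is purely imaginary to leading order) then yields the required bound on (i) and completes the identification of the leading term as $|\widetilde h|^{-2}/2$.
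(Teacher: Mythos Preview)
Your overall strategy---expand $\overline{x_1}x_2$ from (\ref{x1}), extract the leading term via $\Re[\overline{\gamma_1}\gamma_2]=-\tfrac12+\mathcal O(s/n)$ and $|\gamma_1|^2-|\gamma_2|^2=\mathcal O(s/n)$, and then kill the linear-in-$E$ contribution coming from $|R_{11}|^2-1$---is exactly the paper's strategy, and you have correctly identified the crux: one must show that $\Re(R_{1,11})$ (and the analogous piece of $R_{2,11}$) is small, not merely $\mathcal O(E)$.

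However, your proposed mechanism for this last step does not close. The identity $\det R\equiv 1$ only gives $\mathrm{tr}(R-I)=\mathcal O(E^2)$, i.e.\ $\Re r_{11}+\Re r_{22}=\mathcal O(E^2)$; this is one real equation for two unknowns and cannot by itself force $\Re r_{11}=\mathcal O(E^2)$. Your structural observation that $D(\infty)^{-1}\bigl(\begin{smallmatrix}1&i\\i&-1\end{smallmatrix}\bigr)D(\infty)$ has real diagonal $(1,-1)$ is true but is only one ingredient: the contribution to $R_{1,11}$ from $\Delta_1^{(b_j)}$ also carries the scalar prefactor $\frac{(B_2-B_1)^{1/2}}{16s\sqrt{\zeta_{1,0}}}$, the conjugation by $(I+F/(b_j-1))$ (which alters the diagonal), and the factor $1/(b_j-e^{\pm i\theta_0})$; the reality of the bare diagonal alone says nothing about the phase of the end result.

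What the paper actually does (Section~\ref{consideration of error terms}) is a direct computation. For the $\Delta_1^{(b_j)}$ contribution it evaluates the residue $C_{\Delta,j}$ explicitly (equation (\ref{CDelta})), checks that $f$, $\psi+\psi^{-1}$, and the denominator are all real to $\mathcal O(s/n)$ while the bracket $[i+\cdots]$ is purely imaginary, and concludes $\Re C_{\Delta,j}=\mathcal O(s/n)$. For the $\Delta_1^{(1)}$ contribution the analysis is more involved: one decomposes $\Delta_1^{(1)}$ via the auxiliary functions $D(z)$, $E^{(\pm)}(z)$ of (\ref{CandD}), tracks the parity of $x_0,y_0,\zeta,\Phi_{1,jk}$ under complex conjugation on the unit circle, expands the integrand in a Laurent series about $z=1$, and computes the residues (equation (\ref{residue int Delta})). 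The upshot is $\Re R_{1,11}=\mathcal O(s/n)$ and $\Re R_{2,11}=\mathcal O(s/n+E^2)$, which is what feeds into (\ref{error x1x2}). Your plan should replace the $\det R=1$ argument with this direct phase analysis; the $\Delta_1^{(1)}$ piece in particular needs more than ``analogous structure''.
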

The main term of Proposition \ref{Prop x1x2} is easy to calculate from \eqref{x1} and \eqref{def epsilon12}, but we defer the rest of the proof to Section \ref{consideration of error terms}.

From \eqref{Phi local}, \eqref{exp C11} we obtain that
\begin{equation}\label{orderC}
C_{11}(e^{\pm i\theta_0}),C_{21}(e^{\pm i\theta_0})=\bigO(\sqrt{k}).
\end{equation}
Recall that $k=\bigO(s/\log u_0^{-1})$.
Combining  \eqref{form RABC'}, \eqref{form BB'}, \eqref{RA11 RA12}, \eqref{orderC}, and using Proposition \ref{Prop x1x2} gives us
\begin{equation} \label{formula RAB'C}
\Re \left[z\overline{(RABC)_{11}}(RAB'C)_{11}
\right]=\mathcal O(n).
\end{equation}
\subsubsection{Evaluation of \eqref{formula F(z)}}
Suppressing $z$ dependence, we write 
\begin{equation} \label{form RA'BC}
\begin{aligned}
\Re \left[ z \overline{(RABC)_{11}}(RA'BC)_{11}(z)\right]&=F_1+F_2+F_3+F_4,\\
F_1&=B_{11}^2C_{11}^2 \Re \left[ z\overline{(RA)_{11}}(RA')_{11}\right],\\
F_2&=\Re \left[ zC_{11}\overline{(RA)_{11}}C_{21}(RA')_{12}\right],\\
F_3&=-\Re\left[ zC_{11}(RA')_{11}C_{21}\overline{(RA)_{12}}\right],\\
F_4&=- B_{22}^2C_{21}^2\Re \left[z\overline{(RA)_{12}}(RA')_{12}\right].
\end{aligned}
\end{equation}
Recall that $K=\mathcal O(n/s)$,  and that $\theta_0=u_0\frac{s}{n}$. From \eqref{formula A1} and \eqref{formula A2} we obtain that for $k\geq 1$
\begin{equation} \label{formula F1-F4}
\begin{aligned}
\frac{F_1(e^{\pm i\theta_0})}{B_{11}^2C_{11}^2}&=\begin{cases} \Re \left[ \overline{x_1} x_2\right]\left[
\frac{K}{1+ \rho}
\right]+\mathcal O\left(u_0n/s+1\right)&\textrm{for $0\leq x<1/2$,}\\
- \Re \left[ \overline{x_1} x_2\right]\left[
\frac{f}{\theta_0^2} \left(\frac{\gamma_1}{\gamma_2}+\frac{\gamma_2}{\gamma_1}\right)\right]+\mathcal O (n/s)&\textrm{for $-1/2\leq x<0$,}
\end{cases} \\
F_2(e^{\pm i\theta_0})&+F_3(e^{\pm i\theta_0})=\pm iC_{11}C_{21} \Re \left[ \overline{x_1} x_2\right]\left(\frac{\gamma_1}{\gamma_2}+\frac{\gamma_2}{\gamma_1}\right)\left[\frac{2fK}{\theta_0}+\mathcal O (n/s)
\right],\\
\frac{F_4(e^{\pm i\theta_0})}{B_{22}^2C_{21}^2}&=\begin{cases} \Re \left[ \overline{x_1} x_2\right]\left[
\frac{f}{\theta^2_0} \left(\frac{\gamma_1}{\gamma_2}+\frac{\gamma_2}{\gamma_1}\right)\right]+\mathcal O (n/s)&\textrm{for $0\leq x<1/2$,}\\
\Re \left[ \overline{x_1} x_2\right]\left[
\frac{K}{1- \rho}
\right]+\mathcal O\left(u_0n/s+1\right)&\textrm{for $-1/2\leq x<0$.}
\end{cases}
\end{aligned}
\end{equation}
When $k=0$, we have $F_2,F_3,F_4=0$, while $F_1$ is as in \eqref{formula F1-F4} for $0< x <1/2$.

From \eqref{expansion Omega} we have that $\Omega=\bigO \left( \frac{s}{n\log u_0^{-1}}\right)$, and thus substituting $k+x=\frac{n\Omega}{2\pi}$ into \eqref{expansion u0} we have
\begin{equation}\label{formu0}
\frac{8}{u_1^{-1}-u_2^{-1}}e^{-\frac{s\sqrt{|u_1u_2|}}{2(k+x)}}=u_0\left(1+\mathcal O\left(
\frac{s\log u_0^{-1}}{n}+u_0^2(\log u_0^{-1})^2
\right)\right).
\end{equation}

Recalling the expansion of $g_1$ in \eqref{expansion g1a}--\eqref{expansion g1} and the definition of $B$ in \eqref{defE}, and substituting the values of $\kappa_j$ and $\Phi$ from \eqref{kappa mu nu}--\eqref{Phi local} into the expansion of $C$ from \eqref{exp C11}, we obtain that for $k\geq 1$,
\begin{equation} \label{formula B11C11}
\begin{aligned}
(B_{11}^2C_{11}^2)(e^{\pm i\theta_0})&=\begin{cases}
e^{2x\frac{s\sqrt{|u_1u_2|}}{2(k+x)}}
\frac{(k+1)^2}{4(2k+1)\kappa_k^2}\left(1+\mathcal O (r_2)\right) &\textrm{for $0\leq x<1/2$,}\\
e^{2x\frac{s\sqrt{|u_1u_2|}}{2(k+x)}}
\frac{(2k+1)}{4\kappa_k^2}\left(1+\mathcal O (r_2)\right) &\textrm{for $-1/2\leq x<0$,}
\end{cases} \\
(B_{22}^2C_{21}^2)(e^{\pm i\theta_0})&=\begin{cases}-e^{-2x\frac{s\sqrt{|u_1u_2|}}{2(k+x)}}
\pi^2\kappa_{k-1}^2(2k-1)
\left(1+\mathcal O (r_2)\right)&\textrm{for $0\leq x<1/2$,}\\
-e^{-2x\frac{s\sqrt{|u_1u_2|}}{2(k+x)}}
\frac{\pi^2\kappa_{k-1}^2(k-1)^2}{(2k-1)}
\left(1+\mathcal O (r_2)\right)
&\textrm{for $-1/2\leq x<0$,}
\end{cases}\\
\left(C_{11}C_{21}\right)\left(e^{\pm i \theta_0}\right)&=\begin{cases}
\mp \pi i k\frac{k+1}{2k+1}\left(1+\mathcal O (u_0\log u_0^{-1}+s/n)\right)&\textrm{for $0\leq x<1/2$,}\\
\mp \pi i k \frac{k-1}{2k-1}\left(1+\mathcal O (u_0\log u_0^{-1}+s/n)\right)&\textrm{for $-1/2\leq x<0$,}
\end{cases}\\
r_2&=u_0\log u_0^{-1}+\log u_0^{-1}s/n.
\end{aligned}\end{equation}
 When $k=0$ (and $0< x<1/2$) we have
 \begin{equation}
( B_{11}^2C_{11}^2)(e^{\pm i\theta_0})=e^{s\sqrt{|u_1u_2|}}(1+\bigO(u_0\log u_0^{-1}+s^2/n)).
 \end{equation}

Substituting \eqref{fepsilonK}, \eqref{barx1x2}, \eqref{formula B11C11} into \eqref{formula F1-F4} we find that 
\begin{equation} \label{eqnF1-F4}
\begin{aligned}
F_1&=\begin{cases}
\frac{|\widetilde h|^{-2}}{8}\frac{K}{1+\rho}\frac{(k+1)^2}{(2k+1)\kappa_k^2}e^{2x\frac{s\sqrt{|u_1u_2|}}{2(k+x)}}(1+\mathcal O(r_2+s^{-2}))
&\textrm{for $0\leq x<1/2$,}\\
-\frac{|\widetilde h|^{-2}}{8}\frac{\rho}{\theta_0^2K(1-\rho)}\frac{(2k+1)}{\kappa_k^2}e^{2x\frac{s\sqrt{|u_1u_2|}}{2(k+x)}}(1+\mathcal O(r_2+s^{-2}))
&\textrm{for $-1/2\leq x<0$,}
\end{cases}\\
F_2+F_3&=\begin{cases} |\widetilde h|^{-2} \frac{\rho}{1+\rho}\frac{1}{\theta_0} \frac{\pi k (k+1)}{2k+1}(1+\mathcal O(u_0\log u_0^{-1}+\frac{s}{n}+u_0^{2|x|}+s^{-2}))
&\textrm{for $0\leq x<1/2$,}\\
 |\widetilde h|^{-2} \frac{\rho}{1-\rho}\frac{1}{\theta_0} \frac{\pi k (k-1)}{2k-1}(1+\mathcal O(u_0\log u_0^{-1}+\frac{s}{n}+u_0^{2|x|}+s^{-2}))
&\textrm{for $-1/2\leq x<0$,}
\end{cases}\\
F_4&=
\begin{cases}
-\frac{|\widetilde h|^{-2}\rho\pi^2 \kappa_{k-1}^2(2k-1)}{2(1+\rho)K\theta_0^2}e^{-2x\frac{s\sqrt{|u_1u_2|}}{2(k+x)}}
(1+\mathcal O(r_2+s^{-2}))
&\textrm{for $0\leq x<1/2$,}\\
-\frac{|\widetilde h|^{-2}K}{2(1-\rho)}\frac{\pi^2 \kappa_{k-1}^2(k-1)^2}{(2k-1)}e^{-2x\frac{s\sqrt{|u_1u_2|}}{2(k+x)}}
(1+\mathcal O(r_2+s^{-2}))
&\textrm{for $-1/2\leq x<0$,}
\end{cases}
\end{aligned}
\end{equation} 
when evaluated at $e^{\pm i\theta_0}$.
When $k=0$ (and $0<x<1/2$), we have $F_2,F_3,F_4=0$, while $F_1$ is given by 
\begin{equation}
F_1(e^{\pm i\theta_0})=\frac{|\widetilde h|^{-2}K}{2(1+\rho)}e^{s\sqrt{|u_1u_2|}}(1+\bigO(u_0\log u_0^{-1}+s^2/n+s^{-2})).
\end{equation}

Finally we find the order of the term which includes $R'$ in \eqref{formula F(z)}. 
 Using the equation for $R_1$ in \eqref{form R1}, it is readily seen that
\begin{equation}\label{bound dzR}\begin{aligned}
\left|\frac{d}{dz}R(e^{\pm i \theta_0})\right|&\leq
\left(\sup_{u\in \partial U_0}|\Delta_1^{(1)}(u)|\right)\int_{\partial U_0} \frac{du}{2\pi |u-e^{\pm i \theta_0}|^2}
\\&+\sum_{j=1,2}\left(\sup_{u\in \partial U_j}|\Delta_1^{(b_j)}(u)|\right)\int_{\partial U_j} \frac{du}{2\pi |u-e^{\pm i \theta_0}|^2}.\end{aligned}\end{equation}
We recall that the radius of $U_0$ is of size $\mathcal O\left(\frac{s}{n\log u_0^{-1}}\right)$, and that the radius of $U_j$ is of size $\mathcal O(s/n)$ for $j=1,2$. Thus
\begin{equation} \nonumber
\int_{\partial U_0} \frac{du}{2\pi |u-e^{\pm i \theta_0}|^2}=\mathcal O\left(\frac{n}{s\log u_0^{-1}}\right),\qquad \int_{\partial U_j} \frac{du}{2\pi |u-e^{\pm i \theta_0}|^2}=\bigO(n/s) \quad \textrm{for } j=1,2.
\end{equation}
 Substituting the asymptotics for $\Delta_1^{(1)}$ from \eqref{prop error term eqn} and $\Delta_1^{(b_1)},\, \Delta_1^{(b_2)}$ from \eqref{matchingbj} into \eqref{bound dzR}, it follows that
 \begin{equation}
 \frac{d}{dz}R(e^{\pm i \theta_0})=\mathcal O(\widetilde e u_0^{1+2|x|}(\log u_0^{-1})^2n/s+\widetilde e n(u_0\log u_0^{-1})^2+n/s^2).
 \end{equation}
  Thus, we have from \eqref{x1}, since $\widetilde h, \,\, \widetilde h^{-1}, \, \gamma_j(1)=\bigO(1)$, that also
\begin{equation}\label{order derivative x1}
\frac{d}{dz}x_1(e^{\pm i \theta_0})=\mathcal O(\widetilde e u_0^{1+2|x|}(\log u_0^{-1})^2n/s+\widetilde e n(u_0\log u_0^{-1})^2+n/s^2).
\end{equation}
The formula for $R'A$ is given by \eqref{formula A1}, \eqref{formula A2} but with $x_1,x_2$ replaced by the derivatives $x_1',x_2'$. Recall that 
\begin{equation}\begin{aligned}C_{11},C_{21}&=\mathcal O\left(\sqrt{k}\right)=\mathcal O\left(\sqrt{s/\log u_0^{-1}}\right),\\
B_{11}&=\mathcal O(u_0^{-x}),\quad B_{22}=\mathcal O(u_0^{x}), \quad 
K=\mathcal O(n/s), \quad f=\mathcal O\left(\frac{s}{n}u_0^{1-2|x|}\right).\end{aligned}\end{equation} 
 From \eqref{order derivative x1} we obtain that 
\begin{multline} \label{order R'}
\Re \left[ \left(\overline{(RABC)_{11}}(R'ABC)_{11}\right)(e^{\pm i \theta_0})\right] \\=\mathcal O(\widetilde e nu_0\log u_0^{-1}+\widetilde e nsu_0^{2-2|x|}\log u_0^{-1}+nu_0^{-2|x|}/(s\log u_0^{-1})).
\end{multline}

By substituting \eqref{formula RABC'}, \eqref{formula RAB'C}, \eqref{form RA'BC}, \eqref{order R'} into the definition of $F(z)$ from \eqref{formula F(z)}, and substituting the resulting expression into the expression for the differential identity \eqref{diff id}, we obtain the following proposition.

\begin{Prop}
We have the following asymptotics for $\log D_n(J)$, as $u_0\to0$ and $s,n\to \infty$ such that $su_0\log u_0^{-1}\to 0$ and $s/n\to 0$:
\begin{multline}\label{asymdiffid}
\log D_n(J)=\log D_n(J_2)+\frac{|\widetilde h|^2}{\pi}\int_0^{\theta_0}
\Bigg[\left(\sum_{j=0}^4F_j(e^{i\theta})+F_j(e^{-i\theta})\right)(1+\mathcal O(s/n))\\+
\mathcal O(n+ns(1+u^{1-2|x|}\log u^{-1})^2u^{2-2|x|}\log u^{-1}+nu^{-2|x|}/(s\log u^{-1}))\Bigg]d\theta,
\end{multline}
where the integration variable $\theta=\frac{s}{n}u$, and where the asymptotics of $F_0(z)$ are given in \eqref{formula RABC'} and the asymptotics of $F_1,\,F_2,\, F_3,\, F_4$ are given in \eqref{eqnF1-F4}.
\end{Prop}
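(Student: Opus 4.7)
The proof is a matter of assembly: combine the differential identity from Proposition \ref{Prop Diffid} with the expansion \eqref{formula F(z)} of $F(z)$ in terms of the Riemann--Hilbert objects $R, A, B, C$, substitute the term-by-term asymptotics gathered earlier in this subsection, and integrate in $\theta_0$. The plan is to proceed in three stages.

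First, start from $\partial_{\theta_0}\log D_n(J) = -\frac{1}{2\pi}\bigl(F(a) + F(\bar a)\bigr)$, and observe that at $\theta_0=0$ the arc $J_1$ collapses to a point, so $J=J_2$ and $D_n(J)\big|_{\theta_0=0} = D_n(J_2)$. Integrating from $0$ to the target value $\theta_0=u_0 s/n$ yields
\[\log D_n(J) = \log D_n(J_2) - \frac{1}{2\pi}\int_0^{\theta_0}\bigl(F(a)+F(\bar a)\bigr)\, d\theta,\]
and the change of variables $\theta = su/n$ converts this into the form of the integral appearing in \eqref{asymdiffid}. The prefactor $-2|\widetilde h|^2$ from \eqref{formula F(z)}, multiplied by $-\frac{1}{2\pi}$, accounts for the overall constant $|\widetilde h|^2/\pi$.

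Second, substitute \eqref{formula F(z)} and identify each contribution. The factor inside the real part splits by the product rule into four terms. The $RABC'$ piece produces $F_0$ via \eqref{formula RABC'}. The $RA'BC$ piece decomposes as in \eqref{form RA'BC} into $F_1 + F_2 + F_3 + F_4$, with individual asymptotics recorded in \eqref{eqnF1-F4}. The $RAB'C$ piece contributes $\mathcal O(n)$ by \eqref{formula RAB'C}, while the $R'ABC$ piece contributes the larger error estimated in \eqref{order R'}. Collecting the first five quantities under the sum $\sum_{j=0}^4$ and absorbing the last two (together with the multiplicative $(1 + \mathcal O(s/n))$ from \eqref{formula F(z)}) into the stated error term yields exactly \eqref{asymdiffid}.

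The main obstacle, and the only step requiring care beyond bookkeeping, is uniformity of the error estimates in the integration variable $\theta \in [0, \theta_0]$. All the intermediate bounds --- Proposition \ref{Proposition error term} for the jumps of $R$ on $\partial U_0$, the matching estimates \eqref{matchingbj} on $\partial U_1, \partial U_2$, and the local expansions of $\zeta$, $B$, $C$, $A$, $RA$, and $R'$ --- were derived in terms of the dimensionless quantities $u\log u^{-1}$, $s/n$, and $su\log u^{-1}$, so the hypotheses $su_0\log u_0^{-1}\to 0$ and $s/n\to 0$ propagate to every $u\le u_0$. Hence the pointwise asymptotics integrate into uniform bounds on $(0, \theta_0]$, and the proposition follows.
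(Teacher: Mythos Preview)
Your proposal is correct and follows essentially the same approach as the paper: the paper's argument is precisely the one-line substitution of \eqref{formula RABC'}, \eqref{formula RAB'C}, \eqref{form RA'BC}, \eqref{order R'} into \eqref{formula F(z)} and then into the differential identity \eqref{diff id}, followed by integration from $\theta_0=0$ (where $J=J_2$) to $\theta_0$. Your account is slightly more explicit about the boundary condition $D_n(J)|_{\theta_0=0}=D_n(J_2)$ and about the uniformity in the integration variable, but the underlying argument is identical.
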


\subsection{Integration of Differential Identity} \label{integration of diffid}
We evaluate the integral in formula \eqref{asymdiffid} asymptotically 
to prove Theorem \ref{prop toeplitz}. 

Using \eqref{formu0}, \eqref{def f}  we find that
\begin{equation} \label{thetaalpha}
\begin{aligned}
\theta_0&=\theta_0(k;x)=\frac{su_0}{n}=\frac{8s}{(u_1^{-1}-u_2^{-1})n}e^{-\frac{s\sqrt{|u_1u_2|}}{2(k+x)}}\left(1+\mathcal O \left(\frac{s}{n}\log u_0^{-1}+u_0^2(\log u_0^{-1})^2\right)\right)\\
\frac{d\theta_0}{dx}&=\frac{s\theta_0\sqrt{|u_1u_2|}}{2(k+x)^2}\left(1+\mathcal O \left(\frac{s}{n}\log u_0^{-1}+u_0^2(\log u_0^{-1})^2\right)\right),\\
\rho&=\begin{cases}
\frac{1}{2\pi \kappa_k^2} e^{\frac{s\sqrt{|u_1u_2|}}{2(k+x)}(-1+2x)}\left(1+\mathcal O \left(\frac{s}{n}\log u_0^{-1}\right)\right)&\textrm{for $0\leq x<1/2$}\\
-2\pi \kappa_{k-1}^2 e^{\frac{s\sqrt{|u_1u_2|}}{2(k+x)}(-1-2x)}\left(1+\mathcal O \left(\frac{s}{n}\log u_0^{-1}\right)\right)&\textrm{for $-1/2 \leq x< 0$.}
\end{cases}
\end{aligned}
\end{equation}
 Letting $k$ in the expression for $\theta_0$ in \eqref{thetaalpha} be fixed, we integrate in $\theta_0$, denoting
\begin{equation}\int_{\theta_0(k,-1/2)}^{\theta_0(k,1/2)}*\,\,d\theta=\int_{x=-1/2}^{x=1/2}*\,\,d\theta_0.
\end{equation}  
Note that by \eqref{kappa mu nu} with $\eta_2-\eta_1=4$,
\begin{equation}
\label{kappa k-1/k}
\frac{\kappa_{k-1}^2}{\kappa_k^2}=\frac{4k^2}{(2k+1)(2k-1)}.
\end{equation}

 We integrate $F_1$ from \eqref{eqnF1-F4}, changing the variable of integration using \eqref{thetaalpha} and recalling $K$ from \eqref{def K}
  and $\rho$ from \eqref{thetaalpha}
  to find that for $k\geq 1$,

\begin{equation}\label{int F1} \begin{aligned}
&\frac{|\widetilde h|^2}{\pi} \int_{x=-1/2}^{x=1/2} F_1(e^{\pm i\theta_0}) d\theta_0\\
&=\frac{(k+1)^2}{2(2k+1)}\int_0^{1/2}
\left(\frac{1}{2\pi\kappa_k^2}e^{(2x -1)\frac{s\sqrt{|u_1u_2|}}{2(k+x)}}\right)\frac{s\sqrt{|u_1u_2|}}{2(k+x)^2} \frac{(1+\mathcal O(r_2+s^{-2}))dx}{1+\frac{1}{2\pi \kappa_k^2}e^{(2x -1)\frac{s\sqrt{|u_1u_2|}}{2(k+x)}}}\\
&+\frac{(2k+1) \kappa_{k-1}^2}{8\kappa_k^2}\int_{-1/2}^0 \frac{s\sqrt{|u_1u_2|}}{2(k+x)^2}\left(1-\frac{2\pi \kappa_{k-1}^2e^{-(2x +1)\frac{s\sqrt{|u_1u_2|}}{2(k+x)}}}{1+2\pi \kappa_{k-1}^2e^{-(2x +1)\frac{s\sqrt{|u_1u_2|}}{2(k+x)}}}\right)
(1+\mathcal O(r_2+s^{-2}))dx\\
&=\left[\frac{(k+1)^2}{2(2k+1)^2}\log \left(1+\frac{1}{2\pi \kappa_k^2}e^{(2x -1)\frac{s\sqrt{|u_1u_2|}}{2(k+x)}}\right)\right]_{x=0}^{1/2}
+\Bigg[\frac{ k^2}{2(2k-1)^2}\times \\
&\times \log \left(1+2\pi \kappa_{k-1}^2 e^{-(2x +1)\frac{s\sqrt{|u_1u_2|}}{2(k+x)}}\right)
-\frac{ s k^2\sqrt{|u_1u_2|}}{4(2k-1)(k+x)}
\Bigg]_{x=-1/2}^0 \\
&+\mathcal O\left(\max_{x\in [-1/2,1/2)} \left[r_2\log u_0^{-1}+s^{-2}\log u_0^{-1}\right] \right)\\
&=\frac{(k+1)^2}{2(2k+1)^2}\log \left(1+\frac{1}{2\pi \kappa_k^2}\right)
-\frac{ k^2}{2(2k-1)^2}\log \left(1+2\pi \kappa_{k-1}^2 \right)
+ s \sqrt{|u_1u_2|}\frac{k}{4(2k-1)^2} \\
&+\mathcal O\left(\max_{x\in [-1/2,1/2)}\left[r_2\log u_0^{-1}+s^{-2}\log u_0^{-1}\right]\right),
\end{aligned}
\end{equation}
where $r_2$ is given in \eqref{formula B11C11}, $u_0=u_0(k,x)$.

When $k=0$ we have for $x\in [0,1/2)$
\begin{equation} \label{F1k0}
\frac{ |\widetilde h|^2}{\pi}\int_{x=0}^{x=x} F_1(e^{\pm i\theta_0}) d\theta_0
=\frac{1}{2}\log \left(1+\frac{2}{\pi}e^{-(2x +1)\frac{s\sqrt{|u_1u_2|}}{2x}}\right)+\mathcal O\left(su_0^{1-2x}\left(
u_0\log u_0^{-1}+s^2/n+s^{-2}\right)\right),
\end{equation}
where, in the $\bigO$ error term, $u_0=u_0(k=0,x)$.
Similarly, we integrate $F_4$ for $k\geq 1$:
\begin{equation}
\begin{aligned}
&\frac{|\widetilde h|^2}{\pi} \int_{-1/2}^{1/2}F_4(e^{\pm i\theta_0}) d\theta_0\\
&  =\left[
\frac{(k-1)^2}{2(2k-1)^2}\log \left(1+2\pi \kappa_{k-1}^2e^{(-2x -1)\frac{s\sqrt{|u_1u_2|}}{2(k+x)}}\right)
\right]_{x=-1/2}^{0}\\
&+\left[
\frac{ k^2}{2(2k+1)^2}\log \left(1+\frac{1}{2\pi \kappa_k^2} e^{(2x -1)\frac{s\sqrt{|u_1u_2|}}{2(k+x)}}\right)
+\frac{ sk^2 \sqrt{|u_1u_2|}}{4(2k+1)(k+x)}
\right]_{x=0}^{1/2}\\&
+\mathcal O\left(\max_{x\in [-1/2,1/2)}\left[r_2\log u_0^{-1}+s^{-2}\log u_0^{-1}\right]\right)
\\&=
-\frac{(k-1)^2}{2(2k-1)^2}\log \left(1+2\pi \kappa_{k-1}^2\right)
+\frac{ k^2\log \left(1+\frac{1}{2\pi \kappa_k^2}\right)}{2(2k+1)^2}
-\frac{ sk \sqrt{|u_1u_2|} }{4(2k+1)^2}\\&+\mathcal O\left(\max_{x\in [-1/2,1/2)}\left[r_2\log u_0^{-1}+s^{-2}\log u_0^{-1}\right]\right).
\end{aligned}
\end{equation}
 When $k=0$ and $x\in[0,1/2)$, we have $F_4=0$.
 Thus, for $k\geq 1$,
\begin{equation}\label{F1+F4}
\begin{aligned}
&\frac{|\widetilde h|^2}{\pi}\int_{-1/2}^{1/2} (F_1+F_4)(e^{\pm i\theta_0}) d\theta_0=
- \frac{k^2+(k-1)^2}{2(2k-1)^2}\log \left(1+2\pi \kappa_{k-1}^2\right)
\\&+\frac{ k^2 +(k+1)^2}{2(2k+1)^2}\log \left(1+\frac{1}{2\pi \kappa_k^2}\right)
-\frac{ sk \sqrt{|u_1u_2|} }{4}\left(\frac{1}{(2k+1)^2}-\frac{1}{(2k-1)^2}\right)\\&+\mathcal O\left(\max_{x\in [-1/2,1/2)}\left[u_0(\log u_0^{-1})^2+\frac{s}{n}(\log u_0^{-1})^{2}+s^{-2}\log u_0^{-1}\right] \right).
\end{aligned}
\end{equation}

If $-1/2<x<1/2$, then for $k\geq 1$
\begin{equation}\label{F1+F4 x}\begin{aligned}
&\frac{|\widetilde h|^2}{\pi}\int_{-1/2}^{x} (F_1+F_4)(e^{\pm i\theta_0}) d\theta_0\\
&=- \frac{k^2+(k-1)^2}{2(2k-1)^2}\log(1+2\pi \kappa_{k-1}^2)-\frac{s}{4} \sqrt{|u_1u_2|}w_1(x)+r_3,\qquad r_3=o(1)\\
w_1(x)&=\begin{cases}
-\frac{k}{(2k-1)^2}+\frac{k}{2k+1}\frac{x}{k+x}&\textrm{for $0\leq x<1/2$,}\\
-\frac{k^2}{(2k-1)^2}\frac{1+2x}{k+x}& \textrm{for $-1/2\leq x<0$.}
\end{cases}
\end{aligned}\end{equation}
 We keep the term $r_3$ in \eqref{F1+F4 x} as it is not uniform in $x$, and is not small as $x$ approaches $\pm 1/2$.
\begin{equation} \label{o(1)1}
\begin{aligned}
r_3&= \frac{k^2+(k+1)^2}{2(2k+1)^2}\log \left(1+\frac{1}{2\pi \kappa_k^2} e^{(2x -1)\frac{s\sqrt{|u_1u_2|}}{2(k+x)}}\right)+\\&+ \frac{k^2+(k-1)^2}{2(2k-1)^2}\log \left(1+2\pi \kappa_{k-1}^2 e^{(-2x -1)\frac{s\sqrt{|u_1u_2|}}{2(k+x)}}\right)\\&+\mathcal O\left(\max_{x\in [-1/2,1/2)}\left[u_0(\log u_0^{-1})^2+\frac{s}{n}(\log u_0^{-1})^{2}+s^{-2}\log u_0^{-1}\right]\right).
\end{aligned}\end{equation}

Similarly but simpler, using \eqref{eqnF1-F4} and \eqref{thetaalpha}, we find that for $k\geq 1$,
\begin{equation}\label{int F2F3}
\begin{aligned}
&\frac{|\widetilde h|^2}{\pi}\int_{-1/2}^{x} (F_2+F_3)(e^{\pm i\theta_0}) d\theta_0\\
&=\begin{cases}
-  \frac{k(k-1)}{(2k-1)^2}\log \left(1+2\pi \kappa_{k-1}^2 \right)+r_4&\textrm{for $|x|<1/2$,}\\
\begin{matrix}-\frac{ k(k-1)}{(2k-1)^2}\log \left(1+2\pi \kappa_{k-1}^2 \right)+\frac{ k(k+1)}{(2k+1)^2}\log \left(1+\frac{1}{ 2\pi \kappa_k^2}\right)\\
\qquad\qquad +\bigO\left(\max_{x\in [-1/2,1/2)} \left[ u_0\log u_0^{-1}+\frac{1}{s}+\frac{s}{n}\right] \right)
\end{matrix}
&\textrm{for $x=1/2$,}
\end{cases}
\end{aligned}
\end{equation}
where $r_4=o(1)$.
 When $|x|<1/2$ we again keep track of the error term
\begin{equation}\label{o(1)2}
\begin{aligned}
&r_4= \frac{k(k+1)}{(2k+1)^2}\log \left(1+\frac{1}{2\pi \kappa_k^2} e^{(2x -1)\frac{s\sqrt{|u_1u_2|}}{2(k+x)}}\right)\\&+ \frac{k(k-1)}{(2k-1)^2}\log \left(1+2\pi \kappa_{k-1}^2 e^{(-2x -1)\frac{s\sqrt{|u_1u_2|}}{2(k+x)}}\right)+\bigO\left(\max_{x\in [-1/2,1/2)} \left[ u_0\log u_0^{-1}+\frac{1}{s}+\frac{s}{n}\right] \right).
\end{aligned}\end{equation}

When $k=0$, we have $F_2,F_3,F_4=0$, and thus the integral of $F_1+F_2+F_3+F_4$ over $[x=0,x=x_0<1/2]$ for $k=0$ is given by \eqref{F1k0}.
For any $k\geq 1$ and $-1/2\leq x<1/2$,
  combining \eqref{F1k0}, \eqref{F1+F4}, \eqref{F1+F4 x}, \eqref{int F2F3}  
\begin{equation} \label{integral F1+2+3+4}
\begin{aligned}
&\frac{|\widetilde h|^2}{\pi} \int_{0}^{\theta_0} (F_1+F_2+F_3+F_4)(e^{\pm i\theta}) d\theta
=-\frac{ s}{4} \sqrt{|u_1u_2|}w_1(x)+\frac{1}{2}  \delta_k (x)\\ &-\frac{1}{2} \sum_{j=0}^{k-1} \left( \log 2\pi \kappa_j^2+\frac{ s}{2} \sqrt{|u_1u_2|}\left(
\frac{j}{(2j+1)^2}-\frac{j}{(2j-1)^2}\right)\right)
\\&+\bigO\left(su_0\log (u_0)^{-1}+s^3/n+1/\log(u_0)^{-1}+1/s\right),\\
&\delta_k (x)=\begin{cases}
 \log (1+2\pi \kappa_{k-1}^2) &\textrm{for $x=-1/2$}\\
o(1)& \textrm{for $|x|<1/2$.}
\end{cases}
\end{aligned}
\end{equation}
By \eqref{o(1)1} and \eqref{o(1)2}, it follows that for $|x|<1/2$, $ \delta_k(x)$ is given explicitly as
\begin{equation}
 \delta_k(x)= \log \left(1+2\pi \kappa_{k-1}^2 e^{-(2x +1)\frac{s\sqrt{|u_1u_2|}}{2(k+x)}}\right)
+ \log \left(1+(2\pi \kappa_k^2)^{-1}e^{(2x -1)\frac{s\sqrt{|u_1u_2|}}{2(k+x)}}
\right).
\end{equation}

Furthermore, by \eqref{formula RABC'} (which holds for all $k\geq 0$) and Proposition \ref{Prop x1x2}, we find that 
\begin{equation} \label{integral F0}
\begin{aligned}
&\frac{|\widetilde h|^2}{\pi}\int_{0}^{\theta_0} \left(F_0\left(e^{i\theta}\right)+F_0\left(e^{-i\theta}\right) \right)d\theta \\&=
 s \sqrt{|u_1u_2|} \left(w_2(x)+\sum_{j=1}^{k-1}\frac{j(j+1)}{(2j+1)^2}+\frac{j(j-1)}{(2j-1)^2}\right)+\mathcal O\left(\frac{s^3}{n}+su_0\log u_0^{-1}+\frac{1}{\log u_0^{-1}}\right)
\\ &w_2(x)=\begin{cases}
\frac{k(k-1)}{(2k-1)^2}+\frac{k(k+1)}{2k+1}\frac{x}{k+x} &\textrm{for $0\leq x<1/2$}\\
\frac{k^2(k-1)}{(2k-1)^2}\frac{1+2x}{k+x}& \textrm{for $-1/2 \leq x <0$,}
\end{cases}
\end{aligned}\end{equation}
for $k\geq 1$ and $F_0=0$ for $k=0$.

\subsubsection{Proof of Theorem \ref{prop toeplitz}}
We sum together \eqref{integral F1+2+3+4}, \eqref{integral F0}, and substitute the result into \eqref{asymdiffid}, to find that
 \begin{equation}\label{formulaDn}
 \begin{aligned}
 \log D_n \left(J\right)&=
 \log D_n(J_2) 
 +s \sqrt{|u_1u_2|} \left(w_3(x)+\sum_{j=1}^{k-1}\frac{2j^2}{4j^2-1}\right) - \sum_{j=0}^{k-1} \log 2\pi \kappa_j^2\\&
+\delta_k(x)+\mathcal O (u_0+su_0\log u_0^{-1}+s^3/n+1/\log u_0^{-1}).\\
w_3&=-\frac{w_1}{2}+w_2= 
\frac{k^2}{2(2k-1)}\frac{1+2x}{k+x},
\end{aligned}\end{equation}  
where $\sum_{j=1}^0=0$.
The first sum can be evaluated by noting that
\begin{equation}
\sum_{j=1}^{k-1}\frac{2j^2}{4j^2-1}=\frac{k(k-1)}{2k-1},\end{equation}
and, it follows that 
\begin{equation}\label{formw}
\begin{aligned}
s\sqrt{|u_1u_2|}\left(w_3+\sum_{j=1}^{k-1}\frac{2j^2}{4j^2-1}\right)&=\frac{s}{2}\sqrt{|u_1u_2|}\left(\omega-\frac{x^2}{\omega}\right), \quad \omega=k+x.
\end{aligned}\end{equation}
The sum with the leading coefficients $\kappa_j$ is given by
\begin{equation}\label{Barnes}
 \prod_{j=0}^{k-1}\kappa_j^2=4^{-k^2} \frac{G(2k+1)}{G(k+1)^4},
\end{equation}
where $G$ is the Barnes G-function. 
By substituting \eqref{formw}, \eqref{Barnes} into \eqref{formulaDn} we find that
\begin{equation} \label{almost there}
 \begin{aligned}
 \log D_n \left(J(u_0)\right)&=
 \log D_n\left(J_2\right)
 +\frac{s}{2}\sqrt{|u_1 u_2|}\left(\omega-\frac{x^2}{\omega}\right)+c(k)+\delta_k(x)
\\&+\mathcal O (s^3/n+su_0\log u_0^{-1}+1/\log u_0^{-1}\}).
\end{aligned}\end{equation}  

Now define $\alpha=u_2/2$, $\beta=u_1/2$, and
\begin{equation}
\nu=\frac{8}{\beta^{-1}-\alpha^{-1}}e^{-\frac{s\sqrt{|\alpha \beta|}}{\omega}}.
\end{equation}
Then, by \eqref{thetaalpha},
\begin{equation} \label{u0nu}
\frac{u_0}{2}=\nu \left(1+\bigO \left(\frac{s}{n}\log \nu^{-1}+\nu^2(\log\nu^{-1})^2\right)\right).
\end{equation}
It is an easy exercise using \eqref{u0nu}, the continuity of $w,x$ as functions of $u_0$, and \eqref{almost there} to show that
\begin{equation}\label{really almost there}
\log D_n(\nu)=\log D_n(u_0)+\mathcal O (u_0+su_0\log u_0^{-1}+s^3/n+1/\log u_0^{-1}).
\end{equation}
Substituting the asymptotics from \eqref{almost there} into \eqref{really almost there}, and using uniformity of the error terms, we prove Theorem \ref{prop toeplitz}.

\subsection{Proof of Proposition \ref{Prop x1x2}} \label{consideration of error terms}
We consider the small norm matrices, and we prove Proposition \ref{Prop x1x2}.
From \eqref{x1} it follows that
\begin{multline}\label{Re x1x2}
\Re[(\overline{x_1} x_2)(e^{\pm i\theta_0})]=-|\widetilde h|^{-2}\Re[\overline{\gamma_1}\gamma_2]\left(1+2\Re R_{1,11}(e^{\pm i\theta_0})
+2\Re R_{2,11}(e^{\pm i\theta_0})\right)\\+\mathcal O \left(\left(|\gamma_1|^2-|\gamma_2|^2\right)+
(|R_{1,11}|+|R_{12}|)^2+||R_3||
\right),
\end{multline}
where $||\,.\,||$ denotes the value of the largest element of the matrix in absolute value.
 From \eqref{epsilon1/2} we have
\begin{equation}\begin{aligned}
\Re[\overline{\gamma_1}\gamma_2]&=-1/2+\mathcal O(s/n),\\
|\gamma_1|^2-|\gamma_2|^2&=\mathcal O(s/n).
\end{aligned}\end{equation}
 It follows from Proposition \ref{Proposition error term}, \eqref{Delta11}--\eqref{Delta12}, \eqref{form R1} that
\begin{equation} \label{error R1R2}
\begin{aligned}
R_1(e^{\pm i \theta_0})&=\mathcal O(s^{-1}+\widetilde eu_0^{1+2|x|}\log u_0^{-1}+\widetilde esu_0^2\log u_0^{-1}),\\
R_2(e^{\pm i \theta_0})&=\mathcal O((s^{-1}+\widetilde eu_0^{1+2|x|}\log u_0^{-1})^2+\widetilde esu_0^2\log u_0^{-1}),\\
R_3(e^{\pm i \theta_0})&=\mathcal O(||R_1||\,||R_2||+(u_0\log u_0^{-1})^3).
\end{aligned}\end{equation}
 From \eqref{Re x1x2}--\eqref{error R1R2}, it follows that 
\begin{multline}\label{error x1x2}
\Re[(\overline{x_1} x_2)(e^{\pm i\theta_0})]=\frac{1}{2}|\widetilde h|^{-2}\left(1+2\Re R_{1,11}(e^{\pm i\theta_0})+2\Re R_{2,11}(e^{\pm i\theta_0})\right)\\+\mathcal O \left(  \left(s^{-1}+\widetilde eu_0^{1+2|x|}\log u_0^{-1}+\widetilde esu_0^2\log u_0^{-1}\right)^2+s/n\right).
\end{multline}
We will evaluate $\Re R_{1,11}$ and $\Re R_{2,11}$ to prove Proposition \ref{Prop x1x2}.

We recall from \eqref{form R1} that $R_1$ is a sum of 3 terms. The first term is an integral of $\Delta_1^{(1)}$, and the two other terms are integrals of $\Delta_1^{(b_1)}$ and $\Delta_1^{(b_2)}$. We first evaluate the contribution from the terms $\Delta_1^{(b_1)}$ and $\Delta_1^{(b_2)}$.
\subsubsection{Contribution to $R_1$ from $\Delta_1^{(b_1)}$ and $\Delta_1^{(b_2)}$}
It follows from \eqref{Delta11} and \eqref{Delta12} that 
\begin{equation} \label{formula Delta11}
\frac{\Delta^{(b_1)}_{1,11}(z)}{b_1-e^{\pm i \theta_0}}=\frac{C_{\Delta,1}}{z-b_1}+\mathcal O(1),\qquad \frac{\Delta^{(b_1)}_{2,11}(z)}{b_2-e^{\pm i \theta_0}}=\frac{C_{\Delta,1}}{z-b_2}+\mathcal O(1)
\end{equation}
as $z\to b_1$ and $z\to b_2$ respectively, and that the matrices $C_{\Delta,1}$ and $C_{\Delta_2}$ are given as follows as $s\to \infty$
\begin{equation} 
\begin{aligned} \label{CDelta}
C_{\Delta,1}&=-\frac{
\Bigg[i
+\frac{fi}{\theta_1}(\psi+\psi^{-1})
+\frac{f^2}{\theta_1^2}\left(2i+\left(\frac{\gamma_1}{\gamma_2}-\frac{\gamma_2}{\gamma_1}\right)\right)
\Bigg]}{8s(u_1\mp u_0)\left(1-4\frac{k+x}{s(u_1-u_2)}\left|\frac{u_1}{u_2}\right|^{1/2}
\right)}(1+\bigO(s/n)),\\
C_{\Delta,2}&=\frac{
\Bigg[-i
+\frac{fi}{\theta_1}(\psi+\psi^{-1})
+\frac{f^2}{\theta_1^2}\left(-2i+\left(\frac{\gamma_1}{\gamma_2}-\frac{\gamma_2}{\gamma_1}\right)\right)
\Bigg]}{8s(u_2\mp u_0)\left(1-4\frac{k+x}{s(u_1-u_2)}\left|\frac{u_1}{u_2}\right|^{1/2}
\right)}(1+\bigO(s/n)).
\end{aligned} \end{equation}
 We recall that $f$ is real to the main order and
from \eqref{def f}, \eqref{epsilon1/2}  we have
\begin{equation}
\Im \left(\psi+\psi^{-1}\right)=\mathcal O(s/n),\qquad \Re\left(\frac{\gamma_1}{\gamma_2}-\frac{\gamma_2}{\gamma_1}\right)=\mathcal O(s/n).
\end{equation}
Since  the interior of the bracket $[\,\,]$ in \eqref{CDelta} is imaginary to main order, we can calculate the residue in the integral of \eqref{formula Delta11} to find:
\begin{equation} \label{small int1}
\Re \left[\sum_{j=1,2}\int_{\partial U_j}\frac{\Delta_{1,11}^{(b_j)}(u)}{u-e^{\pm i \theta_0}}\frac{du}{2\pi i}\right]=\mathcal O(s/n).
\end{equation}

\subsubsection{Contribution to $R_1$ from $\Delta_1^{(1)}$}
Denote
\begin{equation}\label{def x0y0}
\begin{aligned}
y_0(z)&=-2i(\gamma_1^2-\gamma_2^2)=-i \left( \left( \frac{z-b_2}{z-b_1}\right)^{1/2}+\left( \frac{z-b_1}{z-b_2}\right)^{1/2}\right),\\
x_0(z)&=-4\gamma_1\gamma_2=-i \left( \left( \frac{z-b_2}{z-b_1}\right)^{1/2}-\left( \frac{z-b_1}{z-b_2}\right)^{1/2}\right),
\end{aligned}
\end{equation}
with branch cuts on $J_2$ such that the square root is positive as $z\to \infty$.
Our goal is to evaluate the terms in \eqref{PM-12}, and given a matrix $X$ we denote
\begin{equation}
(LX)(z)= \widetilde E(z)B(z)X B^{-1}(z) \widetilde E^{-1}(z) .
\end{equation}
Define $D(z)$ and $E^{(\pm)}(z)$ by
\begin{equation}\nonumber
D(z)=L\begin{pmatrix}
1&0\\0&-1
\end{pmatrix},\quad
E^{(+)}(z)=L\begin{pmatrix}
0&1\\0&0
\end{pmatrix},\quad
E^{(-)}(z)=L\begin{pmatrix}
0&0\\1&0
\end{pmatrix}.
\end{equation}
Then from \eqref{PM-12} it follows that
\begin{equation} \label{Delta1}
\Delta_{1,11}^{(1)}(z)=\begin{cases}\zeta^{-1}(z)\left(\Phi_{1,11}D(z)+ B_{22}^2(z) \Phi_{1,21}E^{(-)}(z)\right),& \textrm{$0\leq x<1/2$,}
\\
\zeta^{-1}(z)\left(\Phi_{1,11}D(z)+ B_{11}^2(z) \Phi_{1,12}E^{(+)}(z)\right),& \textrm{$-1/2\leq x<0$.}\end{cases}
\end{equation}

Recalling the definition of $B$ in \eqref{defE}, the definition of $F$ in \eqref{def F}, and the definition of $\widetilde E$ in \eqref{def Etilde}, we find that
 \begin{align}
 \nonumber D(z)&=\frac{i}{2}y_0(z)-\frac{x_0(z)f}{2(z-1)}(\psi+\psi^{-1})
-\frac{f^2}{(z-1)^2}\left(iy_0(z)+\frac{x_0(z)}{2}\left(-\psi+\psi^{-1}\right)\right),\\
  \label{CandD} E^{(\pm)}(z)&=\frac{1}{4}\Bigg(x_0(z)+\frac{f}{z-1}\left(iy_0(z)(\psi+\psi^{-1})\pm 2(-\psi+\psi^{-1})\right)\\& \nonumber
\qquad \qquad -\frac{f^2}{(z-1)^2}\left(2x_0(z)+iy_0(z)\left(\psi-\psi^{-1}\right)\mp 2(\psi+\psi^{-1})
\right)\Bigg).
\end{align}

We analyze the sign of $\Delta_{1,11}^{(1)}$ in \eqref{Delta1}.
From \eqref{def f} and \eqref{epsilon1/2} we have
\begin{equation}
\Im \left(\psi+\psi^{-1}\right)=\mathcal O(s/n), \quad \Re \left(\psi-\psi^{-1}\right)=\mathcal O (s/n).	
\end{equation}
 From \eqref{def x0y0} we see that
\begin{equation}\begin{aligned}
\Im\left(x_0\left(e^{i\theta}\right) \right)&=\mathcal O(s/n),
&\Im\left(\frac{d^j}{d\theta^j}x_0\left(e^{i\theta}\right) \right)&=\mathcal O(n^{j-1}/s^{j-1}),\\
\Im\left(y_0\left(e^{i\theta}\right) \right)&=\mathcal O(s/n),
\qquad &\Im\left(\frac{d^j}{d\theta^j}y_0\left(e^{i\theta}\right) \right)&=\mathcal O(n^{j-1}/s^{j-1}),
\end{aligned}
\end{equation}
for $e^{i\theta}\in U_0\cap C$.  Write \eqref{Delta1} in the form
\begin{equation}
\Delta_1^{(1)}(z)=\frac{\Delta_{1,-3}^{(1)}(z)}{(z-1)^3}+\frac{\Delta_{1,-2}^{(1)}(z)}{(z-1)^2}+\frac{\Delta_{1,-1}^{(1)}(z)}{(z-1)},
\end{equation}
where $\Delta_{1,-j}^{(1)}$ are analytic functions in $z$ in $U_0$. Then a calculation of residues gives the following expansion as $z\to 1$:
\begin{multline}\label{residue int Delta}
\int_{\partial U_0}\frac{\Delta_1^{(1)}(u)}{u-z}\frac{du}{2\pi i}=
\frac{1}{6}\frac{d^3}{dz^3}\left(\Delta_{1,-3}^{(1)}\right)(1)+\\
\frac{1}{2} \frac{d^2}{dz^2}\left(\Delta_{1,-2}^{(1)}\right)(1)+
 \frac{d}{dz}\left(\Delta_{1,-1}^{(1)}\right)(1)+\mathcal O(z-1).
\end{multline}
We note that $\zeta$ is real on $J_1$, and recall the expansion of $\zeta$ in \eqref{zeta exp}--\eqref{zeta0zeta1}. We also note that $\Im f=\bigO (s^2/n^2)$, and that $\Phi_{1,11}$ is real but that $\Phi_{1,12}$ and $\Phi_{1,21}$ are imaginary.
Combining with \eqref{Delta1}--\eqref{residue int Delta}, we conclude that
\begin{equation} \label{small int2}
\Re \left(\int_{\partial U_0}\frac{\Delta_{1,11}^{(1)}(u)}{u-e^{\pm i\theta_0}}\frac{du}{2\pi i} \right)=\mathcal O(s/n).
\end{equation}
 As a consequence of \eqref{small int1} and \eqref{small int2} we have
\begin{equation}\label{small int4}
\Re R_{1,11}(e^{\pm i \theta_0})=\mathcal O(s/n).
\end{equation}

\subsubsection{Order of $R_2(e^{\pm i \theta_0})$}
 From \eqref{CandD} and \eqref{PM-12} we have
\begin{equation} \nonumber \begin{aligned}
\Delta_{2,11}^{(1)}(z)&=\begin{cases} \zeta^{-2}(z)\Big[\Phi_{2,11}D(z)+ B_{11}^2(z) (\Phi_{2,12}  -\Phi_{1,12}\Phi_{1,22})E^{(+)}(z)+\bigO (1)\Big], & \textrm{$0\leq x<\frac{1}{2}$,}
\\
\zeta^{-2}(z)\Big[\Phi_{2,11}D(z)+ B_{22}^2(z) (\Phi_{2,21}-\Phi_{1,21}\Phi_{1,11})E^{(-)}(z)+\bigO (1)\Big], & \textrm{$-\frac{1}{2}\leq x<0$,}\end{cases} \\
\Xi^{(1)}(z)&=\begin{cases}-\zeta^{-3}(z)\left[ B_{11}^2(z) \Phi_{1,12}\Phi_{2,22}E^{(+)}(z)+\bigO (1)\right],& \textrm{$0\leq x<\frac{1}{2}$,}
\\
-\zeta^{-3}(z)\left[ B_{22}^2(z) \Phi_{1,21}\Phi_{2,11}E^{(-)}(z)+\bigO (1)\right],& \textrm{$-\frac{1}{2}\leq x<0$.}\end{cases} 
\end{aligned}\end{equation} 
By inspection of the signs of each element, it follows that
\begin{equation}
\Re \left(\int_{\partial U_0}\frac{\Delta_{2,11}^{(1)}(u)+\Xi_{11}^{(1)}(u)}{u-e^{\pm i\theta_0}}\frac{du}{2\pi i} \right)=\mathcal O(s/n).
\end{equation}
The remaining contributions to $R_{2,11}$, defined in \eqref{PM-12}, are calculated using rougher estimates from \eqref{error R1R2}. Thus it follows that
\begin{equation} \label{small int3}
\Re R_{2,11}=\mathcal O\left(s/n +(\widetilde eu_0^{1+2|x|}\log u_0^{-1}+\widetilde esu_0^2\log u_0^{-1}+s^{-1})^2\right).
\end{equation}
Substituting \eqref{small int4} and \eqref{small int3} into \eqref{error x1x2} yields Proposition \ref{Prop x1x2}.

\section{Connection to the asymptotics of \cite{DIZ}}\label{sec5}
Consider the Deift-Its-Zhou asymptotics (\ref{2 gap formula}) for 2 fixed gaps $A=(\alpha_1,\beta_1)\cup(\alpha_2,\beta_2)$. Without loss of generality, we assume that $\nu\equiv \alpha_2=-\beta_1$. We also denote $\alpha=\alpha_1$, $\beta=\beta_2$. The following lemma shows that these asymptotics 
can be extended (with a worse error term) to the region where $\nu$ is decreasing at a sufficiently slow rate as $s\to\infty$.
This gives a connection to the asymptotics of Theorem \ref{Theorem u0s to 0} (see Remark \ref{Remark12} following Theorem \ref{Theorem u0s to 0}). 

\begin{Lemma}\label{lemmaconnect}
Let $\ep>0$.
As $s\to\infty$, uniformly for $\nu\in(\nu_1,\nu_2)$ where $\nu_2>0$ is fixed and $\nu_1=\nu_1(s)\to 0$ s.t.
\[
s\nu_1^{1/2+\ep}\to\infty,
\]
we have 
\be\label{Lemma-form}
\frac{\partial}{\partial s}\log P_s(A)=
-2G_0(\alpha,\beta,\nu) s +\frac{\partial}{\partial s} \log\theta(sV(\alpha,\beta,\nu);\tau(\alpha,\beta,\nu))+O((s\nu^{2|<sV>|+\ep})^{-1}),
\ee
where $V,\tau,G_0$ are defined in equations (\ref{Vtau11}), (\ref{G010}) above with 
$\nu=\alpha_2=-\beta_1$, $\alpha=\alpha_1$, $\beta=\beta_2$; $\gamma=(\beta^{-1}-\alpha^{-1})/8$, and $<x>=x-k$ ($-1/2< <x> \le 1/2$) with $k$ the closest integer to $x$.
\end{Lemma}

\begin{proof}

Consider the setup of \cite{DIZ} for 2 gaps $(\alpha,-\nu)\cup(\nu,\beta)$.
In the notation of \cite{DIZ}, $\alpha=a_0$, $\nu=-b_0=a_1$, $\beta=b_1$, $s=x$. 
We now verify that, if $\nu$ tends to zero at a sufficiently slow rate with $s\to\infty$, the jump matrices of the $R$ matrix in the Deift-Its-Zhou RH problem remain uniformly close to the identity, and therefore the analysis of \cite{DIZ} is extendable into that region.
We encircle the end-points of the gaps by nonintersecting discs. Note that the discs around $\nu$, $-\nu$ will have to contract
as $\nu$ tends to zero, we choose their radia to be $\nu/3$. For the matching of the local parametrices and the global one on the boundaries of the discs, we need, in particular, the parameter (see  (4.100), (4.102), etc in \cite{DIZ}) 
\be\label{rho}
\rho(z)=s\Omega^{(0)}(z)=s \int_{-\nu}^z \frac{q(t)}{\sqrt{p(t)}}dt,
\ee
to be uniformly large in absolute value on the boundary of the disc around $-\nu$. Here $p(z)=(z-\alpha)(z^2-\nu^2)(z-\beta)$,
$q(z)=z^2+q_1z+q_0$, where $q_1=-(\alpha+\beta)/2$, and the value of the constant $q_0$ is determined by the equation
\be\label{defq0}
0=\int_{\nu}^{\beta} \frac{q(t)}{\sqrt{p(t)}}dt=
\int_{\nu}^{\beta} \frac{t^2+q_1 t}{\sqrt{p(t)}}dt+q_0 \int_{\nu}^{\beta} \frac{dt}{\sqrt{p(t)}}
\ee
To analize the integrals in the limit $\nu\to\ 0$, we split the interval $(\nu,\beta)=(\nu,\sqrt{\nu}]\cup [\sqrt{\nu},\beta)$,
and change the integration variable $y=t/\sqrt{\nu}$ in the integration over the first one. We then obtain
\begin{align}
\int_{\nu}^{\beta} \frac{t^2}{\sqrt{|p(t)|}}dt&=\sqrt{|\alpha|\beta}+(\beta-|\alpha|)\arctan\sqrt{\frac{\beta}{|\alpha|}}+
\frac{\nu^2\log \nu^{-1}}{2\sqrt{|\alpha|\beta}}+O(\nu^2)\\
\int_{\nu}^{\beta} \frac{t}{\sqrt{|p(t)|}}dt&=2\arctan\sqrt{\frac{\beta}{|\alpha|}}-\frac{\beta-|\alpha|}{8(|\alpha|\beta)^{3/2}}\nu^2\log \nu^{-1}
+O(\nu^2)\\
\int_{\nu}^{\beta} \frac{1}{\sqrt{|p(t)|}}dt&=\frac{1}{\sqrt{|\alpha|\beta}}\left(
\log(\gamma\nu)^{-1}+\frac{1}{16}\left\{\frac{3}{\alpha^2}+\frac{3}{\beta^2}-\frac{2}{|\alpha|\beta}\right\}\nu^2\log \nu^{-1}\right)
+O(\nu^2),
\end{align}
where $\gamma=(\beta^{-1}+|\alpha|^{-1})/8$.
And therefore, (\ref{defq0}) gives
\be\label{q0}
q_0=-\frac{|\alpha|\beta}{\log(\gamma \nu)^{-1}}(1+O(\nu^2\log \nu^{-1})),\qquad \nu\to 0.
\ee
Substituting this expansion into (\ref{rho}), we obtain that
\be\label{ineqsec5}
|\rho(z)|\ge c\frac{s}{\log(\gamma \nu)^{-1}},\qquad c>0,
\ee
on the boundary of the disc around $-\nu$ ($c$ is independent of $z$, $\nu$, $s$). Similarly, we carry out the analysis around the other end-points of the gaps and obtain that the inequality (\ref{ineqsec5}) holds for the relevant quantities on the boundaries of all the disc around the end-points of the intervals.
In the notation of \cite{DIZ}, this means that
\be\label{vps}
v_{p,s}=I+O\left(\frac{\log(\gamma \nu)^{-1}}{s}\right)
\ee
uniformly on the boundaries of the discs. 

To prove the lemma, we need to verify that the jump matrices $v_{R,s}(z)$ in (4.123) in \cite{DIZ} have the form
$v_{R,s}(z)=I+o(1)$ on the jump contour Figure 4.122 in \cite{DIZ} in the asymptotic regime of the lemma.
First, on the boundaries of the discs (see Figure 4.122 in \cite{DIZ}),
\be
v_{R,s}(z)=f^{\infty}(v_{p,s})^{-1}(f^{\infty})^{-1},
\ee
where (see (3.42) in \cite{DIZ})
\[
f^{\infty}(z)=\begin{pmatrix}
\frac{\theta(u_{\infty}+d)}{\theta(u_{\infty}+sV+d)} & 0\cr
0 & \frac{\theta(u_{\infty}+d)}{\theta(u_{\infty}-sV+d)}
\end{pmatrix}
\begin{pmatrix}
\frac{\theta(u(z)+sV+d)}{\theta(u(z)+d)}\frac{\delta+\delta^{-1}}{2} & 
\frac{\theta(u(z)-sV-d)}{\theta(u(z)-d)}\frac{\delta-\delta^{-1}}{2i}e^{is\Omega_0}
\cr
\frac{\theta(u(z)+sV-d)}{\theta(u(z)-d)}\frac{\delta-\delta^{-1}}{-2i}e^{-is\Omega_0} & 
\frac{\theta(u(z)-sV+d)}{\theta(u(z)+d)}\frac{\delta+\delta^{-1}}{2}
\end{pmatrix}.
\]
Here $\theta(z)=\theta(z;\tau)$ and $V$, $\tau$ are as in (\ref{def theta}) and (\ref{Vtau11}),
\[
\Omega_0=2\alpha+2\int_{\infty}^{\alpha}\left(\frac{q(x)}{\sqrt{p(x)}}-1\right)dx,\qquad 
u(z)=\frac{\int_{\alpha}^z \frac{dt}{\sqrt{p(t)}}}{2\int_{\nu}^{\beta}\frac{dt}{\sqrt{p(t)}}},\qquad
\delta(z)=\left(\frac{(z+\nu)(z-\beta)}{(z-\nu)(z-\alpha)}\right)^{1/4}.
\]
The sheet of the Riemann surface $w=p(z)^{1/2}$ is chosen such 
that $p(z)^{1/2}\to 1$, $z\to\infty$.
The constant $u_{\infty}=u(\infty)$, and $d$ is chosen such that the zero of 
$\theta(u(z)-d)$ coincides
with the zero of $\delta(z)-\delta(z)^{-1}$ (which is inside $(-\nu,\nu)$). Note (see \cite{DIZ}) that $\theta(u(z)-d)$ has no other zeros, and  $\theta(u(z)+d)$
has no zeros. Thus $f^{\infty}(z)$ is analytic outside $A$ and clearly the limit $f^{\infty}(\infty)=I$. Moreover by standard arguments based on Liouville theorem, 
$\det f^{\infty}(z)=1$ for all $z$. Furthermore \cite{DIZ}, $u_{\infty}+d \equiv 0$ modulo the lattice
$m+n\tau$, $m,n\in\bbz$.

In the limit $\nu\to 0$, we have the expansions
\begin{align}
V&=\frac{1}{\pi}\int_{-\nu}^{\nu}\frac{q(t)}{\sqrt{|p(t)|}}dt=-
\sqrt{|\alpha|\beta}\left(\frac{1}{\log(\gamma\nu)^{-1}}-\frac{(\alpha+\beta)^2}{16\alpha^2\beta^2}\nu^2\right) +O(\nu^2/\log\nu^{-1}),\\
\tau&=\frac{i\pi}{\log(\gamma\nu)^{-1}}(1+O(\nu^2/\log\nu^{-1})),
\end{align}
and therefore
\be
\kappa=e^{-i\pi/\tau}=(\gamma\nu)^{1+O(\nu^2/\log\nu^{-1})}.
\ee
Note that, using the inversion formula ($\tau\to 1/\tau$) for the theta-functions, we can write
\be
\theta(z)=\frac{1}{\sqrt{-i\tau}}\sum_k e^{-\frac{i\pi}{\tau}(k-z)^2}=
\frac{\kappa^{<z>^2}}{\sqrt{-i\tau}}(1+O(\kappa^{1-2|<z>|})),
\ee
where
\[
z=j+<z>,\qquad -1/2< <\Re z> \le 1/2,\quad j\in\bbz.
\]

We can now esimate the matrix elements of $f^{\infty}$ on the boundaries of the discs.
On the boundary of the disc around $-\nu$, we have $u(z)=-1/2+r(z)$, where
$|r(z)|<\ep$ with a suitable $\ep>0$. Recalling periodicity properties of the theta-function,
$\theta(z+n\pm\tau)=e^{\mp 2\pi i z-i\pi\tau}\theta(z)$, we write for some $C>0$
uniformly on the boundary
\[\begin{aligned}
&\left|\frac{\theta(u_{\infty}+d)}{\theta(u_{\infty}+sV+d)}
\frac{\theta(u(z)+sV+d)}{\theta(u(z)+d)}\frac{\delta(z)+\delta(z)^{-1}}{2}\right|\\
&<C\left|\frac{\theta(0)}{\theta(sV)}
\frac{\theta(1/2+r(z)+sV+d)}{\theta(1/2+r(z)+d)}\right|
=O(\nu^{-(1+2\ep)|<sV>|}).  
\end{aligned}
\]
Similar estimate holds for the other elements of $f^{\infty}$ (we replace
$\delta-\delta^{-1}$ and $\theta(u(z)-d)$ in the off-diagonal elements with their derivatives 
at their zero). In the same vein, using the behaviour of $u(z)$, one obtaines similar estimates on the discs around the other end-points. (Note that, e.g., at $\alpha$, we can assume 
$|u(z)|<\ep$).
Recalling (\ref{vps}) we thus conlude that 
uniformly on these boundaries
\be\label{vRsin}
v_{R,s}=I+f^{\infty}O\left(\frac{\log(\gamma \nu)^{-1}}{s}\right)(f^{\infty})^{-1}=
I+O\left(\frac{\log(\gamma \nu)^{-1}}{s}(\nu^{-(1+2\ep)|<sV>|})^2\right).
\ee
Adjusting $\ep$, we can write this estimate as $v_{R,s}=I+O((s\nu^{2|<sV>|+\ep})^{-1})$.
The error term here is not small at the point $<sV>=1/2$, and we analyse the case 
of $|<sV>|$ close to $1/2$ separately below. Assume for now that $|<sV>|\le 1/4$. Then (\ref{vRsin}) is the estimate we need to prove the lemma in this case. It remains, however, to obtain the same, or better, estimate
for $v_{R,s}$ on the intervals outside the discs, where (Figure 4.122 in \cite{DIZ}),
\be\label{vRsout}
v_{R,s}=f_+^{\infty}\begin{pmatrix}
1 & -2e^{2isg_+(z)}\cr
0 & 1
\end{pmatrix}
(f_+^{\infty})^{-1},\qquad g(z)=z+\int_{\infty}^z\left(\frac{q(x)}{\sqrt{p(x)}}-1\right)dx.
\ee
Since by definition of $q(z)$ ((1.17) in \cite{DIZ} or (\ref{q9}) in the introduction),
\[
0=\int_{\nu}^{\beta} \frac{q(t)}{\sqrt{p(t)}}dt=\int_{\alpha}^{-\nu} \frac{q(t)}{\sqrt{p(t)}}dt,
\]
the estimation of $g_+(z)$ is similar to that of $\Omega^{(0)}(z)$ above, and we obtain that $\Re(ig_+(z))<0$ and
\[
-\Re(ig_+(z))\ge \frac{C}{\log(\gamma\nu)^{-1}},\qquad C>0,
\]
on the intervals outside the discs, and so
\[
e^{2isg_+(z)}=O\left(e^{-\frac{cs}{\log(\gamma\nu)^{-1}}}\right),
\]
with some constant $c>0$ independent of $s,\nu,z$.
Substituting this into (\ref{vRsout}), we obtain as above,
\be
v_{R,s}=I+O\left(\nu^{-1}e^{-\frac{cs}{\log(\gamma\nu)^{-1}}}\right)
\ee
uniformly on the intervals outside the discs.
Combining this result with (\ref{vRsin}), we see that the estimate
\[
v_{R,s}=I+O((s\nu^{2|<sV>|+\ep})^{-1})
\]
holds uniformly on the whole contour for $R$ in the asymptotic regime of the lemma, and therefore the lemma is proved in the case $|<sV>|\le 1/4$ by the arguments of \cite{DIZ}.

Now consider the remaining case $1/4<|<sV>|\le 1/2$. Let 
\[
t=<sV>+k/2,
\]
where $k=\pm 1$ is chosen so that $t\in(-1/4,1/4)$. Consider the following function which solves the same jump conditions (given in \cite{DIZ}) as $f^{\infty}$
\[
\wt  f^{\infty}=\frac{1}{\gamma(z_-)}\begin{pmatrix}
\frac{\theta(u(z_-)+d')}{\theta(u(z_-)+t+d')} & 0\cr
0 & \frac{\theta(u(z_-)+d')}{\theta(u(z_-)-t+d')}
\end{pmatrix}
\begin{pmatrix}
\frac{\theta(u(z)+t+d')}{\theta(u(z)+d')}\frac{\gamma+\gamma^{-1}}{2} & 
\frac{\theta(u(z)-t-d')}{\theta(u(z)-d')}\frac{\gamma-\gamma^{-1}}{2i}e^{is\Omega_0}
\cr
\frac{\theta(u(z)+t-d')}{\theta(u(z)-d')}\frac{\gamma-\gamma^{-1}}{-2i}e^{-is\Omega_0} & 
\frac{\theta(u(z)-t+d')}{\theta(u(z)+d')}\frac{\gamma+\gamma^{-1}}{2}
\end{pmatrix}.
\]
Here 
\[
\gamma(z)=\nu^{-1/4}\left(\frac{(z+\nu)(z-\nu)}{(z-\alpha)(z-\beta)}\right)^{1/4}.
\]
The sheet of the Riemann surface $w=p(z)^{1/2}$ is chosen as before such 
that $p(z)^{1/2}\to 1$, $z\to\infty$.
It is easy to verify that $\gamma(z)-\gamma(z)^{-1}$ has 2 zeros $z_+$, $z_-$.
As $\nu\to 0$, $z_{\pm}=\pm\sqrt{\nu|\alpha\beta|}(1+o(1))$.
The constant $d'$ is chosen such that the zero of 
$\theta(u(z)-d')$ coincides
with the zero $z_+$ of $\gamma(z)-\gamma(z)^{-1}$. As in \cite{DIZ}, Abel theorem then shows that $u(z_-)+d'\equiv 1/2$ modulo the lattice.
Furthermore, $\theta(u(z)-d')$ has no other zeros, and  $\theta(u(z)+d')$
has no zeros. Thus $\wt f^{\infty}(z)$ is analytic outside $A$ and the limit 
$\wt f^{\infty}(z_-)=I$. It follows by standard arguments that $\det\wt f^{\infty}(z)=1$ for all $z$. We also note that the limit 
\[
\Lambda=\wt f^{\infty}(\infty)
\]
has $\det\Lambda=1$ but is not the identity as before. By standard uniqueness arguments
\be\label{ff}
f^{\infty}(z)=\Lambda^{-1}\wt f^{\infty}(z).
\ee

The construction of local parametrices $\wt f_p$ around the edge points is similar to 
that in \cite{DIZ}. The definition of the new $R$-matrix is now as follows:
$R(z)=\Lambda f(z)\wt f_p^{-1}(z)$ in the discs around the end-points and 
$R(z)=\Lambda f(z)(\wt f^{\infty}(z))^{-1}$ outside. 
The jump matrices for $R$ at the boundaries of the discs
have the same form as before
\[
v_{R,s}=I+\wt f^{\infty}O\left(\frac{\log(\gamma \nu)^{-1}}{s}\right)(\wt f^{\infty})^{-1},
\] 
and a similar (to the one above) examination of the order of $\wt f^{\infty}$ 
on the boundaries shows that uniformly
\be\label{vrsnew}
v_{R,s}=I+O\left(\frac{1}{s\nu^{1/2+\ep}}\right),\qquad |t|<1/4.
\ee
As before, a better estimate holds on the rest of the jump contour of $R$.
Thus the asymptotics obtained holds in the regime $s\nu^{1/2+\ep}\to\infty$, for 
$|t|<1/4$. To finish the proof of the lemma it only remains to verify
(\ref{Lemma-form}) for $|t|< 1/4$. By Equation (3.9) in \cite{DIZ},
\[
\frac{\partial}{\partial s}\log P_s(A)=
-2G_0(\alpha,\beta,\nu) s +i(f_{1,22}-f_{1,11}),
\]
where $f_1$ is the coefficient in the large $z$ expansion $f(z)=I+f_1/z+O(1/z^2)$.
(Below, we also use $f^{\infty}(z)=I+f^{\infty}_1/z+O(1/z^2)$, $\wt f^{\infty}(z)=\Lambda+\wt f^{\infty}_1/z+O(1/z^2)$.)
By our definition of $R$,
\[
f(z)=\Lambda^{-1}R(z)\wt f^{\infty}(z)=
\Lambda^{-1}\left(I+\frac{R_1}{z}+O\left(\frac{1}{z^2}\right)\right)
\left(\Lambda+\frac{\wt f^{\infty}_1}{z}+O\left(\frac{1}{z^2}\right)\right)
\]
and therefore, using also (\ref{ff}),
\[
f_1=\Lambda^{-1}\wt f^{\infty}_1+\Lambda^{-1}R_1\Lambda=
f^{\infty}_1+\Lambda^{-1}R_1\Lambda.
\]
We have $\Lambda=O(\nu^{-1/4+|t|})$, and since $R_1$ has the same order as the error term in
(\ref{vrsnew}), $\Lambda^{-1}R_1\Lambda=O((s\nu^{1-2|t|+\ep})^{-1})=
O((s\nu^{2|<sV>|+\ep})^{-1})$.

Thus 
\[
\frac{\partial}{\partial s}\log P_s(A)=
-2G_0(\alpha,\beta,\nu) s+i(f^{\infty}_{1,22}-f^{\infty}_{1,11})+
O((s\nu^{2|<sV>|+\ep})^{-1}),
\]
But it was shown in \cite{DIZ} (Equation (3.48)) that 
$i(f^{\infty}_{1,22}-f^{\infty}_{1,11})=
\frac{\partial}{\partial s} \log\theta(sV;\tau)$, and we again obtain
(\ref{Lemma-form}) now for $1/4<|<sV>|\le 1/2$. The lemma is proved.

\end{proof}

\begin{remark}\label{Remark52}
In the overlap region $(0,\nu_0)\cap(\nu_1,\nu_2)$ of the asymptotics of Theorem \ref{Theorem u0s to 0} and the lemma, 
we can explicitly see, as an exercise, the coincidence of the main terms. 
Indeed, from (\ref{G010}), with $\alpha=\alpha_1$, $\beta=\beta_2$, $\nu=-\beta_1=\alpha_2$,
\[
G_0=q_0+\frac{1}{8}(\beta-\alpha)^2+\nu^2/2.
\]
Substituting here the expansion (\ref{q0}), we obtain
\[
G_0=\frac{1}{8}(\beta-\alpha)^2-\frac{|\alpha\beta|}{\log(\gamma \nu)^{-1}}+ O(\nu^2).
\]
Since $s\nu\to 0$, we see that this gives exactly the main (order $s^2$) term in (\ref{equation 2}). 
\end{remark}

\begin{remark}
Integration of the asymptotics of the lemma is related to the determination of the constant $c_1$ in (\ref{2 gap formula int})
which will be addressed in a future publication.
\end{remark}

\section*{Acknowledgements}
We are grateful to Tom Claeys for useful discussions and suggestions. 
The work of I.K. was partially supported by the Leverhulme Trust research fellowship
RF-2015-243.

\section*{Appendix}
We include a proof of the well-known formula \eqref{Fredholm Toeplitz}, using arguments from \cite{Deift}. 
As mentioned in the introduction, the gap probability of $m$ gaps in the bulk scaling limit is given by the sine-kernel Fredholm determinant \eqref {def Ps} for a wide class of random matrix ensembles.
A particular such ensemble is the Circular Unitary Ensemble (CUE), which is the group of $n\times n$ unitary matrices equiped with the Haar measure. The Haar measure induces a probability measure $p_n(\theta) d^n\theta$ on the eigenvalues of the matrix given by 
\begin{equation} \label{measure CUE eigs}
p_n(\theta)=\frac{1}{n!}\left(\frac{1}{2\pi}\right)^{n} \prod_{1\leq j<k\leq n}|e^{i\theta_j}-e^{i\theta_k}|^2, \quad \theta=(\theta_j)_{j=1}^n \in [0,2\pi)^n.
\end{equation}
From Heine's identity and \eqref{measure CUE eigs}, it follows that the probability that there are no eigenvalues on a set $\Sigma\subset C$, where $C$ is the unit circle, is given by the following
\begin{equation} \label{Toeplitz eigs}
D_n\left(J=C\setminus \Sigma\right)= \int_{e^{i\theta_j}\in J}p_n(\theta)d^n\theta,
\end{equation}
where $D_n(J)$ was defined in \eqref{T}.
 Denote  $J^{(n)}=C\setminus \Sigma^{(n)}$ where 
\begin{equation} \label{notation Sigman}
 \Sigma^{(n)}=\left\{ z\in : \arg z \in\left(\frac{2s\alpha}{n}, -\frac{2s\nu}{n}\right)\bigcup \left( \frac{2s\nu}{n}, \frac{2s \beta}{n}\right)\right\}.\end{equation}

Using the definition \eqref{measure CUE eigs} it is easily seen that
\begin{equation}
p_n(\theta)=\frac{1}{n!} 
\det \left( \widetilde{K}_n(\theta_j,\theta_k)\right)_{j,k=1}^n ,
\end{equation}
where $\widetilde K_n(x,y)=\frac{1}{2\pi}\sum_{j=0}^{n-1}e^{ji(x-y)}$. Let
\begin{equation}
K_n(x,y) =e^{-i\frac{n-1}{2}(x-y)}\widetilde{K}_n(x,y)=\frac{1}{2\pi}\frac{\sin \frac{n}{2} (x-y)}{\sin \frac{1}{2}(x-y)}.
\end{equation}
It follows that
\begin{equation}\label{pn and Kn}
p_n(\theta)=\frac{1}{n!}   \det\left( {K}_n(\theta_j,\theta_k)\right) .
\end{equation}
The kernel $K_n$ has the reproducing kernel property, meaning that for $r=1, \dots,n$
\begin{equation}\label{reproducing}
\int \det (K_n(\theta_j,\theta_k))_{j,k=1}^{n}d\theta_{n-r+1}\dots d\theta_n
=r! \det (K_n(\theta_j,\theta_k))_{j,k=1}^{n-r},
\end{equation}
where
\begin{equation}
\det (K_n(\theta_j,\theta_k))_{j,k=1}^0\equiv 1.
\end{equation}
From \eqref{Toeplitz eigs}, we see that
\begin{equation} \label{inclusion exlusion}\begin{aligned}
D_n(J)&= 
\int_{\theta\in(0,2\pi)^n} \prod_{j=1}^n (1-\chi_\Sigma(\theta_j))p_n(\theta)d^n\theta=\int_{\theta\in(0,2\pi)^n} p_n(\theta)d^n\theta
\\
&-n\int_{\theta\in(0,2\pi)^n} p_n(\theta) \chi_\Sigma(\theta_1)d^n\theta+\binom{n}{2} \int_{\theta\in(0,2\pi)^n} p_n(\theta)\chi_\Sigma(\theta_1)\chi_\Sigma(\theta_2)d^n\theta\\&+\dots +(-1)^n\binom{n}{n}\int_{\theta\in(0,2\pi)^n} p_n(\theta)\prod_{j=1}^n\chi_\Sigma(\theta_j)d^n\theta.
\end{aligned}\end{equation}
The Fredholm determinant of a trace-class operator $K$ acting on a set $S$ can be represented as
\begin{equation} \label{rep Fredholm}
\det(I-K)_S=1+\sum_{j=1}^{\infty}\frac{(-1)^j}{j!} \int_S \det(K(\theta_i,\theta_k))_{i,k=1}^jd^j\theta.
\end{equation}
For bounded $S$ and $K$, one may verify that the sum indeed converges using Hadamard's inequality. Let $J^{(n)}$ be given by \eqref{def J1J2}, and $\Sigma^{(n)}=C\setminus J^{(n)}$ be the complement. Recall $A$ from \eqref{notation A}.
Noting \eqref{pn and Kn}, we apply \eqref{reproducing} to \eqref{inclusion exlusion} to find that 
\begin{equation} \label{Toep Fred}
D_n(J^{(n)})=\det(I-K_n)_{\Sigma^{(n)}}=\det\left(I-\widehat K_n\right)_{A},
\end{equation}
 where
\begin{equation}
\widehat K_n(x,y)=\frac{s\sin s(x-y)}{\pi n\sin\frac{s(x-y)}{n}}.
\end{equation}
For fixed $s$, as $n\to \infty$, we have 
\begin{equation}\label{Kn-Ks} \left|\widehat K_n(x,y)-K_s(x,y)\right|=\mathcal O(1/n).
\end{equation}
Since the sum \eqref{rep Fredholm} converges, 
\begin{equation}
\sum_{j=M}^{\infty}\frac{(-1)^j}{j!} \int_A \det(K(\theta_i,\theta_k))_{i,k=1}^jd^j\theta\to 0
\end{equation} as $M\to \infty$, for $K=\widehat K_n,K_s$, where $s$ remains fixed and uniformly for $n>N$ for some $N$. From \eqref{Kn-Ks}, it follows that for fixed but arbitrarily large $M$,
\begin{equation}\left|\sum_{j=1}^{M}\frac{(-1)^j}{j!} \int_A \det(\widehat K_n(\theta_i,\theta_k))_{i,k=1}^jd^j\theta-
\sum_{j=1}^{M}\frac{(-1)^j}{j!} \int_A \det(K_s(\theta_i,\theta_k))_{i,k=1}^jd^j\theta
\right|=\mathcal O(1/n)
\end{equation}
as $n\to \infty$.
 Thus it follows that 
 \begin{equation}
 \left|D_n(J^{(n)})-\det(I-K_s)_A\right|\to 0
 \end{equation}
 as $n \to \infty$ and $s$ remains fixed.

\end{document}